\def\ps@pprintTitle{%
 \let\@oddhead\@empty
 \let\@evenhead\@empty
 \def\@oddfoot{\centerline{\thepage}}%
 \let\@evenfoot\@oddfoot}
\definecolor{lightgray}{gray}{0.8}
\newenvironment{assumption2}[1]
{
{\bf Assumption #1.} 
\it 
}
{

 } 
 \newtheorem{thm}{Theorem}
\newtheorem{lem}[thm]{Lemma}
\newdefinition{rmk}{Remark}
\newproof{pf}{Proof}
\newproof{pot}{Proof of Theorem \ref{thm2}}
 \newcommand{\set}[1]{\left\{#1\right\}}
\newcommand{\norm}[1]{\left\|\,#1\,\right\|}
  \newcommand{\wh}[1]{\widehat{#1}}
\newcommand{\yt}{\tilde{y}_n}  
\newcommand{\rh}{\widehat{r}} 
  \newcommand{\Jt}{\tilde{J}_n}  
  \newcommand{\nt}{\tilde{N}_n}
\renewcommand{\O}{\mathcal{O}}
\newcommand{\X}{\mathscr{X}}
\newcommand{\R}{\mathbb{R}}
\renewcommand{\P}{\mathbb{P}}
\newcommand{\N}{\mathbb{N}}
\renewcommand{\yt}{\tilde{u}_n}  
\renewcommand{\rh}{\widehat{r}} 
\newcommand{\ut}{\tilde{u}_n}  
\newcommand{\Uh}{\widehat{U}}
\numberwithin{equation}{section}
\begin{document}
    \begin{frontmatter}
        \title{A new approach to constructing efficient stiffly accurate EPIRK methods}
        \author[ucm]{G. Rainwater\corref{cor1}}
        \ead{grainwater@ucmerced.edu}

        \author[ucm]{M. Tokman}
        \ead{mtokman@ucmerced.edu}

        \cortext[cor1]{Corresponding author.  Tel.: +1 925 209 3021; fax: +1 209 228 4060.}
        \address[ucm]{School of Natural Sciences, University of California, 5200 N. Lake Road, Merced, CA 95343}

        \begin{abstract}
                 The structural flexibility of the exponential propagation iterative methods of Runge-Kutta type (EPIRK) enables construction of particularly efficient exponential time integrators.  While the EPIRK methods have been shown to perform well on stiff problems, all of the schemes proposed up to now have been derived using classical order conditions. In this paper we extend the stiff order conditions and the convergence theory developed for the exponential Rosenbrock methods to the EPIRK integrators. We derive stiff order conditions for the EPIRK methods and develop algorithms to solve them to obtain specific schemes.  Moreover, we propose a new approach to constructing particularly efficient EPIRK integrators that are optimized to work with an adaptive Krylov algorithm.  We use a set of numerical examples to illustrate the computational advantages that the newly constructed EPIRK methods offer compared to previously proposed exponential integrators.

        \end{abstract}

        \begin{keyword}
            Exponential integrators; Krylov projections; adaptive Krylov algorithm; EPIRK methods; Stiff order conditions; 
        \end{keyword}
    \end{frontmatter}

\section{Introduction}   \label{sec:introduction}
Stiff systems of differential equations of the form 		
\begin{equation}
\label{eqn:ivp}
u'(t)=f(u(t)),\qquad u(t_0)=u_0, \quad u(t) \in \mathbf{R}^N
\end{equation}
are routinely encountered in a wide variety of scientific and engineering applications.  Obtaining the numerical solution to this problem over a long time interval compared to the fastest scales in the system is a challenging task that has been traditionally addressed with the use of implicit time integrators. The implicit methods have better stability properties compared to explicit techniques and thus allow for numerical integration of (\ref{eqn:ivp}) with larger time steps. However, while an implicit method can outperform an explicit scheme, it too is affected by the stiffness of the problem which manifests itself in the solution of the implicit equations at each time step.  A general stiff system of type (\ref{eqn:ivp}) is typically solved with an implicit method that has a Newton iteration embedded within each time step.  Each Newton iteration in turn requires approximation of a product of a rational function of the Jacobian with a vector $(I - cf'(u))^{-1}v$ where $c$ is a constant, $I$ is an $N \times N$ matrix and $u$ and $v$ are $N$-dimensional vectors.  For a general stiff matrix $f'(u)$  a method of choice to approximate $(I - cf'(u))^{-1}v$  is usually a Krylov-projection based algorithm such as GMRES.  Stiffness of the matrix $f'(u)$ results in the slow convergence of any Krylov-projection-type algorithm. Developing an efficient preconditioner is often essential to making an implicit Newton-Krylov time integrator sufficiently fast.  However, construction of such preconditioner can often be a difficult and even impossible task. Consequently development of more efficient time integrators becomes an important problem in numerical analysis. 

Exponential integrators received attention over the past decade as an alternative to implicit methods for stiff systems of type \eqref{eqn:ivp}.  Just like implicit methods, exponential integrators possess good stability properties but they require evaluation of exponential-like, rather than rational, matrix function-vector products. Using a Krylov-projection based method to evaluate an exponential-like function can save significant amount of computational time compared to the rational function evaluation needed for an implicit integrator.   Exponential propagation iterative methods of Runge-Kutta-type (EPIRK) framework has been introduced to enable construction of particularly efficient exponential methods.  The general formulation of the EPIRK methods is 
\begin{equation}
\label{eqn:genEPIRK}
  \begin{aligned}
U_{ni}&= u_n+\alpha_{i1}\psi_{i1}(g_{i1}h_nA_{i1})h_nf(u_n)+h_n\sum_{j=2}^{i-1} \alpha_{ij}\psi_{ij}(g_{ij}h_nA_{ij})\Delta^{(j-1)}r(u_n),\quad i=2,\dots,s,\\
u_{n+1} &= u_n +\beta_{1}\psi_{s+1 1}(g_{s+1 1}h_nA_{i1})h_nf(u_n)+h_n\sum_{j=2}^s \beta_j\psi_{s+1j}(g_{s+1j}h_nA_{ij})\Delta^{(j-1)}r(u_n)\;\;\
  \end{aligned}
\end{equation}
where  $h_n=t_{n+1}-t_n$ is the time step and $\Delta^{(k)} $ denotes the $k$th forward difference vector. As described in \cite{tokmanOberwolfach} each matrix $A_{ij}$ can be either a full Jacobian $J_n=f'(u_n)$ or a part of the full Jacobian if the operator $f(u)$ can be partitioned in some meaningful way.  For example, we can set $A_{ij}=L$ when the right-hand-side operator in \eqref{eqn:ivp} can be partitioned as $f(u) = Lu + N(u)$ with stiffness contained in the linear portion $L$.  Function $r(u)$ can either be $r(u)=f(u)-f(u_n)-f'(u_n)(u-u_n)$ or $r(u) = N(u)$ for the partitioned operator $f(u)$.  To obtain a fully exponential EPIRK integrator, functions $\psi_{ij}(z)$ are chosen to be linear combinations of exponential-like functions
\begin{equation}
\label{eqn:psidef}
\psi_{ij}(z) = \sum_{k=1}^{K}p_{ijk}\varphi_{k}(z), \quad \varphi_k(z) = \int_0^1 e^{z(1-\theta)}\frac{\theta^{k-1}}{(k-1)!}d\theta.
\end{equation}
It is also possible to choose some of the these functions to be rational $\psi_{ij}(z)=1/(1-z)$ to derive implicit-exponential integrator \cite{RainwaterTokman, tokmanOberwolfach} which can be used in cases when an efficient preconditioner is available for the full Jacobian $J_n$ or its stiff part $L$.  The main advantages of the EPIRK framework \eqref{eqn:genEPIRK} are the flexibility of the choices for $A_{ij}$ and $\psi_{ij}(z)$ and the degrees of freedom in constructing particular integrators represented by coefficients $\alpha_{ij}$, $\beta_{ij}$ and $g_{ij}$.  In particular, as shown in \cite{tokmanadapt, RainwaterTokman} optimizing coefficients $g_{ij}$ shrinks the spectrum of the corresponding matrix $A_{ij}$ and therefore results in significant computational savings by speeding up convergence of the Krylov projection algorithm to approximate $\psi_{ij}(h_ng_{ij}A_{ij})v$.  A number of numerical studies showed that the EPIRK methods performed well on stiff problems \cite{Tokman2006, Tokman, RainwaterTokman}. However, the derivation of the EPIRK schemes and the general convergence theory were based on classical rather than stiff order conditions in previous publications.  Methods constructed using stiff order conditions are a subset of classically accurate schemes that do not suffer from order reduction for certain classes of problems.  In this paper we demonstrate that the stiff order conditions and convergence theory developed in \cite{Hoch,luanOstermann_ExpBseries, luanOstermann_stiffRK} can be extended to the EPIRK methods. We derive stiff order conditions for the EPIRK methods of nonsplit (or unpartitioned) type, i.e. the most general version of the EPIRK integrators with $A_{ij} = J_n$ and $r(u) = f(u)-f(u_n) - f'(u_n)(u-u_n)$. We present a systematic way to solve the resulting stiff order conditions and show how the flexibility of the EPIRK framework can be utilized to construct particularly efficient schemes. 

 The paper is organized as follows.  Section \ref{sec:stifftheory}  describes how the stiff order conditions and the convergence theory from \cite{luan_oster} can be extended to include the EPIRK methods.  This section also includes an explanation of the differences between the EPIRK framework and the exponential Rosenbrock methods for which the theory was originally developed.  In Section \ref{sec:construction}  we propose a new optimization approach and procedure to solve the stiff order conditions for EPIRK methods to derive a range of efficient fourth- and fifth-order schemes.  In particular, we construct EPIRK methods that can be particularly efficient when used together with an adaptive Krylov-projection algorithm, currently the most general and efficient way to  estimate the exponential matrix function vector products.  Finally, Section \ref{sec:numExper} contains numerical tests that validate the performance of the newly derived methods and demonstrate the relative efficiency of these techniques compared to previously proposed schemes.

%
%
%
%
\section{Stiffly accurate EPIRK methods} 
   \label{sec:stifftheory}

\subsection{EPIRK and exponential Rosenbrock methods} In this paper we focus on the nonsplit, or unpartitioned,  \cite{Tokman, RainwaterTokman} EPIRK schemes for the general problem \eqref{eqn:ivp}. The unpartitioned EPIRK methods are constructed from \eqref{eqn:psidef} by setting $A_{ij} = J_n$ to obtain:
\begin{equation}
\label{eqn:nonsplitEPIRK}
  \begin{aligned}
U_{ni}&= u_0+\alpha_{i1}\psi_{i1}(g_{i1}h_nJ_n)h_nf(u_n)+h_n\sum_{j=2}^{i-1} \alpha_{ij}\psi_{ij}(g_{ij}h_nJ_n)\Delta^{(j-1)}r(u_n),\quad i=2,\dots,s,\\
u_{n+1} &= u_n +\beta_{1} \psi_{s+1 \,1}(g_{s+1 1}h_nJ_n)h_nf(u_n)+h_n\sum_{j=2}^s \beta_j\psi_{s+1\,j}(g_{s+1,j}h_nJ_n)\Delta^{(j-1)}r(u_n)\;\;\
  \end{aligned}
\end{equation}
where $r(u)=f(u)-f(u_n)-J_n(u-u_n)$.  Classical order conditions were derived for EPIRK schemes in \cite{Tokman} and these methods were shown to be efficient for stiff problems \cite{Loffeld, TokmanTranquilli}.  The extension of the theory to stiff order conditions presented below will enable us to construct stiffly accurate EPIRK schemes that can be proved to be convergent even for unbounded operators $J_n$.  

The stiff order conditions and the corresponding convergence theory has been developed in \cite{hochosterexpros, luan_oster,luanOstermann_ExpBseries} for the exponential Rosenbrock methods.  While the original formulation of the exponential Rosenbrock methods was first proposed in \cite{pope}, the full development of this class of integrators, including derivation of the classical and stiff order conditions along with the convergence theory, have not been done until the resurgence of interest in exponential methods over the past several decades \cite{Hoch, hochosterexpros, luan_oster}.  Due to efficiency of implementation and theoretical considerations, in \cite{hochosterexpros} the original formulation of the exponential Rosenbrock methods have been recast in the following form
\begin{equation}
\label{eqn:EXPRBgenform}
 \begin{aligned}
U_{ni}&=u_n+c_ih_n\varphi_1(c_i h_n J_n)f(u_n)+h_n\sum_{j=2}^{i-1}a_{ij}(h_nJ_n)D_{nj}\\
u_{n+1} &= u_n +h_n\varphi_1(h_nJ_n)f(u_n)+h_n\sum_{j=2}^s b_i(h_nJ_n)D_{ni}\;\;\
  \end{aligned}
\end{equation}
where $N_{n}(u)=f(u)-J_n u$, $D_{nj}=N_n(U_{nj})-N_n(u_n)$ and coefficients $a_{ij}(z)$ and $b_{ij}(z)$ are functions comprised of linear combinations $\varphi_k(z)$.  The  structural difference between \eqref{eqn:nonsplitEPIRK} and \eqref{eqn:EXPRBgenform} is in the use of $g_{ij}$ coefficients in \eqref{eqn:nonsplitEPIRK} and in allowing the second term of the right-hand-side in each of the stages to have a more general function $\psi_{i1}(z)$ rather than restricting it to $\psi_{i1}(z)=\varphi_1(z)$ as in \eqref{eqn:EXPRBgenform}. Note that $D_{nj} = r(U_{nj})$.  Any exponential Rosenbrock method can be written in EPIRK form. Any EPIRK method can be written in an extended exponential Rosenbrock form if the differences mentioned above are taken into account.  To make it more straightforward to apply the stiff order conditions theory developed for exponential Rosenbrock methods to EPIRK integrators we re-write \eqref{eqn:nonsplitEPIRK} in the extended exponential Rosenbrock form using the expansion
\begin{equation}\label{eqn:forwardDiffvec}
\Delta^{(n)}r(u_n)=\sum_{i=2}^{n+1} \left( \begin{array}{c} n-1 \\ i-1 \end{array}\right)(-1)^{n-i-2}r(U_{ni})=\sum_{i=2}^{n} \left( \begin{array}{c} n-1 \\ i-1 \end{array}\right)(-1)^{n-i-2}r(U_{ni}),
\end{equation}
and collecting the terms corresponding to each $r(U_{ni})$ in every stage. Then (\ref{eqn:nonsplitEPIRK}) can be expressed as
\begin{equation}
\label{eqn:EPIRK}
  \begin{aligned}
U_{ni}&= u_n+\alpha_{i1}\psi_{i1}(g_{i1}h_nJ_n)h_nf(u_n)+h_n\sum_{j=2}^{i-1} a_{ij}(h_nJ_n)r(U_{nj}),\quad i=2,\dots,s,\\
u_{n+1} &= u_n +\beta_1\psi_{\!s+1 1}(g_{s+1 1}h_nJ_n)h_nf(u_n)+h_n\sum_{j=2}^{s} b_{j}(h_nJ_n)r(U_{nj})\;\;\
 \end{aligned}
\end{equation}
where 
\begin{equation}
\label{eqn:a_coefficients}
a_{ij}(z)=\sum_{k=j}^{i-1}\left( \begin{array}{c} k-1 \\ i-1 \end{array}\right)(-1)^{k-i-2}\alpha_{ik}\psi_{ik}(g_{ik}z)\quad \textrm{and} \quad b_{j}(z)=\sum_{k=j}^{s}\left( \begin{array}{c} k-1 \\ s \end{array}\right)(-1)^{k-s-3}\beta_{k}\psi_{\!s+1 k}(g_{\!s+1 k}z)
\end{equation}
We additionally define $\psi_{i1}(z) = \sum_{k=1}^s p_{i1k}\varphi_k(z)$. We now incorporated the $g_{ij}$ coefficients into the definitions of $a_{ij}(z)$ and $b_{ij}(z)$ and extended the second term of the right-hand-sides in stages to general $\psi_{ij}(z)$ functions.  Later we will show how these generalizations of the exponential Rosenbrock methods to EPIRK framework offer added flexibility that allows for derivation of more efficient methods.  To illustrate this reformulation we consider a simple example of a three-stage EPIRK method 
\begin{equation}
\label{eqn:3stageEPIRKform}
\begin{aligned}
U_{n2}&= u_n+\alpha_{21}\psi_{21}(g_{21}h_nJ_n)h_nf(u_n)\\
U_{n3}&= u_n+\alpha_{31}\psi_{31}(g_{31}h_nJ_n)h_nf(u_n) + h_n\alpha_{32}\psi_{32}(g_{32}h_nJ_n)\underbrace{\Delta r(u_n)}_{r(U_{n2})}\\
u_{n+1} &= u_n + \beta_1\psi_{4 1}(g_{4 1}h_nJ_n)h_nf(u_n)+h_n\beta_2 \psi_{42}(g_{42}h_nJ_n)\underbrace{\Delta r(u_n)}_{r(U_{n2})}+\beta_3\psi_{43}(g_{43}h_nJ_n)\underbrace{\Delta^2 r(u_n)}_{{r(U_{n3})-2r(U_{n2})}}.
  \end{aligned}
\end{equation}
This EPIRK method can be re-written in an extended exponential Rosenbrock way as 
\begin{equation}
\label{eqn:3stageEPIRKinEXPRBform}
  \begin{aligned}
U_{n2}&= u_n+\alpha_{21}\psi_{21}(g_{21}h_nJ_n)h_nf(u_n)\\
U_{n3}&= u_n+\alpha_{31}\psi_{31}(g_{31}h_nJ_n)h_nf(u_n) + h_n\underbrace{\alpha_{32}\psi_{32}(g_{32}h_nJ_n)}_{{\textcolor{black}{a_{32}(h_nJ_n)}}}r(U_{n2})\\
u_{n+1} &= u_n + \beta_1\psi_{4 1}(g_{4 1}h_nJ_n)h_nf(u_n)+h_n\underbrace{\left(\beta_3 \psi_{42}(g_{42}h_nJ_n)-2\beta_3\psi_{43}(g_{43}h_nJ_n)\right)}_{{\textcolor{black}{b_2(h_nJ_n)}}} r(U_{n2}) +h_n\underbrace{\beta_3 \psi_{43}(g_{43}h_nJ_n)}_{{\textcolor{black}{b_3(h_nJ_n)}}} r(U_{n3}).
  \end{aligned}.
  \end{equation}
Due to the close relationship between EPIRK and exponential Rosenbrock methods outlined above, most of the theory from \cite{luan_oster} applies to EPIRK directly.  But this generalization has to be handled with care since additional results are needed to account for the more general form of EPIRK schemes. Below we outline the theory for the reader's convenience and present more detail in places where the differences between EPIRK and exponential Rosenbrock methods result in distinct expressions and, ultimately, modified stiff order conditions. 

\subsection{Analytical framework.} \label{subsec:analyticFramework} As in \cite{luan_oster} the analysis is based on the theory of strongly continuous semigroups in a Banach space $\X$ with norm $\norm{\cdot}$. For the reader's convenience we state the assumptions from \cite{luan_oster} that form the base for the stiff order conditions and the convergence theory; we also outline the main ideas of the theory. For our analysis we consider \eqref{eqn:ivp} written in a linearized form
\begin{equation}
\label{eqn:linearizedForm}
u'(t)=f(u(t))=Lu(t)+N(u(t)),\qquad u(t_0)=u_0,
\end{equation}
with Jacobian 
\begin{equation}
\label{eqn:jacobian}
J=J(u)=\frac{\partial f}{\partial u}(u) = L+\frac{\partial N}{\partial u}(u).
\end{equation}
The following two assumptions are then made about operators $L$ and $N(u)$:

  \begin{assumption2}{1 (\cite{luan_oster})}\label{assump:AgenOfC0semigroup}
  The linear operator $L$ is the generator of a strongly continuous semigroup $e^{tL}$ in $\X$.
  \end{assumption2}

 \begin{assumption2}{2 (\cite{luan_oster})}\label{assump:SolutionAndNonlin}
We assume that (\ref{eqn:ivp}) possesses a sufficiently smooth solution $u:[0,T]\to \X$ with derivatives in $\X$ and that the nonlinearity $N:\X\to \X$ is sufficiently often Fr\'{e}chet differentiable in a strip along the exact solution.
 \end{assumption2}
Given these assumptions it can be shown that the Jacobian $J$ in \eqref{eqn:jacobian} is the generator of a strongly continuous semigroup (\cite{pazy},Chap. 3.1).  This implies that there exist constants $C$ and $\omega$ such that 
\begin{equation}
\label{eqn:expBounded}
\norm{e^{tJ}}_{\X\leftarrow\X}\leq Ce^{\omega t},\quad t\geq 0
\end{equation}
holds uniformly in a neighborhood of the exact solution.	Furthermore, it can be concluded from this result that $\varphi_k(h_nJ)$ and subsequently their linear combinations $a_{ij}(h_nJ)$ \& $b_{i}(h_nJ)$, are bounded operators.  Assumption 2 also implies that the Jacobian (\ref{eqn:jacobian}) satisfies the Lipschitz condition
\begin{equation}
\label{eqn:lipchitz_jac}
\norm{J(u)-J(v)}_{\X\leftarrow \X} = \norm{\frac{\partial N}{\partial u}(u) -\frac{\partial N}{\partial u}(v)}_{\X\leftarrow \X} \leq L\norm{u-v}
\end{equation}
in a neighborhood of the exact solution.

Note that problems with homogeneous or no-flow boundary conditions satisfy Assumptions 1 and 2.  In general, non-homogeneous  boundary conditions (including time-dependent) do not necessarily satisfy Assumption 1. As a simple example, consider a semigroup $T(t)=e^{tA}$ defined over the Banach space $\X=L^2(\R)$ of square-integrable functions.   Assumption 1 requires the semigroup to be strongly continuous. By the Hille-Yoshida theorem the necessary condition for the semigroup to be strongly continuous is that the domain $D(A)$ of the infinitesimal generator of the semigroup $A$ is dense in $\X$.  Thus, it is necessary to restrict the domain $D(A)$ in order to ensure that the semigroup $e^{tA}$ is strongly continuous.  If we assume that the solutions $w(t)$ of $w'(t)=Aw$ belong to a subspace $C_0^\infty(\R) \in \X$ it is possible to prove that the semigroup $e^{tA}$ is strongly continuous \cite{pazy} .  This is not necessarily the case for the subspace of functions $w(t)$ with non-homogeneous boundary conditions.  The analysis quantifying how much order reduction one can expect for the problems with non-homogeneous boundary conditions has been done for standard Runge-Kutta and Rosenbrock methods in \cite{OsterRoche1, OsterRoche2}. Developing similar fractional order convergence theory for exponential methods or developing exponential schemes which do not exhibit order reduction for problems with non-homogeneous boundary conditions are non-trivial tasks which we plan to address in our future research.  In this paper we address this issue by using numerical examples to illustrate how much order reduction one can expect if the boundary conditions are non-homogeneous.


The derivation of the stiff order conditions and the convergence theory for problems which satisfy Assumptions 1 and 2 then proceeds as follows. 
\subsection{Local error and stiff order conditions for EPIRK methods.}  Derivation of the stiff order conditions and proof of convergence require analysis of the local error and construction of expressions for the approximate and exact solutions.  To accommodate the difference between EPIRK and exponential Rosenbrock methods derivation of expressions for the numerical solution in \cite{luan_oster} has to be adjusted.  Thus, we choose to present this derivation in more detail and simply restate other results from \cite{luan_oster} that are used without alternations.  The key idea in the convergence theory is to express the error in terms of operators that are bounded. While the Jacobian operator $J_n$ can potentially be unbounded, expressions involving only derivatives of the solution $u_n^{(k)}$ and/or the nonlinearity $\partial^k N/\partial ^k u$ are bounded given Assumptions 1 and 2.  The following formulas that connect these two groups of operators help make these transitions. Consider linearization of \eqref{eqn:ivp} along the exact solution $\tilde{u}_n=u(t_n)$ to get
		\begin{equation}\label{eqn:linearizeAlongExact}
			u'(t)=\Jt u(t) + \nt (u(t)) 
		\end{equation}
		where
		\begin{equation}
		\Jt=\frac{\partial f}{\partial u}(\ut)=L+\frac{\partial N}{\partial u}(\ut), \quad \nt(u)=f(u)-\Jt u.    \label{eqn:J_tilde_g_tilde}
		\end{equation}
From (\ref{eqn:J_tilde_g_tilde}), we obtain 
		\begin{equation}
			\nt(u)=N(u)-\frac{\partial N}{\partial u}(\ut)u \quad \textrm{and} \quad \frac{\partial \nt}{\partial u}(\ut)=0 \label{eqn:identities_of_g_tilde}.
		\end{equation}
Using these identities along with the repeated differentiation of \eqref{eqn:linearizeAlongExact} we get:
 \begin{equation}
	 	\Jt \yt'=\yt'' \qquad  \& \qquad \Jt\yt''=\yt^{(3)}-\frac{\partial^2 \nt}{\partial u^2}(\yt)(\yt',\yt') \label{eqn:J_mult_identities},
	 \end{equation}	
	 where $\ut^{(k)}$ denotes the $k$th derivative of the exact solution of (\ref{eqn:linearizeAlongExact}). More generally, we have 
	 	\begin{equation}
	\Jt u^{(k)}(t)=u^{(k+1)}(t)-\frac{d^k}{dt^k}\nt(u(t))
	\end{equation}
	 which shows that $\Jt u^{(k)}$ is bounded (due to Assumption 2).  This result and identities (\ref{eqn:identities_of_g_tilde}) are key to deriving the stiff order conditions.

First following the procedure in \cite{luan_oster} we carry out one integration step with \eqref{eqn:EPIRK} with exact solution $\tilde{u}_n$ used for the initial value to express the numerical solution as 
\begin{equation}
		\label{eqn:EPIRKalongExact}
		  \begin{aligned}
		\wh{U}_{ni}&= \ut+\alpha_{i1}\psi_{i1}(g_{i1}h_n\Jt)h_nf(\ut)+h_n\sum_{j=2}^{i-1} a_{ij}(h_n\Jt)\rh_{nj},\quad i=2,\dots,s,\\
		\wh{u}_{n+1} &= \ut +\beta_1 \psi_{s+1 \,1}(g_{s+1 1}h_n\Jt)h_nf(\ut)+h_n\sum_{i=2}^{s} b_{i}(h_n\Jt)\rh_{ni}\;\;\
		  \end{aligned},
		\end{equation}
		with \begin{equation}\label{eqn:rhat}
		\rh_{ni}=\nt(\wh{U}_{ni})-\nt(\ut).
		\end{equation}
We now begin computing the Taylor expansion of (\ref{eqn:EPIRKalongExact}) by first calculating $\rh_{ni}$ as a Taylor series around $\ut$.  Using (\ref{eqn:identities_of_g_tilde}) we obtain 
\begin{equation}
\rh_{ni}=\sum_{q=2}^k \frac{h_n^q}{q!}\frac{\partial^q \nt}{\partial u^q}(\ut)\underbrace{(V_i,\dots,V_i)}_{q \textrm{ times}}+\mathcal{R}_{ki} \label{eqn:Expan_of_r}
\end{equation}
with 
\begin{equation} \label{eqn:V_i}
V_{i}=\frac{1}{h_n}\left(\Uh_{ni}-\ut\right)=\alpha_{i1}\psi_{i1}(g_{i1}h_n \Jt)+\sum_{j=2}^{i-1}a_{ij}(h_n \Jt)\rh_{nj} 
\end{equation}
and the remainder 
$$\mathcal{R}_{ki}=h_n^{k+1}\int_0^1 \frac{(1-\theta)^k}{k!}\frac{\partial^{k+1}\nt }{\partial u^{k+1}}(\ut +\theta h_n V_i)(\underbrace{V_i,\dots,V_i}_{k+1 \textrm{ times}})d\theta$$
which is bounded and of order $\mathcal{R}_{ki}=\O(h_n^{k+1})$ by Assumptions 1 and 2.  Note the expression \eqref{eqn:V_i} corresponds to the formula (3.7) in \cite{luan_oster} with functional coefficients $\psi_{i1}$, $a_{ij}$ generalized. 
By substituting (\ref{eqn:Expan_of_r}) into (\ref{eqn:EPIRKalongExact}) we obtain 
\begin{equation}
\label{eqn:NumericalExpan1}
\hat{u}_{n+1}=\ut+\beta_1\psi_{s+1 \,1}(g_{s+1 1} h_n \Jt)h_n f(\ut)+\sum_{i=2}^{s} b_i(h_n \Jt)\sum_{q=2}^k \frac{h_n^{q+1}}{q!}\frac{\partial^q \nt}{\partial u^q}\underbrace{(V_i,\dots,V_i)}_{q \textrm{ times}}+\O(h_n^{k+2}) 
\end{equation}
The following lemmas represent analogues of lemmas 3.1 and 3.2 in \cite{luan_oster}. These results allow us to obtain the expansion of (\ref{eqn:NumericalExpan1}) avoiding terms containing powers of the possibly unbounded operator $\Jt$.
\vspace{-.05in}
\begin{lem}
\label{lemma:ShiftToHigherOrderPhi}
Under Assumptions 1 and 2, we have for all $t\geq 0$
\begin{equation}
\varphi_{k}(th_n\Jt)f(\ut)= \frac{1}{k!}\ut'+h_n \frac{t}{(k+1)!}\ut'' + t^2h_n^2\frac{1}{(k+2)!}\left(\ut^{(3)}-(k+2)!\varphi_{k+2}(th_n\Jt)\frac{\partial^2 \nt }{\partial u^2}(\ut)(\ut',\ut')\right)+\O(t^3h_n^3),
\label{eqn:phi_k_expansion}\end{equation}
and furthermore,
\begin{equation}
\psi_{i1}(t\Jt)f(\ut)=\mathbb{P}_{i1} \ut'+t \mathbb{P}_{i2}\ut'' +t^2\left( \mathbb{P}_{i3} \ut^{(3)}-\left(\sum_{k=1}^s p_{i1k}\varphi_{k+2}(t \Jt)\right)\frac{\partial^2 \nt}{\partial u^2}(\ut )(\ut' , \ut ')\right)+\O(t^3)
\label{eqn:psi_1_expansion}\end{equation}
where 
\begin{equation}
\label{eqn:Pcoeff}
\mathbb{P}_{i1}=\sum_{k=1}^s \frac{p_{i1k}}{k!}, \quad \mathbb{P}_{i2}=\sum_{k=1}^s \frac{p_{i1k}}{(k+1)!}, \quad \mathbb{P}_{i3}=\sum_{k=1}^s \frac{p_{i1k}}{(k+2)!}
\end{equation}
\end{lem}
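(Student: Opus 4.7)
The plan is to derive (\ref{eqn:phi_k_expansion}) first by iterating the standard recursion
$\varphi_k(z) = \frac{1}{k!} + z\varphi_{k+1}(z)$
with $z = th_n\tilde{J}_n$, and then to obtain (\ref{eqn:psi_1_expansion}) as an immediate corollary by linearity of $\psi_{i1}(z) = \sum_{k=1}^s p_{i1k}\varphi_k(z)$. The key algebraic input is the pair of identities in \eqref{eqn:J_mult_identities}, namely $\tilde{J}_n\tilde{u}_n' = \tilde{u}_n''$ and $\tilde{J}_n\tilde{u}_n'' = \tilde{u}_n^{(3)} - \frac{\partial^2\tilde{N}_n}{\partial u^2}(\tilde{u}_n)(\tilde{u}_n',\tilde{u}_n')$, together with the more general fact that $\tilde{J}_n\tilde{u}_n^{(k)}$ is bounded. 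These are what let us trade an unbounded factor of $\tilde{J}_n$ for differentiation in time applied to the exact solution.

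Concretely, I would start from $f(\tilde{u}_n) = \tilde{u}_n'$ and apply the recursion once:
$\varphi_k(th_n\tilde{J}_n)f(\tilde{u}_n) = \tfrac{1}{k!}\tilde{u}_n' + th_n\tilde{J}_n\varphi_{k+1}(th_n\tilde{J}_n)\tilde{u}_n'$.
Since $\tilde{J}_n$ commutes with any function of itself, the factor $\tilde{J}_n$ can be moved through $\varphi_{k+1}$ and applied directly to $\tilde{u}_n'$, producing $\tilde{u}_n''$. Applying the recursion a second time to $\varphi_{k+1}(th_n\tilde{J}_n)\tilde{u}_n''$ and again commuting $\tilde{J}_n$ past $\varphi_{k+2}$ produces $\tilde{u}_n^{(3)} - \frac{\partial^2\tilde{N}_n}{\partial u^2}(\tilde{u}_n)(\tilde{u}_n',\tilde{u}_n')$. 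Collecting the resulting three terms and factoring $t^2h_n^2/(k+2)!$ out of the last two reproduces the right-hand side of (\ref{eqn:phi_k_expansion}) exactly, where the explicit $\varphi_{k+2}(th_n\tilde{J}_n)$ must be retained because it multiplies an expression involving the second derivative of $\tilde{N}_n$, which is bounded but for which no further cancellation of $\tilde{J}_n$ is available.

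The $\mathcal{O}(t^3h_n^3)$ remainder is controlled by expanding $\varphi_{k+2}(th_n\tilde{J}_n)\tilde{u}_n^{(3)}$ one more step as $\tfrac{1}{(k+2)!}\tilde{u}_n^{(3)} + th_n\varphi_{k+3}(th_n\tilde{J}_n)\tilde{J}_n\tilde{u}_n^{(3)}$; by Assumption~2 the quantity $\tilde{J}_n\tilde{u}_n^{(3)}= \tilde{u}_n^{(4)} - \tfrac{d^3}{dt^3}\tilde{N}_n(\tilde{u}_n)$ is bounded, and by Assumption~1 together with the semigroup bound \eqref{eqn:expBounded} the operator $\varphi_{k+3}(th_n\tilde{J}_n)$ is bounded uniformly on the relevant time interval, so this tail is genuinely $\mathcal{O}(t^3h_n^3)$. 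Once (\ref{eqn:phi_k_expansion}) is in hand, (\ref{eqn:psi_1_expansion}) follows by writing $\psi_{i1}(t\tilde{J}_n)f(\tilde{u}_n) = \sum_{k=1}^{s} p_{i1k}\varphi_k(t\tilde{J}_n)f(\tilde{u}_n)$, substituting the expansion with $th_n$ replaced by $t$ (the proof is identical), and recognising the coefficients $\mathbb{P}_{i1},\mathbb{P}_{i2},\mathbb{P}_{i3}$ from (\ref{eqn:Pcoeff}).

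The main obstacle, and the reason this lemma is not entirely automatic, is bookkeeping with the potentially unbounded $\tilde{J}_n$: every intermediate expression must be massaged so that $\tilde{J}_n$ acts on a derivative $\tilde{u}_n^{(k)}$ (which is in the domain and produces a bounded result via \eqref{eqn:J_mult_identities}) rather than being left as a free factor. This dictates the order of operations — one must push $\tilde{J}_n$ through $\varphi_{k+j}$ before trying to estimate, and one must stop iterating the recursion at exactly the point where a further application would produce an $\tilde{N}_n$ term whose $\tilde{J}_n$-image is not available in the simplified form of \eqref{eqn:identities_of_g_tilde}. Compared to the corresponding statement in \cite{luan_oster}, the only genuinely new ingredient is the generalisation from $\varphi_1$ to the arbitrary linear combination $\psi_{i1}$, which is handled purely by linearity in the second part of the lemma.
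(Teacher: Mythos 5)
Your proposal is correct and follows essentially the same route as the paper's proof: iterate the recurrence $\varphi_k(z)=1/k!+z\varphi_{k+1}(z)$, commute $\Jt$ through functions of itself so that the identities \eqref{eqn:J_mult_identities} convert $\Jt\ut'$ and $\Jt\ut''$ into bounded expressions, expand $\varphi_{k+2}(th_n\Jt)\ut^{(3)}$ one further step and absorb the $t^3h_n^3\varphi_{k+3}(th_n\Jt)\Jt\ut^{(3)}$ tail into the remainder via boundedness of $\Jt\ut^{(3)}$, then obtain \eqref{eqn:psi_1_expansion} by linearity of $\psi_{i1}$. No gaps; this matches the paper's argument.
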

\begin{proof}
By using $f(\ut)=\ut '$, the recurrence relation 
\begin{equation}\label{eqn:phiRecurrenceRelation}
\varphi_k(z)=z\varphi_{k+1}(z)+1/k!
\end{equation} 
and formulas (\ref{eqn:J_mult_identities}) we have
\begin{eqnarray*}
\varphi_k(th_n\Jt)\ut' &=& \left(\frac{1}{k!}+th_n\Jt\varphi_{k+1}(th_n\Jt)\right)\ut '\\
&=& \frac{1}{k!}\ut'+th_n\varphi_{k+1}(th_n\Jt)\Jt \ut'\\
&=& \frac{1}{k!}\ut'+th_n\left(\frac{1}{(k+1)!}+th_n\Jt\varphi_{k+2}(th_n\Jt) \right)\ut''\\
&=&  \frac{1}{k!}\ut'+h_n \frac{t}{(k+1)!}\ut''+t^2h_n^2\varphi_{k+2}(th_n\Jt)\left(\ut^{(3)}-\frac{\partial^2 \nt }{\partial u^2}(\ut)(\ut',\ut')\right)\\
&= & \frac{1}{k!}\ut'+h_n \frac{t}{(k+1)!}\ut'' + t^2h_n^2\left(\frac{1}{(k+2)!}+th_n\Jt\varphi_{k+3}(th_n\Jt)\right)\ut^{(3)}-t^2h_n^2\varphi_{k+2}(th_n\Jt)\frac{\partial^2 \nt }{\partial u^2}(\ut)(\ut',\ut')\\
&=&\frac{1}{k!}\ut'+h_n \frac{t}{(k+1)!}\ut'' + t^2h_n^2\frac{1}{(k+2)!}\ut^{(3)}+t^3h_n^3\varphi_{k+3}(th_n\Jt)\Jt\ut^{(3)}-t^2h_n^2\varphi_{k+2}(th_n\Jt)\frac{\partial^2 \nt }{\partial u^2}(\ut)(\ut',\ut')\\
&=& \frac{1}{k!}\ut'+h_n \frac{t}{(k+1)!}\ut'' + t^2h_n^2\frac{1}{(k+2)!}\left(\ut^{(3)}-(k+2)!\varphi_{k+2}(th_n\Jt)\frac{\partial^2 \nt }{\partial u^2}(\ut)(\ut',\ut')\right)+\O(t^3h_n^3),
\end{eqnarray*}
where the last equality holds due to the boundedness of $\Jt\ut^{(3)}$. Equation \eqref{eqn:psi_1_expansion} follows directly from using these expansions for $\varphi_k(z)$ in $\psi_{i1}(z)=\sum_{k=1}^s p_{i1k}\varphi_{k}(z)$.
\end{proof}
\begin{lem}\label{lemma:V_i_expansion}
Under Assumptions 1 and 2, we have
\begin{equation}
V_i = \alpha_{i1}\mathbb{P}_{i1} \ut ' + h_n \alpha_{i1} g_{i1} \mathbb{P}_{i2} \ut''+h_n^2 \alpha_{i1}g_{i1}^2 \mathbb{P}_{i3} \ut^{(3)}+h_n^2 \Psi_{i}(h_n \Jt)\frac{\partial^2 \nt}{\partial y^2}(\ut)(\ut ' , \ut ' ) +\O(h_n^3)\label{eqn:V_i_expasion}\end{equation} 
where
\begin{equation}
\label{eqn:BigPsi}
\Psi_i(z)=\frac{1}{2!} \sum_{j=2}^{i-1}\alpha_{j1}^2\P_{j1}^2 a_{ij}(z)-g_{i1}^2\left( \sum_{k=1}^s p_{i1k}\varphi_{k+2}(g_{i1} z)\right)
\end{equation}
\end{lem}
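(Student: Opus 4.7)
The plan is to split the definition \eqref{eqn:V_i} as
$$V_i = \alpha_{i1}\psi_{i1}(g_{i1}h_n\Jt)f(\ut) + \sum_{j=2}^{i-1} a_{ij}(h_n\Jt)\rh_{nj},$$
expand each summand through $\O(h_n^3)$ entirely in terms of the bounded quantities $\ut^{(k)}$ and Fr\'{e}chet derivatives of $\nt$, and then identify the coefficient of $\partial^2\nt/\partial u^2(\ut)(\ut',\ut')$ with the operator $h_n^2 \Psi_i(h_n\Jt)$ of \eqref{eqn:BigPsi}.

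For the first summand I would invoke equation \eqref{eqn:psi_1_expansion} of Lemma~\ref{lemma:ShiftToHigherOrderPhi} with $t = g_{i1}h_n$ and scale by $\alpha_{i1}$. This directly produces the three leading terms $\alpha_{i1}\P_{i1}\ut'$, $h_n\alpha_{i1}g_{i1}\P_{i2}\ut''$, $h_n^2\alpha_{i1}g_{i1}^2\P_{i3}\ut^{(3)}$ of the claimed expansion together with the $-h_n^2 g_{i1}^2\bigl(\sum_k p_{i1k}\varphi_{k+2}(g_{i1}h_n\Jt)\bigr)$ contribution to the coefficient of $\partial^2\nt/\partial u^2(\ut)(\ut',\ut')$, all modulo $\O(h_n^3)$ (absorbing the normalization $\alpha_{i1}$ as in the definition of $\Psi_i$).

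For the sum, the pivotal step is to Taylor-expand each $\rh_{nj}$ via \eqref{eqn:Expan_of_r}: because \eqref{eqn:identities_of_g_tilde} forces $\partial\nt/\partial u(\ut) = 0$, the series starts at $q = 2$, giving
$$\rh_{nj} = \tfrac{h_n^2}{2!}\tfrac{\partial^2\nt}{\partial u^2}(\ut)(V_j,V_j) + \O(h_n^3).$$
Since $a_{ij}(h_n\Jt)$ is bounded and the prefactor $h_n^2$ is already present, I only need the leading-order expression $V_j = \alpha_{j1}\P_{j1}\ut' + \O(h_n)$, which follows again from Lemma~\ref{lemma:ShiftToHigherOrderPhi} applied to the $\psi_{j1}$ piece of \eqref{eqn:V_i} together with the observation that its inner sum is itself $\O(h_n^2)$. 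Bilinearity of $\partial^2\nt/\partial u^2(\ut)(\cdot,\cdot)$ then delivers
$$\sum_{j=2}^{i-1} a_{ij}(h_n\Jt)\rh_{nj} = h_n^2 \tfrac{1}{2!}\sum_{j=2}^{i-1}\alpha_{j1}^2\P_{j1}^2\, a_{ij}(h_n\Jt)\, \tfrac{\partial^2\nt}{\partial u^2}(\ut)(\ut',\ut') + \O(h_n^3),$$
which supplies precisely the remaining summand in \eqref{eqn:BigPsi}. Summing the two contributions gives \eqref{eqn:V_i_expasion}.

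The principal obstacle is bookkeeping rather than analysis: I must verify that every $\O(h_n^3)$ remainder is genuinely uniformly bounded in $\X$ and that the $\alpha_{i1}$ and $g_{i1}$ factors consolidate exactly into $\Psi_i(h_n\Jt)$. Boundedness of the remainders relies on \eqref{eqn:expBounded} (which yields uniform bounds on $\varphi_k(h_n\Jt)$ and hence on $a_{ij}(h_n\Jt)$), on the higher-order extension of \eqref{eqn:J_mult_identities} ensuring $\Jt\ut^{(k)}$ is bounded, and on the smoothness of $\nt$ along the exact solution from Assumption~2. The coefficient consolidation is then a clean algebraic step that exploits the very generality of $\psi_{i1}$ and $a_{ij}$ introduced in \eqref{eqn:EPIRK}--\eqref{eqn:a_coefficients}.
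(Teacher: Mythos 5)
Your proposal is correct and follows essentially the same route as the paper: expand $\rh_{nj}$ via \eqref{eqn:Expan_of_r} with the first Fr\'{e}chet derivative vanishing by \eqref{eqn:identities_of_g_tilde}, use Lemma~\ref{lemma:ShiftToHigherOrderPhi} both for the leading term $V_j=\alpha_{j1}\P_{j1}\ut'+\O(h_n)$ and for the $\psi_{i1}$ piece with $t=g_{i1}h_n$, and collect the coefficient of $\frac{\partial^2\nt}{\partial u^2}(\ut)(\ut',\ut')$ into $\Psi_i(h_n\Jt)$. The only difference is the order in which the two pieces are expanded, which is immaterial, and your remark about the $\alpha_{i1}$ normalization matches the bookkeeping done in the paper's own proof.
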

\begin{proof}
Inserting (\ref{eqn:V_i}) into (\ref{eqn:Expan_of_r}) for $k=2$ and using Lemma \ref{lemma:ShiftToHigherOrderPhi} with $t=g_{1j}h_n$ gives
\begin{eqnarray*}
\rh_{nj}&=& \frac{h_n^2}{2!}\frac{\partial^2 \nt }{\partial u^2}(\ut)(V_j,V_j)+\O(h_n^3)\\
&=& \frac{h_n^2}{2!}\frac{\partial^2 \nt }{\partial u^2}(\ut)\left(\alpha_{j1}\varphi_1(g_{j1}h_n\Jt)f(\ut),\alpha_{j1}\varphi_1(g_{j1}h_n\Jt)f(\ut)\right)+\O(h_n^3)\\
&=& \frac{h_n^2}{2!}\frac{\partial^2 \nt }{\partial u^2}(\ut)\left(\alpha_{j1}\mathbb{P}_{j1}\ut',\alpha_{j1}\mathbb{P}_{j1}\ut'\right)+\O(h_n^3)\\
&=& \frac{h_n^2}{2!}\alpha_{j1}^2\mathbb{P}_{j1}^2\frac{\partial^2 \nt }{\partial u^2}(\ut)\left(\ut',\ut'\right)+\O(h_n^3)\\
\end{eqnarray*}
Substituting this into (\ref{eqn:V_i}) we obtain
\begin{equation*}
V_i=\alpha_{i1}\psi_{i1}(g_{i1}h_n \Jt)+\frac{h_n^2}{2!}\sum_{j=2}^{i-1}a_{ij}(h_n \Jt)\alpha_{j1}^2\mathbb{P}_{j1}^2\frac{\partial^2 \nt }{\partial u^2}(\ut)\left(\ut',\ut'\right) 
\end{equation*}
which yields the desired result if Lemma \ref{lemma:ShiftToHigherOrderPhi} is used:
\begin{eqnarray*}
V_i&=&\alpha_{i1}\mathbb{P}_{i1} \ut'+\alpha_{i1}h_ng_{i1} \mathbb{P}_{i2}\ut'' +\alpha_{i1}h_n^2g_{i1}^2\left( \mathbb{P}_{i3} \ut^{(3)}-\left(\sum_{k=1}^s p_{i1k}\varphi_{k+2}(t \Jt)\right)\frac{\partial^2 \nt}{\partial u^2}(\ut )(\ut' , \ut ')\right) \\
&&\quad+\frac{h_n^2}{2!}\sum_{j=2}^{i-1}a_{ij}(h_n \Jt)\alpha_{j1}^2\mathbb{P}_{j1}^2\frac{\partial^2 \nt }{\partial u^2}(\ut)\left(\ut',\ut'\right) +\O(h_n^3)\\
&=& \alpha_{i1}\mathbb{P}_{i1} \ut'+h_n\alpha_{i1}g_{i1} \mathbb{P}_{i2}\ut'' +h_n^2\alpha_{i1}g_{i1}^2\mathbb{P}_{i3} \ut^{(3)}\\
&&\quad+h_n^2\left(-g_{i1}^2\sum_{k=1}^s\alpha_{i1} p_{i1k}\varphi_{k+2}(t \Jt)+\frac{1}{2!}\sum_{j=2}^{i-1}a_{ij}(h_n \Jt)\alpha_{j1}^2\mathbb{P}_{j1}^2\right)\frac{\partial^2 \nt }{\partial u^2}(\ut)\left(\ut',\ut'\right) +\O(h_n^3).\\
\end{eqnarray*}
\end{proof}
We now use these expansions of $V_i$ to obtain
\begin{eqnarray*}
\frac{\partial^4 \nt}{\partial u^4}(\ut)(V_i,V_i,V_i,V_i)&=& \alpha_{i1}^4\P_{i1}^4 \frac{\partial^4 \nt}{\partial u^4}(\ut)(\ut',\ut',\ut',\ut')+\O(h_n)\\
\frac{\partial^3 \nt}{\partial u^3}(\ut)(V_i,V_i,V_i) \quad &=& \alpha_{i1}^3\P_{i1}^3 \frac{\partial^3 \nt}{\partial u^3}(\ut)(\ut ',\ut ',\ut ')+ 3 h_n \alpha_{i1}^3 g_{i1} \P_{i1}^2 \P_{i2}\frac{\partial^3 \nt}{\partial u^3}(\ut)(\ut ',\ut ',\ut '')+\O(h_n^2)\\
\frac{\partial^2 \nt}{\partial u^2}(\ut)(V_i,V_i) \quad \quad &=& \alpha_{i1}^2\P_{i1}^2 \frac{\partial^2 \nt}{\partial u^2}(\ut)(\ut', \ut')
+2 h_n \alpha_{i1}^2 g_{i1} \P_{i1}\P_{i2} \frac{\partial^2 \nt}{\partial u^2}(\ut)(\ut', \ut'') 
\\
&&\quad+2 h_n^2 \alpha_{i1}^2 g_{i1}^2 \P_{i1}\P_{i3} \frac{\partial^2 \nt}{\partial u^2}(\ut)(\ut', \ut^{(3)})+ h_n^2 \alpha_{i1}^2 g_{i1}^2 \P_{i2}^2 \frac{\partial^2 \nt}{\partial u^2}(\ut)(\ut'', \ut'')\\
&& \quad +2 h_n^2 \alpha_{i1}^2\P_{i1} \frac{\partial^2 \nt}{\partial u^2}(\ut)\left( \ut', \Psi_i(h_n \Jt)\frac{\partial^2 \nt}{\partial u^2}(\ut)(\ut', \ut')\right) +\O(h_n^3)\end{eqnarray*}
and insert these expressions into (\ref{eqn:NumericalExpan1}) with $k=4$ to get 
%
\begin{eqnarray}
\hat{u}_{n+1}&=&\ut+\beta_1\psi_{s+1 \,1}(g_{s+1 1} h_n \Jt)h_n f(\ut)+\\
&&\qquad +\sum_{i=2}^{s} b_i(h_n \Jt)\left( \frac{h_n^3}{2} \frac{\partial^2 \nt}{\partial u^2}(\ut)(V_i,V_i) +\frac{h_n^4}{3!} \frac{\partial^3 \nt}{\partial u^3}(\ut)(V_i,V_i,V_i) +\frac{h_n^5}{4!} \frac{\partial^4 \nt}{\partial u^4}(\ut)(V_i,V_i,V_i,V_i)\right)+\O(h_n^6)\nonumber
\end{eqnarray}
and ultimately
\begin{eqnarray}
\hat{u}_{n+1}&=& \ut+  \beta_1\psi_{s+1 \,1}(g_{s+1 1} h_n \Jt)h_nf(\ut)+h_n^3 \left( \sum_{i=2}^{s} b_i(h_n \Jt)\frac{\alpha_{i1}^2\P_{i1}^2}{2!} \right) \frac{\partial^2 \nt}{\partial y^2}(\ut)(\ut', \ut')\nonumber\\
&& \quad + h_n^4\left(\sum_{i=2}^{s} b_i(h_n \Jt)\frac{2}{2!}\alpha_{i1}^2g_{i1}\P_{i1}\P_{i2}\right)\frac{\partial^2 \nt}{\partial y^2}(\ut)(\ut', \ut'') \nonumber\\
&& \quad + h_n^4 \left( \sum_{i=2}^{s} b_i(h_n \Jt)\frac{1}{3!}\alpha_{i1}^3\P_{i1}^3\right)\frac{\partial^3 \nt}{\partial u^3}(\ut)(\ut ',\ut ',\ut ')\nonumber\\
&&\quad+ h_n^5\left[\left( \sum_{i=2}^{s} b_i(h_n \Jt)\frac{2}{2!} g_{i1}^2 \alpha_{i1}^2\P_{i1}\P_{i3} \right)\frac{\partial^2 \nt}{\partial y^2}(\ut)(\ut', \ut^{(3)})\right.
\nonumber\\
&& \left.\quad + \left( \sum_{i=2}^{s} b_i(h_n \Jt)\frac{1}{2!}\alpha_{i1}^2 g_{i1}^2\P_{i2}^2 \right)\frac{\partial^2 \nt}{\partial y^2}(\ut)(\ut'', \ut'') \right.\nonumber\\
&&\left.\quad +\left(\sum_{i=2}^{s} b_i(h_n \Jt)\frac{3}{3!} g_{i1}\alpha_{i1}^3\P_{i1}^2\P_{i2}  \right)\frac{\partial^3 \nt}{\partial u^3}(\ut)(\ut ',\ut ',\ut '')\right.
\nonumber\\
&& \quad \left.+\left( \sum_{i=2}^{s} b_i(h_n \Jt)\frac{1}{4!}\alpha_{i1}^4\P_{i1}^4\right)\frac{\partial^4 \nt}{\partial u^4}(\ut)(\ut',\ut',\ut',\ut')\right]\nonumber \\
&&\quad +h_n^5 \left( \sum_{i=2}^{s} b_i(h_n \Jt)\frac{2}{2!}\alpha_{i1}^2\P_{i1}\right)\frac{\partial^2 \nt}{\partial u^2}(\ut)\left( \ut', \Psi_i(h_n \Jt)\frac{\partial^2 \nt}{\partial u^2}(\ut)(\ut', \ut')\right)\label{eqn:Numerical_Expansion_final} .
\end{eqnarray}
The expression for the Taylor expansion of the exact solution we borrow directly from \cite{luan_oster}:
  \begin{equation} \label{eqn:TaulorExpanExact}
  \begin{aligned} \tilde{u}_{n+1}&=\ut+h_n\varphi_1(h_n \Jt)f(\ut)+h_n^3 \varphi_3(h_n\Jt)\frac{\partial^2 \nt}{\partial u^2}(\ut)(\ut', \ut') \\
   & \quad + h_n^4 \varphi_4(h_n \Jt) \left[ 3\frac{\partial^2 \nt}{\partial u^2}(\ut)(\ut', \ut'')+\frac{\partial^3 \nt}{\partial u^3}(\ut)(\ut ',\ut ',\ut ')\right] \\
   & \quad + h_n^5 \varphi_5(h_n \Jt)\left[ 4 \frac{\partial^2 \nt}{\partial u^2}(\ut)(\ut', \ut^{(3)})+3\frac{\partial^2 \nt}{\partial u^2}(\ut)(\ut'', \ut'')\right. \\
    &\quad\left.+6\frac{\partial^3 \nt}{\partial u^3}(\ut)(\ut ',\ut ',\ut '')+\frac{\partial^4 \nt}{\partial u^4}(\ut)(\ut',\ut',\ut',\ut')\right]+\O(h_n^6) 
   \end{aligned}
   \end{equation}
Now subtracting (\ref{eqn:TaulorExpanExact}) from (\ref{eqn:Numerical_Expansion_final}) we obtain the expression for the local error $\tilde{e}_{n+1}=\hat{u}_{n+1}-\tilde{u}_{n+1} $ in the form:
 \begin{eqnarray}
   \label{eqn:localError}
   \tilde{e}_{n+1} &&= h_n\left(\beta_1\psi_{s+1 \,1}(g_{s+1 1}h_n\Jt)-\varphi_1(h_n\Jt)\right)f(\ut)\nonumber \\
  &&+ h_n^3 \left(\sum_{i=2}^{s} b_i(h_n \Jt)\frac{\alpha_{i1}^2\P_{i1}^2}{2!} -\varphi_3(h_n\Jt)\right)\frac{\partial^2 \nt}{\partial y^2}(\ut)(\ut', \ut') \\ \nonumber
 && +h_n^4 \left(\sum_{i=2}^{s} b_i(h_n \Jt)\alpha_{i1}^2g_{i1}\P_{i1}\P_{i2}-3\varphi_4(h_n\Jt)\right)\frac{\partial^2 \nt}{\partial y^2}(\ut)(\ut', \ut'') \\ \nonumber
&& + h_n^4 \left(\sum_{i=2}^{s} \frac{1}{3!}b_i(h_n \Jt)\alpha_{i1}^{3}\mathbb{P}_{i1}^3-\varphi_4(h_n\Jt)\right)\frac{\partial^3 \nt}{\partial u^3}(\ut)(\ut ',\ut ',\ut ')\\ \nonumber
  && +h_n^5\left( \sum_{i=2}^{s} b_i(h_n \Jt)\frac{2}{2!} g_{i1}^2 \alpha_{i1}^2\P_{i1}\P_{i3}-4\varphi_5(h_n\Jt) \right)\frac{\partial^2 \nt}{\partial y^2}(\ut)(\ut', \ut^{(3)})\\ \nonumber
&&+  h_n^5\left( \sum_{i=2}^{s} b_i(h_n \Jt)\frac{1}{2!}\alpha_{i1}^2 g_{i1}^2\P_{i2}^2 -3\varphi_5(h_n\Jt)\right)\frac{\partial^2 \nt}{\partial y^2}(\ut)(\ut'', \ut'')\\ \nonumber
  && +h_n^5\left(\sum_{i=2}^{s} b_i(h_n \Jt)\frac{1}{2!} g_{i1}\alpha_{i1}^3\P_{i1}^2\P_{i2} -6\varphi_5(h_n\Jt) \right)\frac{\partial^3 \nt}{\partial u^3}(\ut)(\ut ',\ut ',\ut '')\\ \nonumber
 &&+h_n^5 \left( \sum_{i=2}^{s} b_i(h_n \Jt)\frac{1}{4!}\alpha_{i1}^4\P_{i1}^4-\varphi_5(h_n\Jt)\right)\frac{\partial^4 \nt}{\partial u^4}(\ut)(\ut',\ut',\ut',\ut')\\ \nonumber
 &&+h_n^5\left( \sum_{i=2}^{s} b_i(h_n \Jt)\frac{2}{2!}\alpha_{i1}^2\P_{i1}\right)\frac{\partial^2 \nt}{\partial u^2}(\ut)\left( \ut', \Psi_i(h_n \Jt)\frac{\partial^2 \nt}{\partial u^2}(\ut)(\ut', \ut')\right)\nonumber,
   \end{eqnarray}
with $\Psi_{i}(z)$ given by (\ref{eqn:BigPsi}).  From (\ref{eqn:localError}) we can easily read off the stiff order conditions  by ensuring the terms for a given order (three,four, or five) vanish.  These conditions are given in Table \ref{table:StiffOC}.  To eliminate the first order term in our error we must have $\beta_1\psi_{s+1 \,1}(g_{s+1 1}z)=\varphi_1(z)$. If this condition is satisfied, the resulting method will be of stiff order two. Throughout the rest of the paper we will set $\beta_1=g_{s+1}=1$ and $\psi_{s+1\, 1}=\varphi_1$. In Section \ref{sec:construction} we will show how more flexibility in the stiff order conditions for EPIRK compared to the exponential Rosenbrock methods leads to construction of more efficient techniques. 
\begin{table}[htb]
\begin{center}
\begin{tabular}{lll}
\hline 
Ref. label & \hspace{10pt}Order condition & Order \\ 
\hline 
C1   & \hspace{10pt} $\sum_{i=2}^{s} b_i(Z)\alpha_{i1}^2\P_{i1}^2 =2\varphi_3(Z)$ \hspace{10pt}& 3  \\ 

C2 &\hspace{10pt} $\sum_{i=2}^{s} b_i(Z)\alpha_{i1}^2g_{i1}\P_{i1}\P_{i2}=3\varphi_4(Z)$ & 4 \\ 

C3 &\hspace{10pt} $\sum_{i=2}^{s} b_i(Z)\alpha_{i1}^3\mathbb{P}_{i1}^3=3!\varphi_4(Z)$& 4 \\ 
 
C4 &\hspace{10pt} $\sum_{i=2}^{s} b_i(Z) g_{i1}^2 \alpha_{i1}^2\P_{i1}\P_{i3}=4\varphi_5(Z)$ & 5\\ 

C5 &\hspace{10pt} $ \sum_{i=2}^{s} b_i(Z)\alpha_{i1}^2 g_{i1}^2\P_{i2}^2 =3!\varphi_5(Z)$ & 5\\ 

C6 &\hspace{10pt} $\sum_{i=2}^{s} b_i(Z) g_{i1}\alpha_{i1}^3\P_{i1}^2\P_{i2} =4!\varphi_5(Z)$ & 5 \\ 

C7 &\hspace{10pt} $\sum_{i=2}^{s} b_i(Z)\alpha_{i1}^4\P_{i1}^4=4!\varphi_5(Z)$ & 5 \\ 

C8&\hspace{10pt} $ \sum_{i=2}^{s} b_i(Z)\alpha_{i1}^2\P_{i1}K\Psi_i(Z)=0$ & 5 \\ 
\hline 
\end{tabular}  
\end{center}
\caption{Stiff order conditions for EPIRK methods.  Note that $Z$ and $K$ are arbitrary square matrices and $\Psi_i$ is given by (\ref{eqn:BigPsi})}
\label{table:StiffOC}
\end{table}
\subsection{Convergence.}\label{sec:convergence} 

The majority of the convergence proof presented in \cite{luan_oster} for the exponential Rosenbrock (EXPRB) methods can be applied directly to the EPIRK schemes. The only exception is Lemma 4.5 in \cite{luan_oster}. In order to obtain the same result as in this lemma we need to use additional assumption on the coefficients.  Assumption 3 given below allows for less restrictive choice of the coefficients compared to EXPRB methods but enables us to proceed with the convergence proof in the same way as in \cite{luan_oster}: 

\noindent
 \begin{assumption2}{3}\label{assump:CoeffRestrictionConvergence}
 Suppose the coefficients of an EPIRK scheme satisfy one of the following for each $i$ and all $k$
  \begin{equation}
\label{eqn:CoeffRestrictionConvergence}
\alpha_{i1}p_{i1k} = g_{i1} \quad \textrm{ or } \quad \alpha_{i1}=g_{i1} \quad \textrm{ or } \quad p_{i1k}=g_{i1}.
\end{equation}
 \end{assumption2}
 %
%

Given this assumption we then need to modify Lemma 4.5 and its proof as described in the Appendix. With this modification the convergence is proved exactly as in \cite{luan_oster} and results in the following Theorem \ref{thm:convergenceEPIRK} (see Theorem 4.1 in \cite{luan_oster}). 

\begin{thm}
\label{thm:convergenceEPIRK}
Let  the initial value problem \eqref{eqn:ivp} satisfy Assumptions 1 and 2.  Consider for its numerical solution an explicit exponential propagation iterative method of Runge-Kutta type (\ref{eqn:EPIRK}) that fullfills Assumption 3 and the order conditions of Table \ref{table:StiffOC} up to order $p$ for some $3\leq p \leq 5$. Then, under the stability assumption (4.16 \cite{luan_oster}), the method converges with order $p$.  In particular, the numerical solution satisfies the error bound 
\begin{equation}
\label{eqn:convergenceErrorBound}
\norm{u_n-u(t_n)}\leq C\sum_{\nu=0}^{n-1}h_\nu^{p+1}
\end{equation}
uniformly on $t_0\leq t_n \leq T$. The constant $C$ is independent of the chosen step size sequence.
\end{thm}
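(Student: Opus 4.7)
The plan is to follow the template of the convergence proof for exponential Rosenbrock methods in \cite{luan_oster}, tracking carefully only those places where the EPIRK-specific generalizations (the $g_{ij}$ coefficients and the general $\psi_{i1}$ functions) alter the argument. Write the global error as $e_n = u_n - u(t_n)$, insert the exact solution into the scheme at step $n$ to obtain $\hat{u}_{n+1}$ as in \eqref{eqn:EPIRKalongExact}, and split
\[
e_{n+1} = \bigl(u_{n+1}-\hat{u}_{n+1}\bigr) + \bigl(\hat{u}_{n+1}-\tilde{u}_{n+1}\bigr) = E_{n+1} + \tilde{e}_{n+1}.
\]
The second piece is the local error, and by the stiff order conditions C1--C8 up to order $p$ (Table \ref{table:StiffOC}), the expansion \eqref{eqn:localError} yields $\norm{\tilde{e}_{n+1}} \leq C h_n^{p+1}$, with constant independent of stiffness because every coefficient of a power of $h_n$ in \eqref{eqn:localError} is multiplied by a bounded composition of derivatives of $\tilde{N}_n$ and $\tilde{u}_n^{(k)}$ (by Assumption 2 and the key identity $\tilde{J} u^{(k)} = u^{(k+1)} - (d/dt)^k \tilde{N}_n(u)$).

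The first piece $E_{n+1}$ must be written as a perturbation of $e_n$ that is propagated through the scheme. Subtracting the EPIRK scheme from \eqref{eqn:EPIRKalongExact} stage by stage, expanding $f$ and $r$ around $\tilde u_n$, and using the Lipschitz bound \eqref{eqn:lipchitz_jac} together with the boundedness of $\varphi_k(h_n J)$ via \eqref{eqn:expBounded} gives a recursion of the form
\[
E_{n+1} = e^{h_n \tilde J_n} e_n + h_n R_n, \qquad \norm{R_n} \leq C\bigl(\norm{e_n} + \norm{e_n}^2 + \dots\bigr).
\]
Iterating yields
\[
\norm{e_n} \leq C\sum_{\nu=0}^{n-1} \norm{e^{(t_n-t_{\nu+1})\tilde J_\nu}\, \tilde e_{\nu+1}} + \text{higher-order terms in } \norm{e_\nu},
\]
and invoking the stability assumption (4.16 in \cite{luan_oster}), which gives a uniform bound on products of the semigroup operators $e^{h_\nu \tilde J_\nu}$, followed by a discrete Gronwall argument absorbs the higher-order terms and delivers the claimed bound $\norm{u_n - u(t_n)} \leq C\sum h_\nu^{p+1}$.

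The main obstacle, and the only place the EPIRK generalization forces new work, is the analogue of Lemma 4.5 of \cite{luan_oster}. That lemma bounds expressions of the form $(\psi_{i1}(g_{i1}h_n J_n) - \psi_{i1}(g_{i1}h_n \tilde J_n)) f(\tilde u_n)$ in terms of $\norm{e_n}$; for pure Rosenbrock the coefficient structure collapses to $\varphi_1(h_n J_n)$ and the argument is clean, but for a general linear combination $\psi_{i1}(z) = \sum_k p_{i1k}\varphi_k(z)$ scaled by $g_{i1}$ one needs a way to write the difference as an integral representation whose integrand contains the bounded factor $\tilde J_n^{-1}(\partial N/\partial u(u_n)-\partial N/\partial u(\tilde u_n))$. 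Assumption 3 is precisely what allows this: each of its three alternatives makes the product $\alpha_{i1}\psi_{i1}(g_{i1}z)z^{-1}$ behave, up to bounded multiplicative factors, like $\varphi_1$ times $g_{i1}$, so the existing Rosenbrock estimate goes through with at most a constant change.

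Once this modified lemma is in hand, the rest of the argument—the recursion set-up, the Gronwall closure, and the transfer from local to global order—is identical to \cite{luan_oster} and yields Theorem \ref{thm:convergenceEPIRK}. I would therefore present the proof by (i) stating the local-error bound from \eqref{eqn:localError} and Table \ref{table:StiffOC}, (ii) deriving the error-propagation recursion, (iii) referring to the Appendix for the modified Lemma 4.5 under Assumption 3, and (iv) citing the stability/Gronwall step from \cite{luan_oster} to conclude \eqref{eqn:convergenceErrorBound}.
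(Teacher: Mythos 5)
Your overall skeleton coincides with the paper's: the global error is split exactly as in \eqref{eqn:globalErrorExpression1}, i.e. $e_{n+1}=e^{h_n\Jt}e_n+h_nP_n+\tilde{e}_{n+1}$, the local error $\tilde{e}_{n+1}=\O(h_n^{p+1})$ is read off from \eqref{eqn:localError} once the conditions of Table \ref{table:StiffOC} hold up to order $p$, Lemmas 4.1--4.4 of \cite{luan_oster} are reused verbatim, and the conclusion follows from the stability assumption (4.16) together with a discrete Gronwall argument. The only genuinely new ingredient is the EPIRK analogue of Lemma 4.5, which is Lemma \ref{lemma:EstimatePn} in the Appendix and is where Assumption 3 enters; up to this point your plan is the paper's plan.

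The gap is in your account of that one new ingredient. The modified lemma does not bound terms of the form $(\psi_{i1}(g_{i1}h_nJ_n)-\psi_{i1}(g_{i1}h_n\Jt))f(\ut)$ --- such differences are already controlled via the Lipschitz bound \eqref{eqn:lipchitz_jac} (Lemma 4.3 of \cite{luan_oster}) and need no restriction on the coefficients. What it bounds are $\norm{\widehat{E}_{ni}}=\norm{U_{ni}-\wh{U}_{ni}}$ and $\norm{P_n}$, estimates \eqref{eqn:EstimateEhat}--\eqref{eqn:EstimatePn}, by induction over the stages. The term that actually forces Assumption 3 is $\alpha_{i1}h_n\sum_k p_{i1k}\varphi_k(g_{i1}h_nJ_n)J_ne_n$, which appears after writing $f(u_n)-f(\ut)=J_ne_n+N_n(u_n)-N_n(\ut)$ inside $\widehat{E}_{ni}$; the unbounded factor $h_nJ_n$ is removed by the recurrence \eqref{eqn:phiRecurrenceRelation}, which converts $g_{i1}h_nJ_n\varphi_k(g_{i1}h_nJ_n)$ into $\varphi_{k-1}(g_{i1}h_nJ_n)$ minus a constant, and the alternatives in \eqref{eqn:CoeffRestrictionConvergence} are precisely what make the scalar prefactors line up with $g_{i1}$ so that this substitution is available. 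Your proposed mechanism --- an integral representation whose integrand contains the ``bounded factor'' $\Jt^{-1}\bigl(\tfrac{\partial N}{\partial u}(u_n)-\tfrac{\partial N}{\partial u}(\ut)\bigr)$ --- would fail: under Assumptions 1 and 2 the generator $\Jt$ need not be invertible and $\Jt^{-1}$ need not be bounded, and no such inverse occurs in the paper's or in \cite{luan_oster}'s argument; likewise the heuristic that Assumption 3 makes $\alpha_{i1}\psi_{i1}(g_{i1}z)z^{-1}$ behave like $\varphi_1$ is not what is used. Since this is the only step that is new relative to \cite{luan_oster}, the proposal as written does not yet contain a correct argument for it; with the recurrence-relation mechanism substituted in, the remainder of your outline goes through exactly as in the paper.
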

Using Theorem \ref{thm:convergenceEPIRK} we now construct specific stiffly accurate methods of order four and five.

\section{Construction of new schemes}  \label{sec:construction}
In this section we demonstrate how the flexibility of coefficients in the EPIRK framework can be used to construct efficient stiffly accurate schemes.  The improvement of the computational cost comes from constructing a method particularly optimized given an algorithm for evaluating the exponential matrix function-vector products.  Evaluation of exponential-like matrix functions and vector products $\psi(A)v$ constitutes the largest computational cost of an exponential integrator. The EPIRK methods have been originally introduced to minimize the number of such estimates required per time step as well as reduce the cost of each of these evaluations by carefully selecting the exponential functions in the products \cite{Tokman, Loffeld, LoffeldTokmanPar, RainwaterTokman}.  In \cite{tokmanadapt} the authors derived particularly efficient EPIRK methods that use the adaptive Krylov algorithm to approximate products $\psi(A)v$.  While there have been several techniques introduced to evaluate $\psi(A)v$, the adaptive Krylov algorithm remains one of the most general cost efficient way to estimate these terms if no a priori information is available about the spectrum of $A$.  Thus we adopt using this algorithm in constructing the new stiffly accurate EPIRK methods here.  

We begin with a description of our method of choice, the adaptive Krylov algorithm, and discuss the structural requirements application of this method imposes on a time integrator.  A systematic approach to solving the order conditions is then offered to derive appropriate three-stage methods of order four and five.  Specific EPIRK schemes are constructed following this technique.
	
	\subsection{Adaptive Krylov algorithm and its implementations}\label{subsec:AdaptKry}
	
			The computational cost of the standard Krylov-projection based algorithm to approximate terms of type $\psi_{ij}(g_{ij}h_nJ_n)v$ scales as $\mathcal{O}(m^2)$ where $m$ is the size of the Krylov basis required to achieve a prescribed accuracy.  The value $m$, in turn, depends on the spectrum of the matrix and it is expected that the computational cost of the Krylov projection will increase with the time step size $h_n$.  Thus, for a given error tolerance it might actually be more efficient to integrate with a smaller time step rather than encounter large Krylov bases.  An alternative and more efficient approach was proposed in \cite{sidje, niesenwright}. The adaptive Krylov algorithm seeks to evaluate linear combinations of type
								\begin{equation}
					\label{eqn:adaptKrylLinearComboPhi}
					\varphi_0(A)b_0+\varphi_1(A)b_1+\varphi_2(A)b_2+\cdots+\varphi_p(A)b_p
					\end{equation}
			where $A\in\R^{N\times N}$ and $b_{i}\in\R^N$ for $i=0,\cdots,p$.  The idea is to replace computing one large Krylov subspace of size $m$ with a finite number of smaller Krylov subspaces of sizes $m_1$, $m_2$, ..., $m_K$.  In \cite{niesenwright} it was observed that expressions like (\ref{eqn:adaptKrylLinearComboPhi}) can be computed in a way that replaces the evaluation of terms like $\varphi_p(A)b_P$ with something that requires fewer Krylov vectors, like $\varphi_p(\tau_k A)b_p$ with $0<\tau_k<1$.  For example, consider the following discretization of the interval $[0,1]$, $0=t_0<t_1<\dots<t_k<t_k+\tau_k<\dots<t_K=t_{end}=1$.  Then we would have to compute $K$ Krylov projections at computational cost proportional to $\mathcal{O}(m_1^2) + \mathcal{O}(m_2^2)+ ... + \mathcal{O}(m_K^2)$ which can be more efficient than computing $\varphi_p(A)b_p$ that has computational complexity of $\mathcal{O}(m^2)$.  Obviously if $K$ gets too large, the total cost of computing $K$ smaller Krylov subspaces may exceed the cost of computing only one large Krylov basis.  Therefore the efficiency of the algorithm is dependent on the choice of step sizes $\tau_k$.  As the optimal choice varies, an algorithm was developed in \cite{niesenwright} to choose these step sizes adaptively.  Further details and information can be found in \cite{niesenwright, sidje, tokmanadapt, LoffeldTokmanPar}.

\subsubsection*{Vertical exponential-Krylov methods.} \label{subsubsec:verticalKrylov}
In \cite{tokmanadapt}, specific EPIRK methods were designed to efficiently employ the adaptive Krylov algorithm.  By enforcing the requirement $\psi_{ij}(z)=\varphi_k(z)$ for fixed $j$ and $i=2,\dots, s-1$, we can construct an $s$-stage adaptive Krylov based EPIRK scheme that requires only $s$ Krylov projections per time step\cite{tokmanadapt,LoffeldTokmanPar}.  This requirement is necessary since if a linear combination (\ref{eqn:adaptKrylLinearComboPhi}) consists of a single term $\varphi_k(tJ)b_k$, it can be computed as $\varphi_k(tJ)b_k=u(t)/t^k$ for any real value $t$.  Hence we can compute $\varphi_k(g_{ij}hJ)b_k=u(g_{ij})/g_{ij}^k$ for fixed $j$ with just one Krylov evaluation (as long as $g_{ij}$ are included in the set of times $\set{t_k}_{k=0}^{K}$).  We refer to this implementation as ``vertical exponential-adaptive-Krylov'' or ``vertical exponential-Krylov'' due to computing the ``columns'' or the terms of each stage with shared vectors $b_k$ with one Krylov subspace. Note that the last term of the last stage is not restricted to a single $\varphi$-function but rather can be any linear combination of $\varphi$-functions since it does not share a vector with any terms of the previous stages.

The flexibility and choice of $g_{ij}$ coefficients in EPIRK methods allows for further improvement in overall computational cost.  In particular, by taking these coefficients to be smaller than 1 we would effectively reduce the size of the Krylov basis since $t_{end}<1$.  When implementing vertical adaptive Krylov we have for each fixed $j=1,\dots,s$, $t_{end}=\max_{i\in[2,j+1]}g_{ij}.$  Therefore an efficient vertical Krylov EPIRK scheme has $\max_i{g_{ij}}<1$ for each $j=2,\dots,s$. The classical order conditions offer enough freedom to choose small $g$-coefficients.  However, the same approach is difficult to use to construct the stiffly accurate methods since the stiff order conditions require that $\max_i{g_{ij}}=g_{(s+1)j}=1$ as shown in Lemma \ref{lemma:gCoeff_equal_1}. 
\begin{lem}\label{lemma:gCoeff_equal_1}
An $s$-stage EPIRK method satisfying the stiff order conditions  must have $g_{(s+1)k}=1$ for $k=2,\dots, s$.
\end{lem}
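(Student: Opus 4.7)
The plan is to argue by contradiction: suppose $g_{(s+1)k_0}\neq 1$ for some $k_0\in\{2,\dots,s\}$ and derive an incompatibility with the conditions in Table \ref{table:StiffOC}. My first step is to expand each $b_i(Z)$ via \eqref{eqn:a_coefficients} and regroup the LHS of every stiff order condition by the outer index $k$; every condition then takes the form
\begin{equation*}
\sum_{k=2}^{s} D_k^{(n)}\,\psi_{s+1,k}(g_{s+1,k}Z) \;=\; c_n\,\varphi_{p_n}(Z),
\end{equation*}
where the $D_k^{(n)}$ are explicit polynomial combinations of $\beta_k$ and of the intermediate-stage parameters, while $(c_n,p_n)$ depends on the specific condition.

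The analytic heart of the argument is the linear independence of the family $\{\varphi_j(gZ):j\geq 1,\;g>0\}$ viewed as entire functions of $Z$. Since the RHS lives entirely in the ``$g=1$'' class, every contribution on the LHS coming from a $k$ with $g_{s+1,k}\neq 1$ must cancel. Assuming (without loss of generality, up to grouping) that $g_{s+1,k_0}$ is distinct from the other $g_{s+1,k}$, and using that a genuinely present stage has $\psi_{s+1,k_0}\not\equiv 0$ and $\beta_{k_0}\neq 0$, I would conclude that $D_{k_0}^{(n)}=0$ for every $n$.

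Writing these vanishing conditions out with the abbreviation $x_i=\alpha_{i1}\mathbb{P}_{i1}$, conditions C1, C3, and C7 yield the three power-sum identities
\begin{equation*}
\sum_{i=2}^{k_0}\binom{k_0-1}{i-1}(-1)^{k_0-i}\,x_i^n \;=\; 0,\qquad n=2,3,4,
\end{equation*}
with further analogous identities coming from C2 and C4--C6 in the mixed parameters $\alpha_{i1}g_{i1}\mathbb{P}_{ij}$. I would close the argument by observing that these signed-binomial weights are exactly those of a $(k_0{-}1)$-fold forward-difference operator, so three simultaneous power-sum vanishings of this form are incompatible with all $x_i$ being nonzero; the case $k_0=3$ is already transparent, since $x_3^2=2x_2^2$ and $x_3^3=2x_2^3$ immediately force $x_2=x_3=0$, contradicting nontriviality of the stages.

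I expect the main obstacle to be the combinatorial step for general $k_0$, together with the bookkeeping in the case where several $g_{s+1,k}$'s coincide, in which case one must argue group-by-group rather than index-by-index; by contrast, the analytic step invoking linear independence of $\{\varphi_j(gZ)\}$ is a standard consequence of comparing exponential growth rates as $Z\to\infty$ along the positive real axis.
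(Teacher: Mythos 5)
Your opening moves match the idea underlying the paper's argument: regroup the left-hand sides of the conditions by the outer index $k$ and use linear independence of the scaled functions $\varphi_j(gZ)$ to force the coefficient of any $\psi_{s+1,k}(g_{s+1,k}Z)$ with $g_{s+1,k}\neq 1$ to vanish. The genuine gap is in your endgame. Having deduced $D_{k_0}^{(n)}=0$, you still need a contradiction, and the one you propose --- that the signed-binomial power-sum identities $\sum_{i=2}^{k_0}\binom{k_0-1}{i-1}(-1)^{k_0-i}x_i^{\,n}=0$, $n=2,3,4$, are incompatible with ``nontrivial stages'' --- does not hold up. First, nothing in the hypotheses of the lemma asserts $x_i=\alpha_{i1}\mathbb{P}_{i1}\neq 0$, so even when the identities force some or all $x_i$ to vanish (as in your $k_0=3$ computation), that is not yet a contradiction; you would have to feed $x_2=\dots=x_{k_0}=0$ back into condition C1 of Table \ref{table:StiffOC}, which only closes the argument when $k_0=s$. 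Second, for $k_0\geq 4$ the identities need not force anything: with the $k_0=4$ weights $(3,-3,1)$ coming from $\Delta^3$, the choice $x_2=x_3$, $x_4=0$ annihilates every power sum identically, and the analogous ``duplicated stage'' cancellation (take stages $2$ and $3$ with identical first-term parameters and $\alpha_{41}=0$) kills the mixed conditions C2, C4--C6 as well, so the coefficient-vanishing route cannot yield a contradiction without nondegeneracy assumptions that are not part of the statement. The coinciding-$g$ case you defer as bookkeeping makes this worse, since then only grouped combinations of the $D_k^{(n)}$ are forced to vanish.

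The paper closes the argument by a shorter route that avoids all of this, and you should adopt it. Since at least one $b_j\not\equiv 0$ (otherwise the method is only of order two), conditions (C1)--(C3) can be solved for the nonzero $b_j(Z)$, and the solutions are linear combinations of $\varphi_3(Z)$ and $\varphi_4(Z)$ with the unscaled argument $Z$. The relation \eqref{eqn:a_coefficients} between the $b_j$ and the terms $\beta_k\psi_{s+1,k}(g_{s+1,k}z)$ is triangular ($b_s$ involves only $k=s$, $b_{s-1}$ adds the single new term $k=s-1$, and so on), so substituting the solutions back and inverting shows that each genuinely present $\psi_{s+1,k}(g_{s+1,k}Z)$ must itself equal $A_3\varphi_3(Z)+A_4\varphi_4(Z)$ for all $Z$; the same independence-of-scalings fact you invoke then gives $g_{s+1,k}=1$ directly, with no combinatorial step and no assumption on the $x_i$.
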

 \begin{proof}
 	It must be the case that there is at least one $b_j(Z)$ such that $b_j\nequiv 0$ for some $j=2,\dots,s$, otherwise the method can only be of order two.  We can then solve conditions (C1)-(C3) for the non-zero $b_j(Z)$ functions. The resulting $b_j(Z)$ are simply linear combinations of $\varphi_3(Z)$, $\varphi_4(Z)$ functions. By substituting these solution(s) for $b_j$'s into (\ref{eqn:a_coefficients}) we find that $$\psi_{(s+1)k}(g_{(s+1)k}Z)=A_3\varphi_3(Z)+A_4\varphi_4(Z) , \quad \textrm{ for some }A_i \in \R$$
 Since this must holds for all $Z\in \R^{n\times n}$, we can conclude that $g_{(s+1)k}=1$.
\end{proof} 
Even though the stiff order conditions are restrictive with respect to the $g$-coefficients in the last stage, we still have flexibility with the $g$'s in the internal stages and will use them to reduce the computational cost.  Our approach is to modify the implementation of the adaptive Krylov algorithm from computing ``vertically'' to computing ``horizontally'' or in a ``mixed" way as described below.

\subsubsection*{Horizontal exponential-adaptive-Krylov.}
The ``horizontal'' exponential-adaptive-Krylov, or exponential-Krylov, algorithm is intended to compute all terms in each stage with one Krylov evaluation.  In contrast to the vertical Krylov, here we compute along the ``rows'' (i.e. compute $\psi_{i_0j}(z)b_j$ for fixed $i_0$ and $j=1,\dots,i_0-1$).  Since each term will have a different vector $b_j$, the only way for a $s$-stage method to require only $s$ Krylov evaluations per time-step is to enforce the condition that any non-zero exponential terms in a given stage must share the same $g_{ij}$-value. As an example let us consider the following internal stage of a five-stage EPIRK method
\begin{equation}
\label{eqn:InterStageExample}
U_{n5}=u_n+\alpha_{51}\psi_{51}(g_{51}h_nJ_n)h_nf(u_n)+\alpha_{52}\psi_{52}(g_{52}h_nJ_n)h_n\Delta r(u_n)+0\cdot\psi_{53}(g_{53}h_nJ_n)\Delta^2 r(u_n)+\alpha_{54}\psi_{54}(g_{54}h_nJ_n)\Delta^3 r(u_n)
\end{equation}
where $\psi_{51}(z)=\varphi_1(z), \psi_{52}(z)=\varphi_2(z)+\varphi_{3}(z), $ and $\psi_{54}(z)=\varphi_3(z)$.   Using the recurrence relation (\ref{eqn:phiRecurrenceRelation}) we can express (\ref{eqn:InterStageExample}) as
\begin{equation}\label{eqn:InterStageExample_2}
\begin{array}{ll}
U_{n5}&=u_n + \alpha_{51}\!\left(\left(\varphi_3(g_{51}h_nJ_n)g_{51}h_nJ_n\!+\!\frac{1}{2}\right)g_{51}h_nJ_n+1\right)h_nf(u_n) \\
&\quad\quad + \alpha_{52} \left((\varphi_3(g_{52}h_nJ_n)g_{52}h_nJ_n+\frac{1}{2})+\varphi_3(g_{52}h_nJ_n)\right)h_n\Delta r(u_n)\\
&\quad\quad  + 0\cdot h_n\Delta^2r(u_n)+\alpha_{55}\varphi_3(g_{54}h_nJ_n)h_n\Delta^3 r(u_n)\\
&=u_n + b_0+\varphi_3(g_{51}h_nJ_n)b_1 + \varphi_3(g_{52}h_nJ_n)b_2+0 + \varphi_3(g_{54}h_nJ_n)b_4
\end{array}.
\end{equation}
Since the vectors $b_1,b_2,b_4$ are not the same we must have $g_{51}=g_{52}=g_{54}$ so (\ref{eqn:InterStageExample_2}) can be written in the form
\begin{equation}\label{eqn:InterStageExample_3}
U_{n5}=u_n+\varphi_3(g_{51}h_nJ_n)(b_1+b_2+b_4).
\end{equation}
Then the adaptive Krylov algorithm can be used to compute (\ref{eqn:InterStageExample_3}) with the possibility of taking $g_{51}<1$. Note that due to Lemma \ref{lemma:gCoeff_equal_1} all projections in the vertical method require adaptive Krylov algorithm to integrate over the interval $[0,1]$. Thus the savings associated with smaller $g$ coefficients which are equivalent to reducing the integration interval in adaptive Krylov algorithm to $[0,g]$ are not possible for the vertical methods.  Thus the horizontal version of the method with coefficients $g_{i1} < 1$ carries computational savings comparable to the vertical method. 

%

\subsubsection*{Mixed exponential-adaptive-Krylov.}
We can also use a combination of both vertical and horizontal Krylov adaptation to develop an EPIRK scheme.  The idea is to compute the last stage horizontally  and the internal stages vertically or with a combination of vertical and horizontal approaches depending on what yields the most optimized scheme.  This alleviates the drawbacks of strictly implementing vertical Krylov or horizontal Krylov for stiffly accurate EPIRK schemes and opens more possibilities for customization of methods as well as improving the efficiency of a particular scheme.  After solving the order conditions below, we will construct a specific mixed exponential-adaptive-Krylov, or exponential-Krylov, method that will serve as an illustrative example of this idea.

Note that any EPIRK method can be implemented in a vertical, horizontal or mixed way, however, the schemes can be constructed to be particularly optimized for a given implementation.  Thus, for example, we will call an EPIRK scheme optimized for a mixed implementation a mixed exponential-Krylov method but in the numerical tests section we test such method with either vertical, horizontal or mixed implementation and demonstrate that the integrator optimized for the mixed implementation and applied in this way is the most efficient.

 \subsection{Solving the order conditions} \label{subsec:solveOC}
Our primary objective is to construct new and more efficient stiffly accurate EPIRK schemes.  We focus on constructing three-stage methods. Below we solve the order conditions given in Table \ref{table:StiffOC} to obtain new general classes of three-stage fourth and fifth-order EPIRK methods.  For each of these classes, the remaining free-parameters are then chosen to obtain specific schemes each targeted for a specific version (vertical, horizontal or mixed) of the adaptive Krylov algorithm. 
We begin by considering a three-stage EPIRK method in EXPRB form (\ref{eqn:3stageEPIRKinEXPRBform})
\begin{equation}
\label{eqn:EPIRK_3stage2}
  \begin{aligned}
U_{n2}&= u_n+\alpha_{21}\psi_{21}(z)(g_{21}h_nJ_n)h_nf(u_n)\\
U_{n3}&= u_n+\alpha_{31}\psi_{31}(z)(g_{31}h_nJ_n)h_nf(u_n) + \underbrace{\alpha_{32}\psi_{32}(g_{32}h_nJ_n)}_{a_{32}(h_nJ_n)}h_nr(U_{n2})\\
u_{n+1} &= u_n + \varphi_1(z)(h_nJ_n)h_nf(u_n)+\underbrace{\left(\beta_2 \psi_{42}(g_{42}h_nJ_n)-2\beta_3\psi_{43}(g_{43}h_nJ)\right)}_{b_2(h_nJ_n)}h_nr(U_{n2})+\underbrace{\beta_3 \psi_{43}(h_ng_{43})}_{b_3(h_nJ_n)}h_n r(U_{n3})
  \end{aligned},
\end{equation} 
where $\psi_{i1}(z)=p_{i11}\varphi_1(z)+p_{i12}\varphi_2(z)+p_{i13}\varphi_3(z)$ for $i=2,3$. 

\subsubsection{Fourth-order methods}
 From the conditions given in Table \ref{table:StiffOC_EPIRK_EXPRB}, (C1) and (C2)  yield solutions for $b_2(z)$ and $b_3(z)$:
\begin{equation}\label{eqn:threeStage_bSols}
\begin{array}{l}
b_2(z)=\frac{2 g_{31} \P_{32}}{\alpha _{21}^2 \P_{21} \left(g_{31} \P_{21} \P_{32}-g_{21} \P_{22} \P_{31}\right)} \varphi _3(z)-\frac{3 \P_{31}}{\alpha _{21}^2 \P_{21} \left(g_{31} \P_{21} \P_{32}-g_{21} \P_{22} \P_{31}\right)} \varphi _4(z)\\
b_3(z)=\frac{2 g_{21} \P_{22}}{\alpha _{31}^2 \P_{31} \left(g_{21} \P_{22} \P_{31}-g_{31} \P_{21} \P_{32}\right)}\varphi _3(z) -\frac{3 \P_{21}}{\alpha _{31}^2 \P_{31} \left(g_{21} \P_{22} \P_{31}-g_{31} \P_{21} \P_{32}\right)}\varphi _4(z)
\end{array},
\end{equation}
with $\P_{ij}$'s defined by (\ref{eqn:Pcoeff}).  Upon substituting (\ref{eqn:threeStage_bSols}) into (C3) we obtain 
$$\frac{2  \left(\alpha _{31} g_{21} \P_{22} \P_{31}^2-\alpha _{21} g_{31} \P_{21}^2 \P_{32}\right)\varphi _3(z)+\left(3 \P_{21} \left(2 g_{31} \P_{32}-\alpha _{31} \P_{31}^2+\alpha _{21} \P_{21} \P_{31}\right)-6 g_{21} \P_{22} \P_{31}\right)\varphi _4(z) }{g_{21} \P_{22} \P_{31}-g_{31} \P_{21} \P_{32}}=0. $$This condition is satisfied by enforcing the coefficients of $\varphi_3$ and $\varphi_4$ to be zero.  The resulting equations can then be used to solve for 
\begin{equation}
\label{eqn:threeStage_alphaSols}
\alpha_{21}=\frac{2 g_{21} \P_{22}}{\P_{21}^2}\qquad \textrm{ and } \qquad \alpha_{31}=\frac{2 g_{31} \P_{32}}{\P_{31}^2}.
\end{equation}
With (\ref{eqn:threeStage_bSols}) and (\ref{eqn:threeStage_alphaSols}) satisfied, methods of the form (\ref{eqn:EPIRK_3stage2}) define a new class of stiffly accurate fourth-order three-stage methods. The flexibility with the remaining parameters makes these methods appealing.  For example, the choice $a_{32}(Z)\equiv 0$ and $\psi_{21}(Z)=\psi_{31}(Z)=\varphi_1(Z)$ yields the structure necessary in order to construct a vertical, horizontal, or a mixed exponential-Krylov method.  These methods will require three Krylov projections per time-step when implementing the vertical or horizontal whereas it is possible to construct a mixed exponential-Krylov scheme that only requires two projections.  The removal of a whole projection each time-step can significantly reduce the overall cost.  Another computation saving feature of the fourth-order schemes is the ability to choose both $g_{21}$ and $g_{31}$.  The choice $g_{21}=\frac{1}{2}$ and $g_{31}=\frac{2}{3}$ leads to the following three-stage fourth-order method \textit{EPIRK4s3A}:
\begin{equation}
\label{eqn:epirk4s3}
  \begin{aligned}
U_{n2}&= u_n+\frac{1}{2}\varphi_1(\frac{1}{2}h_nJ_n)h_nf(u_n)\\
U_{n3}&= u_n+\frac{2}{3}\varphi_1(\frac{2}{3}h_nJ_n)h_nf(u_n) \\
u_{n+1} &= u_n + \varphi_1(h_nJ_n)h_nf(u_n)+\left(32\varphi_3(h_nJ_n)-144\varphi_4(h_nJ_n)\right)h_nr(U_{n2})+\left(-\frac{27}{2}\varphi_3(h_nJ_n)+81\varphi_4(h_nJ_n)\right)h_n r(U_{n3})
  \end{aligned}.
\end{equation} 
Another fourth-order method can be obtained similarly by taking $\psi_{21}(Z)=\psi_{31}(Z)=\varphi_2(Z)$. Different $g$-coefficients were specified but were chosen so that they are comparable to those above to obtain the  \textit{EPIRK4s3B} method
\begin{equation}
\label{eqn:epirk4s3b}
  \begin{aligned}
U_{n2}&= u_n+\frac{2}{3}\varphi_2(\frac{1}{2}h_nJ_n)h_nf(u_n)\\
U_{n3}&= u_n+\varphi_2(\frac{3}{4}h_nJ_n)h_nf(u_n) \\
u_{n+1} &= u_n + \varphi_1(h_nJ_n)h_nf(u_n)+\left(54\varphi_3(h_nJ_n)-324\varphi_4(h_nJ_n)\right)h_nr(U_{n2})+\left(-16\varphi_3(h_nJ_n)+144\varphi_4(h_nJ_n)\right)h_n r(U_{n3}).
  \end{aligned}
\end{equation} 
Note that method {\it EPIRK4s3B} lies outside of the set of exponential Rosebrock methods because it is using $\varphi_2(z)$ function in the internal stages. In the numerical experiments section we will show the performance of {\it EPIRK4s3B} is very similar with {\it EPIRK4s3A}.  This illustrates that the EPIRK form allows for more flexibility in constructing the methods.

In Section \ref{sec:numExper} it will be shown that (\ref{eqn:epirk4s3}) performs particularly well when implemented in a horizontal or a mixed exponential-Krylov way.  The flexibility in constructing this fourth order method allows building a scheme that can even be computationally favorable compared to three-stage fifth order methods as shown below.

\subsubsection{Fifth-order methods}
Our construction of methods of order five are built upon the solutions obtained above which satisfy conditions (C1)-(C3).  A fifth-order method must additionally satisfy (C4)-(C8); upon inserting (\ref{eqn:threeStage_bSols}) it is found that there is no solution which is able to satisfy these conditions for all $Z\in \R^{n\times n}$.  However, as noted in \cite{luan_oster} for EXPRB methods, with additional regularity assumptions the convergence results hold under weaker assumptions on the coefficients of the method.  Similar results can be obtained for the general EPIRK form by considering a simplified set of conditions (C4*)-(C8*) given in Table \ref{table:simplified_StiffOC_EPIRK_EXPRB} that replace conditions (C4)-(C8). 
 The regularity assumption on operators in \eqref{eqn:jacobian} needed to prove convergence in this case is the same as for EXPRB methods but stated in a less compact form as follows. \\
\begin{assumption2}{4}
\label{assump:AdditionalRegularity}
The operator $L$ and the nonlinearity $N$ from \eqref{eqn:linearizedForm} are such that 
\begin{equation}\begin{array}{c}
L(\frac{\partial^2 \nt}{\partial y^2}(\ut)(\ut', \ut^{(3)})),\quad  L(\frac{\partial^2 \nt}{\partial y^2}(\ut)(\ut'', \ut'')),\quad L(\frac{\partial^3 \nt}{\partial u^3}(\ut)(\ut ',\ut ',\ut '')),\quad
 L \frac{\partial^4 \nt}{\partial u^4}(\ut)(\ut',\ut',\ut',\ut')\\
   L(\frac{\partial^2 \nt}{\partial u^2}(\ut)\left( \ut', \Psi_i(h \Jt)\frac{\partial^2 \nt}{\partial u^2}(\ut)(\ut', \ut')\right))
\end{array}
\end{equation}
are uniformly bounded on $\X$ for all $2\leq i\leq s$.  
\end{assumption2}
\noindent Given Assumption 4, we can now prove convergence of methods that satisfy conditions (C4*)-(C8*), if the stability requirement 4.16 in \cite{luan_oster} holds. The major aspects of the proof are exactly the same as in \cite{luan_oster} but several modifications are needed as we describe below. 
\begin{thm}[Theorem 4.2 \cite{luan_oster}]\label{thm:SimplifiedCondConvergentProof}
Let the initial value problem (\ref{eqn:ivp}) satisfy Assumptions 1,2, and 4.  Consider for its numerical solution an EPIRK method (\ref{eqn:EPIRK}) which satisfies Assumption 3, the order conditions (C1)-(C3) of Table \ref{table:StiffOC} and (C4*)-(C8*) of Table \ref{table:simplified_StiffOC_EPIRK_EXPRB}.  Then, under the stability assumption (4.16 \cite{luan_oster}), the method is convergent of order 5.  In particular, the numerical solution $u_n$ satisfies the error bound 
\begin{equation}
\label{eqn:fifthOrderErrorBound}
\norm{u_n-u(t_n)}\leq C\sum_{\nu =0}^{n-1} h_\nu^6
\end{equation}
uniformly on $t_0\leq t_n \leq T$. The constant $C$ is independent of the chosen step size sequence.
\end{thm}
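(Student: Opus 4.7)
The plan is to follow the roadmap of \cite{luan_oster} Theorem~4.2 verbatim, adapting it to the extra EPIRK flexibility in the same way Section~\ref{sec:convergence} handled Theorem~\ref{thm:convergenceEPIRK}: the global estimate \eqref{eqn:fifthOrderErrorBound} will follow from a local error bound $\|\tilde e_{n+1}\|\leq C h_n^6$ combined with the telescoping/stability argument of \cite{luan_oster} and the modified Lemma~4.5 (Appendix) that relies on Assumption~3. So the entire work is concentrated in sharpening the local error expansion \eqref{eqn:localError} to order six under the weakened algebraic constraints (C4*)--(C8*).

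First I would use conditions (C1)--(C3) to eliminate the $h_n^3$ and $h_n^4$ lines of \eqref{eqn:localError}; this part is identical to the order-four argument. The five remaining $h_n^5$ contributions each have the form
\[
 h_n^5\,\bigl(S_k(h_n\tilde J_n)-c_k\,\varphi_5(h_n\tilde J_n)\bigr)\, D_k,
\]
where $S_k(z)$ is the linear combination of $\varphi$-functions produced by the corresponding coefficient in \eqref{eqn:localError} (built from the $b_i(z)$ through \eqref{eqn:a_coefficients}) and $D_k$ is one of the five elementary differentials appearing in the last lines of \eqref{eqn:localError}. Conditions (C4*)--(C8*) are then exactly the scalar identities $S_k(0)=c_k\varphi_5(0)=c_k/5!$. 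Applying the recurrence \eqref{eqn:phiRecurrenceRelation} term by term (each $\varphi_j(z)-1/j!=z\varphi_{j+1}(z)$) lets me factor
\[
 S_k(z)-c_k\varphi_5(z) \;=\; z\,\widetilde S_k(z),
\]
with $\widetilde S_k$ again a linear combination of $\varphi$-functions; by Assumption~1 the operator $\widetilde S_k(h_n\tilde J_n)$ is uniformly bounded, and $\tilde J_n$ commutes with any analytic function of itself. Each of the surviving $h_n^5$ lines thereby reduces to
\[
 h_n^6\, \widetilde S_k(h_n\tilde J_n)\, \tilde J_n\, D_k .
\]

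To bound $\tilde J_n D_k$ I would split $\tilde J_n = L + \tfrac{\partial N}{\partial u}(\tilde u_n)$: the second summand is uniformly bounded on a tube around the exact solution by Assumption~2, and the first is controlled term-by-term by Assumption~4, which was stated precisely so as to cover each of the five elementary differentials $D_k$ that survive in \eqref{eqn:localError}. Combining this with the uniform bound on $\widetilde S_k(h_n\tilde J_n)$ gives $\|\tilde e_{n+1}\|\leq C h_n^6$ uniformly, and the proof of Theorem~\ref{thm:convergenceEPIRK} then supplies the global bound \eqref{eqn:fifthOrderErrorBound} with no further change. The main technical obstacle is the nested case (C8*), where the operator $\Psi_i(h_n\tilde J_n)$ sits inside an outer Fr\'echet derivative: one must verify both that the factorization $S_8(z)=z\widetilde S_8(z)$ actually yields a combination of $\varphi$-functions acting in the correct tensor slot and that the final line of Assumption~4 supplies the required bound for $L$ applied to the nested differential. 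Once this bookkeeping is done, the rest of the convergence argument transfers from \cite{luan_oster} mechanically.
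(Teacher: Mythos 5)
Your proposal matches the paper's own argument: the factorization $S_k(z)-c_k\varphi_5(z)=z\widetilde S_k(z)$ via the recurrence \eqref{eqn:phiRecurrenceRelation}, with the simplified conditions (C4*)--(C8*) killing the value at $z=0$, is exactly the paper's $\xi_{5,j}(h_n\Jt)=h_n\Jt\,\widehat{\xi}_{5,j}(h_n\Jt)$ step, and the extra Jacobian factor is then absorbed by Assumption~4 (plus Assumption~2 for the $\partial N/\partial u$ part of $\Jt$) to give $\tilde e_{n+1}=\O(h_n^6)$, after which the global bound follows from the unchanged stability/telescoping machinery of \cite{luan_oster}. This is essentially the same proof, with your explicit $\Jt=L+\frac{\partial N}{\partial u}(\ut)$ splitting and the remark on the nested $\Psi_i$ term in (C8*) merely making explicit what the paper leaves implicit.
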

\begin{proof}
   We begin by defining terms of $\O(h_n^5)$ in (\ref{eqn:localError}) by \begin{equation}\label{eqn:SimpCondTHMxiOperators}
\begin{aligned}
&\xi_{5,1}(z)=\sum_{i=2}^s b_{i}(z)g_{i1}^2\alpha_{i1}^2\P_{i1}\P_{i3}-4\varphi_5(z)\\
&\xi_{5,2}(z)= \sum_{i=2}^s b_{i}(z)\frac{1}{2!}\alpha_{i1}^2g_{i1}^2\P_{i2}^2-3\varphi_5(z)\\
&\xi_{5,3}(z)=\sum_{i=2}^s b_{i}(z)\frac{1}{2!}g_{i1}\alpha_{i1}^3\P_{i1}^2\P_{i2}-6\varphi_5(z)\\
&\xi_{5,4}(z)=\sum_{i=2}^s b_i(z)\frac{1}{4}\alpha_{i1}^4\P_{i1}^4-\varphi_{5}(z),
\end{aligned}
\end{equation}
each of which can be written in the following form
$$\xi_{5,j}(z)=\sum_{i=2}^s b_i(z)C_{1,j}-C_{2,j}\varphi_5(z)  $$
where $C_{1,j},C_{2,j}\in \R$ and $j=1,\dots,4$.  Using the recurrence relation (\ref{eqn:phiRecurrenceRelation}) we have for each $j=1,\dots,5$
\begin{eqnarray}
\xi_{5,j}(z)-\xi_{5,j}(0)&=&\sum_{i=2}^s \left(b_{i}(z)-b_{i}(0)\right)C_{1,j}+\left(\varphi_5(z)-\varphi_5(0)\right)C_{2,j}\nonumber\\
&=& \sum_{i=2}^s z \widehat{b}_{i}(z)C_{1,j}+z\varphi_6(z)C_{2,j}\label{eqn:SimpCondTHMrecurrence}\\
&= & z\left(\sum_{i=2}^s \widehat{b}_i(z)C_{1,j}+\varphi_6(z)C_{2,j}\right)\nonumber\\
&= & z \widehat{\xi}_{5,j}(z)\label{eqn:SimpCondTHMBoundedOperator}
\end{eqnarray}
where $\widehat{\xi}_{5,j}$ is a bounded operator. Since the simplified conditions (C4*)-(C8*) are satisfied, we have  $\xi_{5,j}(h_n\Jt)=0+h_n\Jt\wh{\xi}_{5,j}(h_n\Jt)$.  Substituting this back into (\ref{eqn:localError}) and using Assumption 4 yields each term to be of order $\O(h_n^6)$. Therefore our error $\tilde{e}_{n+1}=\O(h_n^6)$.
\end{proof}
\begin{table}
\begin{center}
\begin{tabular}{lll}
\hline 
Ref. label &\hspace{10pt} Order Condition & Order \\ 

\hline 
(C4*) & \hspace{10pt}$\sum_{i=2}^{s} b_i(0) g_{i1}^2 \alpha_{i1}^2\P_{i1}\P_{i3}=4\varphi_5(0)$ & 5\\ 

(C5*) &\hspace{10pt}$ \sum_{i=2}^{s} b_i(0)\alpha_{i1}^2 g_{i1}^2\P_{i2}^2 =3!\varphi_5(0)$ & 5\\ 

(C6*) & \hspace{10pt}$\sum_{i=2}^{s} b_i(0) g_{i1}\alpha_{i1}^3\P_{i1}^2\P_{i2} =12\varphi_5(0)$ & 5 \\ 

(C7*) & \hspace{10pt}$\sum_{i=2}^{s} b_i(0)\alpha_{i1}^4\P_{i1}^4=4!\varphi_5(0)$ & 5 \\ 

(C8*)& $\hspace{10pt} \sum_{i=2}^{s} b_i(0)\alpha_{i1}^2\P_{i1}K\Psi_i(Z)=0$ & 5 \\ 
\hline 
\end{tabular}  
\end{center}
\caption{Simplified stiff order conditions.  Note that $Z$ and $K$ are arbitrary square matrices, $\Psi_i$ is given by (\ref{eqn:BigPsi}),and $\psi_{3,i}(Z)=\sum_{j=2}^{i-1}a_{ij}(Z)\frac{c_j^2}{2}-c_{i}^3\varphi_3(c_i Z)$}
\label{table:simplified_StiffOC_EPIRK_EXPRB}
\end{table}

As a result of Theorem \ref{thm:SimplifiedCondConvergentProof}, a stiffly accurate three-stage fifth-order EPIRK scheme can be constructed. Let us consider the solutions obtained for the fourth-order schemes and (C4*)-(C8*). We first note that the simplified conditions (C6*) and (C7*) become equivalent and are used to solve for $g_{21}$:
\begin{equation}
\label{eqn:threeStageG21sol} 
g_{21}=\frac{3 \P_{21} \left(2 \P_{31}-5 g_{31} \P_{32}\right)}{5 \P_{22} \left(3 \P_{31}-8 g_{31} \P_{32}\right)}.
\end{equation}  
Substituting (\ref{eqn:threeStageG21sol}) into simplified conditions (C4*) \& (C5*) and using Mathematica software we found that there is only one solution that yields desirable coefficients (i.e. $g_{ij}\in(0,1]$ and $\P_{ij}\in \R$) and is able to satisfy the remaining conditions
\begin{eqnarray}
\; \P_{21}=2 \P_{22},\;
 \P_{32}= -\frac{\P_{31}}{2},
 \;\P_{33}=\frac{\P_{31} \left(80 g_{31}^2 \P_{22}-225 g_{31}^2 \P_{23}+120 g_{31} \P_{22}-360 g_{31} \P_{23}+48 \P_{22}-144 \P_{23}\right)}{30 g_{31}^2 \P_{22}} \label{eqn:threeStageLargePsol1}
\end{eqnarray}
Expressions for $p_{ijk}$ coefficients are then found by replacing $\P_{ij}$'s in (\ref{eqn:threeStageLargePsol1}) with (\ref{eqn:Pcoeff}), resulting in two possibilities
\begin{eqnarray}
&p_{211}=-\frac{p_{212} \left(225 g_{31}^2 p_{311}+40 g_{31}^2 p_{312}-360 g_{31} p_{311}-60 g_{31} p_{312}+144 p_{311}+24 p_{312}\right)}{15 g_{31}^2 p_{312}},p_{213}= -2 p_{212},p_{313}=2 p_{312} \text{ or }\label{eqn:smallPexpression1}\\
&p_{212}= 0,p_{213}=0,p_{312}= 0,p_{313}= 0\label{eqn:smallPexpression2}.
\end{eqnarray}
Expressions in (\ref{eqn:smallPexpression1}) give rise to the use of general $\psi_{i1}$ functions and the ability to construct horizontal exponential-Krylov methods.  The coefficients given by (\ref{eqn:smallPexpression2}) simplify (\ref{eqn:EPIRK_3stage2}) by setting $\psi_{i1}(z)=\varphi_1(z)$.  This simplification condition provides these methods with the structure necessary for construction of a mixed exponential-Krylov scheme. For this reason we consider each case separately and construct methods specifically designed for the two different approaches - horizontal and mixed. 

Beginning with (\ref{eqn:smallPexpression1}), the remaining condition (C8*) is
\begin{equation}
\label{eqn:threestageC8weak}
b_2(0)\alpha_{21}^2\P_{21}\cdot \Psi_2(Z)+b_3(0)\alpha_{31}^2\P_{31}\cdot \Psi_{3}(Z)=0,
\end{equation}
which is satisfied by
\begin{equation}
a_{32}=\frac{2 g_{21} g_{31} p_{211} P_{32} \left(8 g_{31} P_{32}-3 P_{31}\right) \phi _{3,g_{21}}}{\alpha _{21} P_{22} P_{31}^2 \left(3 P_{21}-8 g_{21} P_{22}\right)}+\frac{2 g_{31} \phi _{3,g_{31}} \left(3 \alpha _{31} g_{31} p_{311} P_{21} P_{22} P_{31}^2-8 \alpha _{31} g_{21} g_{31} p_{311} P_{22}^2 P_{31}^2\right)}{\alpha _{21}^2 P_{21}^2 P_{22} P_{31}^2 \left(3 P_{21}-8 g_{21} P_{22}\right)}
\label{eqn:a32generalSolThreestage}
\end{equation}
\begin{table}
\begin{center}
\begin{tabular}{lll}
\hline 
Ref. label &\hspace{10pt} Order Condition & Order \\ 

\hline 
C1 &\hspace{10pt} $\sum_{i=2}^{s} b_i(Z)\alpha_{i1}^2\P_{i1}^2 =2\varphi_3(Z)$& 3 \\ 
\hline 
C2 &\hspace{10pt} $\sum_{i=2}^{s} b_i(Z)\alpha_{i1}^2g_{i1}\P_{i1}\P_{i2}=3\varphi_4(Z)$ & 4 \\ 

C3 &\hspace{10pt} $\sum_{i=2}^{s} b_i(Z)\alpha_{i1}^2\mathbb{P}_{i1}^3=3!\varphi_4(Z)$& 4 \\ 
\hline 
C4 &\hspace{10pt} $\sum_{i=2}^{s} b_i(Z) g_{i1}^2 \alpha_{i1}^2\P_{i1}\P_{i3}=4\varphi_5(Z)$ & 5\\ 

C5 & \hspace{10pt}$ \sum_{i=2}^{s} b_i(Z)\alpha_{i1}^2 g_{i1}^2\P_{i2}^2 =3!\varphi_5(Z)$ & 5\\ 

C6 &\hspace{10pt} $\sum_{i=2}^{s} b_i(Z) g_{i1}\alpha_{i1}^3\P_{i1}^2\P_{i2} =12\varphi_5(Z)$ & 5 \\ 

C7 &\hspace{10pt} $\sum_{i=2}^{s} b_i(Z)\alpha_{i1}^4\P_{i1}^4=4!\varphi_5(Z)$ & 5 \\ 
\hline 
C8& \hspace{10pt}$ \sum_{i=2}^{s} b_i(Z)\alpha_{i1}^2\P_{i1}K\Psi_i(Z)=0$ & 5 \\ 
\hline 
\end{tabular}  
\begin{tabular}{lll}
\hline 
Ref. label  & \hspace{10pt}Order Condition & Order \\ 

\hline 
C1$^\prime$ & $\sum_{i=2}^{s} b_i(Z)c_{i}^2 =2!\varphi_3(Z)$& 3 \\ 
\hline 
C2$^\prime$  & $\sum_{i=2}^{s}b_i(Z)c_{i}^3=3!\varphi_4(Z)$ & 4 \\ 

 &  & \\
\hline 
C3$^\prime$ & $\sum_{i=2}^{s} b_i(Z)c_{i}^4 =4!\varphi_5(Z)$ & 5\\ 
& & \\
& & \\ 
& & \\ \hline
C4$^\prime$& $\sum_{i=2}^{s} c_{i}b_{i}(Z)K\psi_{3,i}(Z) =0$ & 5 \\ 
\hline 
\end{tabular} 
\end{center}
\caption{Stiff order conditions for EPIRK methods and EXPRB.  Note that $Z$ and $K$ are arbitrary square matrices, $\Psi_i$ is given by (\ref{eqn:BigPsi}),and $\psi_{3i}(Z)=\sum_{j=2}^{i-1}a_{ij}(Z)\frac{c_j^2}{2}-c_{i}^3\varphi_3(c_i Z)$}
\label{table:StiffOC_EPIRK_EXPRB}
\end{table}	

\subsubsection*{Horizontal exponential-Krylov methods.}
After substituting (\ref{eqn:threeStageLargePsol1}) and (\ref{eqn:smallPexpression1}) into (\ref{eqn:a32generalSolThreestage}), we have 
\begin{eqnarray}
 a_{32}(Z)&=\frac{100 \left(3-4 g_{31}\right){}^2 g_{31}^3 p_{311} }{3 \left(4-5 g_{31}\right){}^2 \left(6 p_{311}+p_{312}\right)}\varphi _{3}(g_{31}Z)+\frac{20 \left(3-4 g_{31}\right){}^2 g_{31} \left(5 g_{31}^2 \left(45 p_{311}+8 p_{312}\right)-60 g_{31} \left(6 p_{311}+p_{312}\right)+24 \left(6 p_{311}+p_{312}\right)\right) }{9 \left(4-5 g_{31}\right){}^2 \left(6 p_{311}+p_{312}\right)} \varphi_{3}(g_{21}Z).\label{eqn:threeStage_a32}
\end{eqnarray}
Since $a_{32}(Z)$ is a function of both $g_{31}$ and $g_{21}$, a horizontal exponential-Krylov method is not possible unless one of the terms vanish (since $g_{21}\neq g_{31}$). We thus introduce a new condition by setting one of these terms to zero.  The first term in the second internal stage already uses $g_{31}$ in the evaluation of $\varphi_1(Z)$ and therefore we seek removing the term associated with $\varphi_3(g_{21}Z)$ in (\ref{eqn:threeStage_a32}).   With the use of Mathematica, only one solution was found which did not violate any of the conditions or specifications,
$$p_{312}=-\frac{9 \left(5 g_{31}-4\right){}^2 p_{311}}{4 \left(10 g_{31}^2-15 g_{31}+6\right)}.$$
With all conditions satisfied, the remaining parameters $g_{31},p_{212},$ and $p_{311}$ can be chosen freely.  In order to optimize our $g$-coefficients, we use (\ref{eqn:threeStageG21sol}) to help choose $g_{31}$.  The relationship (\ref{eqn:threeStageG21sol}) can be reduced under the conditions for a fifth-order method to $$ g_{21}=\frac{3 \left(5 g_{31}-4\right)}{5 \left(4 g_{31}-3\right)}.$$
From the plot of this relationship in Figure \ref{fig:g21g31} it can be seen that minimizing either $g_{21}$ or $g_{31}$ results in the other coefficient approaching eight tenths.  Furthermore, Figure \ref{fig:g21g31} shows that minimizing one of these coefficients is the best choice computationally.
 \begin{figure}[hbtp]
 \begin{center}
     \includegraphics[scale=0.4]{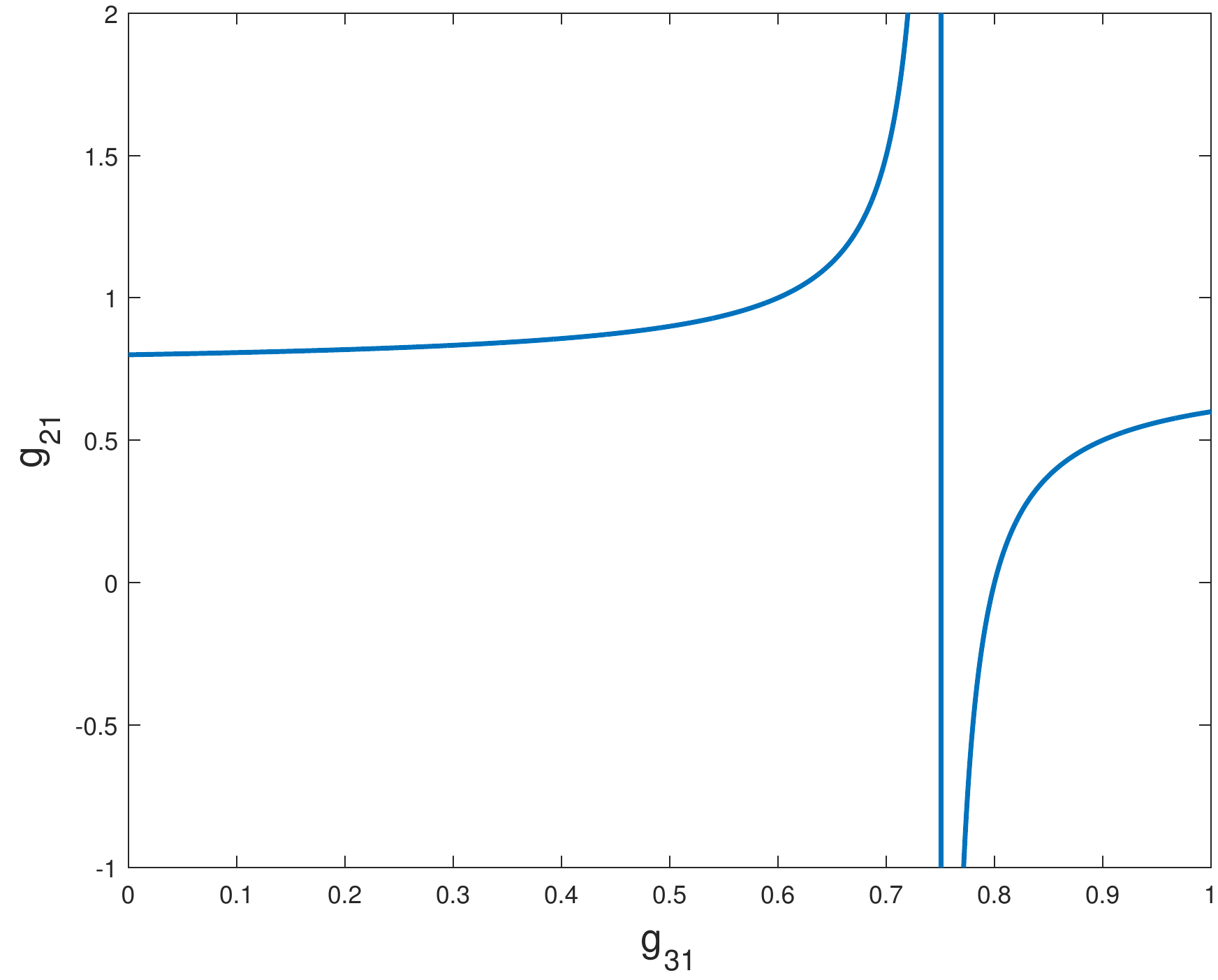}
     \end{center}
    \caption{$g_{21}$ and $g_{31}$}
    \label{fig:g21g31}
    \end{figure}
Thus let us specify $g_{31}=4/9$, $p_{212}=1$ and $p_{311}=1$ to obtain our first horizontal exponential-Krylov friendly method \textit{EPIRK5s3}-Horz:
\begin{equation}
\label{eqn:epirk5s3}
  \begin{aligned}
&U_{n2}= u_n+\frac{288}{55}\left(\varphi_2(\frac{48}{55}h_nJ_n)-2\varphi_3(\frac{48}{55}h_nJ_n)\right)h_nf(u_n)\\
&U_{n3}= u_n+\frac{212}{45}\left(\varphi_1(\frac{4}{9}h_nJ_n)-\frac{288}{53}\varphi_2(\frac{4}{9}h_nJ_n)+\frac{576}{53}\varphi_3(\frac{4}{9}h_nJ_n)\right)h_nf(u_n)+\frac{32065}{13122}\varphi_3(\frac{4}{9}h_nJ_n)h_nr(U_{n2}) \\
&u_{n+1} = u_n + \varphi_1(h_nJ_n)h_nf(u_n)+\left(-\frac{166375}{61056}\varphi_3(h_nJ_n)+\frac{499125}{27136}\varphi_4(h_nJ_n)\right)h_nr(U_{n2})\\
&\qquad +\left(\frac{2187}{106}\varphi_3(h_nJ_n)-\frac{2187}{106}\varphi_4(h_nJ_n)\right)h_n r(U_{n3})
  \end{aligned},
\end{equation}

\subsubsection*{Mixed exponential-Krylov methods.}

For a mixed exponential-Krylov method, we propose using vertical exponential-Krylov approach to compute the internal stages and horizontal exponential-Krylov method for the last stage (see Table \ref{table:EXPRB53s3}). For a three-stage method, this type of a mixed exponential-Krylov method requires $\Psi_{21}(z)=\Psi_{31}(z)=\varphi_k(z)$  for some fixed $k\in \N$.  The simplification from coefficients (\ref{eqn:smallPexpression2}) satisfies this requirement with $k=1$.  Therefore we obtain a three-stage fifth-order mixed exponential-Krylov method by additionally satisfying
$$a_{32}(Z)=\frac{2 \alpha _{31} g_{31}^2 p_{311} \varphi _{3}(g_{31}Z)}{\alpha _{21}^2 p_{211}^2}+\frac{2 \left(20 g_{31}^2-31 g_{31}+12\right) g_{31} \varphi _{3}(g_{21}Z)}{\alpha _{21} p_{211}} .$$
A further result from this simplification is that a stiffly accurate fifth-order three-stage EPIRK method is also a fifth-order three-stage EXPRB method.  Thus any three-stage EXPRB scheme is of a mixed exponential-Krylov type.  As an example and  for our numerical experiments we will consider a fifth-order three-stage method from \cite{luan_oster},  \textit{EXPRB53s3} (Table \ref{table:EXPRB53s3}).
  
\begin{table}[h]
\centering
\begin{tabular}{llll|llll}
\cline{5-5}
$U_{n2}$        & = & $u_n$ & + & \multicolumn{1}{l|}{$\frac{1}{2} \varphi_1(\frac{1}{2}h_nJ_n)h_nf(u_n)$ } &                        &                         &                        \\
       & &  &  & \multicolumn{1}{l|}{ } &                        &                         &                         \\ \cline{7-7}
$U_{n3}$        & = & $u_n$ & + & \multicolumn{1}{l|}{$\frac{9}{10}\varphi_1(\frac{9}{10} h_nJ_n)h_nf(u_n) $} & \multicolumn{1}{l|}{+} & \multicolumn{1}{l|}{$\left(\frac{27}{25}\varphi_3(\frac{1}{2}h_nJ_n)+\frac{729}{125}\varphi_3(\frac{9}{10}h_nJ_n)\right)h_nr(U_{n2})$} &              \\ \cline{5-5} \cline{7-7}          \\ \cline{5-8} 
$u_{n+1}$ & = & $u_n$ & + & \multicolumn{4}{l|}{$\varphi_1(h_nJ_n)h_nf(u_n) + \left(18\varphi_3(h_nJ_n)-60\varphi_4(h_nJ_n)\right)h_nr(U_{n2})+\left(-\frac{250}{81}\varphi_3(h_nJ_n)+\frac{500}{27}\varphi_4(h_nJ_n)\right) h_nr(U_{n3})$}    \\\cline{5-8}
\end{tabular}
\caption{{\it EXPRB53s3}  and grouping of terms for mixed exponential-adaptive-Krylov}
\label{table:EXPRB53s3}
\end{table}

 \subsection{Variable time-stepping}\label{subsec:variabletimestep}

Variable time-stepping has been used with both the vertical implementation of EPIRK and EXPRB methods in \cite{Tokman2006} and \cite{erow4} respectively.  For both of these classes of methods the approach to implementing an efficient variable step-size mechanism was to embed a lower-order error estimator into a higher-order method in such a way that both rely on the same internal stages and do not require additional Krylov projections per time step.  While this is possible for vertical Krylov implementation, the horizontal and mixed implementations are limited by computing the final stage horizontally where one Krylov projection is used for its approximation and accounts for the specific coefficients of that stage. Thus, the implementation of variable time-stepping in this manner will require an extra Krylov projection each time-step in order to calculate the error estimator.  To further reduce computational cost of the horizontal and mixed implementations with variable time-stepping the adaptive Krylov algorithm has to be modified. This is the goal of our current research but in this paper we restrict our attention to the existing adaptive Krylov algorithm as in \cite{niesenwright}.

Since the horizontal and vertical implementations require the same number of projections per time-step, the cost of an additional projection can offset any computational gains from optimized $g$-coefficients in the horizontal implementation.  However, the mixed implementation of methods like (\ref{eqn:epirk4s3}) requires fewer projections each time-step than the vertical implementation of a method with the same number of stages.  Therefore the extra Krylov evaluation for the mixed implementation would still be competitive with the vertical implementation due to now having the same number of projections each time-step.  As an example we can embed the following third-order method into {\it EPIRK4s3}
\begin{equation}
\label{eqn:embedded3rdOrderMethod1}
\widehat{u}_{n+1}=u_n + \varphi_1(h_nJ_n)h_nf(u_n)+8\varphi_3(h_nJ_n)h_nr(U_{n2})
\end{equation}
and use it as our error estimator for both vertical and mixed implementations. 

To efficiently approximate (\ref{eqn:embedded3rdOrderMethod1}), the vertical method needs to use the same Krylov bases that are computed each time-step for (\ref{eqn:epirk4s3}).  In \cite{TokmanTranquilli} methods were restricted to using single $\varphi$-functions for terms who shared the same vector.  By modifying the implementation we can account for multiple $\varphi$-functions and approximate the terms  $$\left(32\varphi_3(h_nJ_n)-144\varphi_4(h_nJ_n)\right)h_nr(U_{n2})\quad \textrm{ and } \quad 8\varphi_3(h_nJ_n)h_nr(U_{n2})$$
in \eqref{eqn:epirk4s3} and \eqref{eqn:embedded3rdOrderMethod1} using the same Krylov basis.  After computation of the Krylov basis for $\varphi_4(h_nJ_n)h_nr(U_{n2})$, an approximation of $\varphi_3(h_nJ_n)h_nr(U_{n2})$ can then be obtained by using the recurrence relation $\varphi_{k}(z)=z\varphi_{k+1}(z)+1/k!$.

\section{ Numerical Experiments}\label{sec:numExper}
The numerical experiments presented below are designed to address several objectives. First, we want to demonstrate the performance of the new stiffly accurate EPIRK schemes. Second, we will confirm our claim that implementing the horizontal and/or mixed exponential-adaptive-Krylov algorithm for stiffly accurate methods can offer significant computational savings. Third, we examine the accuracy of the integrators on problems which do not satisfy Assumptions 1 and 2. Finally, we conclude the section with tests that illustrate the performance of the variable time stepping version of the methods. 
		
The integrators employed in our numerical experiments are: {\it EPIRK4s3A}  (\ref{eqn:epirk4s3}), {\it EPIRK4s3B} (\ref{eqn:epirk4s3b}), {\it EPIRK5s3} (\ref{eqn:epirk5s3}), {\it EXPRB53s3} (Table \ref{table:EXPRB53s3}) and one classically (non-stiff) derived method {\it EPIRK5P1} from \cite{TokmanTranquilli}.  Each of the stiffly accurate methods will be implemented in its vertical, horizontal and mixed exponential-Krylov versions as follows:  
\begin{itemize}
	\item {\it EPIRK4s3A}  - vertical, horizontal, and mixed; these three implementations of the same fourth-order method demonstrate the advantages of mixed or horizontal forms;
	\item {\it EPIRK4s3B} - mixed; this is an EPIRK method that cannot be written in the exponential Rosenbrock form;
	\item {\it EPIRK5s3} - horizontal;  this fifth-order method has been derived specifically to take advantage of the horizontal form;
	\item {\it EXPRB5s3} - vertical and mixed; this fifth-order method has been designed to take advantage of a mixed form.
\end{itemize} 
The classically derived {\it EPIRK5P1} method has been included to illustrate the difference in performance compared to the stiffly accurate and particular implementation adapted schemes. 

We begin by describing the test problems  that we will be using and verifying the theoretically predicted order of all the integrators used in our experiments.  The simulations and results are then detailed.

\subsection{Test problems}\label{subsec:testproblems}
Our numerical experiments are conducted on a select subset of the test problems used in \cite{Loffeld}.  For each of these test problems, numerical comparisons between previously derived exponential schemes not included here can be found in \cite{Tokman,Loffeld,Hoch,luan_oster}.  In all of the problems presented below the $\nabla^2$ term is discretized using the standard second order finite differences. \\

            \noindent \emph{Allen-Cahn 2D.} Two-dimensional Allen-Cahn equation \cite{bates}:
            \begin{eqnarray*}
                u_t &=& \alpha\nabla^2u + u - u^3, \mbox{ }x,y \in [-1,1], t \in [0,1.0]
            \end{eqnarray*}
            with $\alpha = 0.1$, using no-flow boundary conditions and initial conditions given by $u = 0.1 + 0.1\cos(2\pi x)\cos(2\pi y)$. \\

            \noindent \emph{ADR 2D.} Two-dimensional advection-diffusion-reaction equation \cite{caliariostermann}:
            \begin{eqnarray*}
                u_t &=& \epsilon(u_{xx} + u_{yy}) - \alpha(u_x + u_y) + \gamma u(u - \tfrac{1}{2})(1 - u), \mbox{ }x,y \in [0,1], t \in [0,0.1],
            \end{eqnarray*}
            where $\epsilon = 1/100$, $\alpha = -10$, and $\gamma = 100$.  Homogeneous Neumann boundary conditions were used and the initial conditions were given by $u = 256(xy(1-x)(1-y))^2 + 0.3$. \\
            
            \noindent \emph{Brusselator 2D.} Two-dimensional Brusselator \cite{hairer2, brusselator}
        \begin{eqnarray*}
            u_t &=& 1 + u^2v - 4u + \alpha\nabla^2u, \mbox{ }x,y \in [0,1] \\
            v_t &=& 3u - u^2v + \alpha\nabla^2v \\
            \alpha &=& 0.02
        \end{eqnarray*}
        with homogeneous Neumann boundary conditions, $t\in [0,1]$, and initial values
        \begin{eqnarray*}
            u &=& 2+0.25y  \\
            v &=&  1+0.8x
        \end{eqnarray*}            
            
%

       \noindent \emph{Gray-Scott 2D.} Two-dimensional Gray-Scott \cite{grayscott} with periodic boundary conditions:
        \begin{eqnarray*}
            u_t &=& d_u\nabla^2u - uv^2 + a(1 - u), \mbox{ }x,y \in [0,1] \\
            v_t &=& d_v\nabla^2v + uv^2 - (a + b)v, 
        \end{eqnarray*}
       and $        d_u = 0.2,   d_v= 0.1,a= 0.04,  b = 0.06$. Initial conditions given by
        \begin{eqnarray*}
            u &=& 1 - e^{-150 \lbrack (x - \frac{1}{2})^2 + (y - \frac{1}{2})^2 \rbrack}, \\
            v &=& e^{-150 \lbrack (x - \frac{1}{2})^2 + 2(y - \frac{1}{2})^2 \rbrack}.
        \end{eqnarray*}

\noindent \emph{1D Semilinear parabolic.} One-dimensional semilinear parabolic problem \cite{Hoch}:
					\begin{equation*}
					\frac{\partial U}{\partial t}(x,t)-\frac{\partial^2 U}{\partial x^2}(x,t)=\int_0^1 U(x,t)dx + \Phi(x,t)
					\end{equation*}		
		  with homogeneous Dirichlet boundary conditions and for $x\in [0,1]$ and $t\in [0,1]$.  The source function $\Phi$ is chosen such that $U(x,t)=x(1-x)e^t$ is the exact solution.

\subsection{Verification of accuracy}
		The implementation of the exponential integrators was done in MATLAB.  For all the integrators, the adaptive Krylov algorithm as described in \cite{tokmanadapt} was used to compute products of matrix $\varphi$-functions and vectors.  To verify the order of the schemes, we set the tolerance to $10^{-14}$ so that the Krylov approximation error has a minimal effect on the time stepping error.  A reference solution was computed using MATLAB's $\textit{ode15s}$ integrator with absolute and relative tolerances set to $10^{-14}$ and the error was defined as the discrete infinity (maximum) norm of the difference between the computed solution and this approximation.

Figure \ref{fig:loglog1} shows the achieved order of all the methods we are considering.  For convenience we included lines (without markers) of slope three (dashed), four (dash-dotted), and five (dotted). It can be seen from Figure \ref{fig:loglog1} that the predicted order of each stiffly accurate method is attained whereas the classically derived EPIRK5P1 achieves fifth-order for all but one exception, semilinear parabolic problem (Figure \ref{fig:loglog1}(c)) where a significant order reduction is observed. Note that there is not guarantee of order reduction if a method is derived classically and does not satisfy the stiff order conditions. Whether order reduction is observed depends on the specifics of the differential operator of a given problem.  As Figure \ref{fig:loglog1} illustrates it is possible that a classical method does not exhibit order reduction even if a problem is stiff.  Figure \ref{fig:loglog1} also confirms that the same numerical solution is obtained for each version of the exponential-Krylov implementation.  

\begin{figure}[hbtp]
\begin{center}
   \subfigure[2D ADR $(N=400^2$)]{\includegraphics[scale = 0.4]{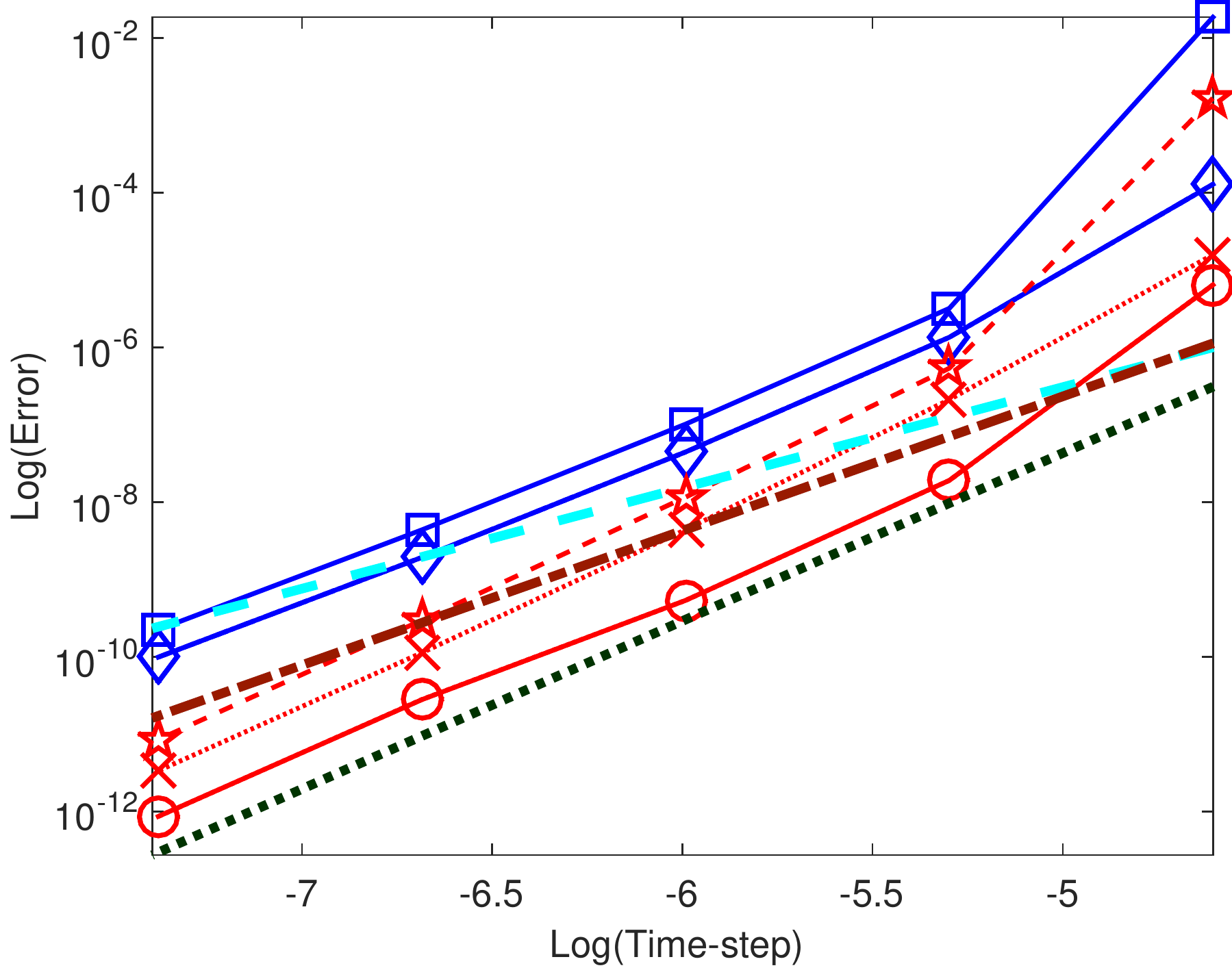}}  \hspace{10pt}
    \subfigure[2D Allen-Cahn $(N=500^2)$]{\includegraphics[scale = 0.4]{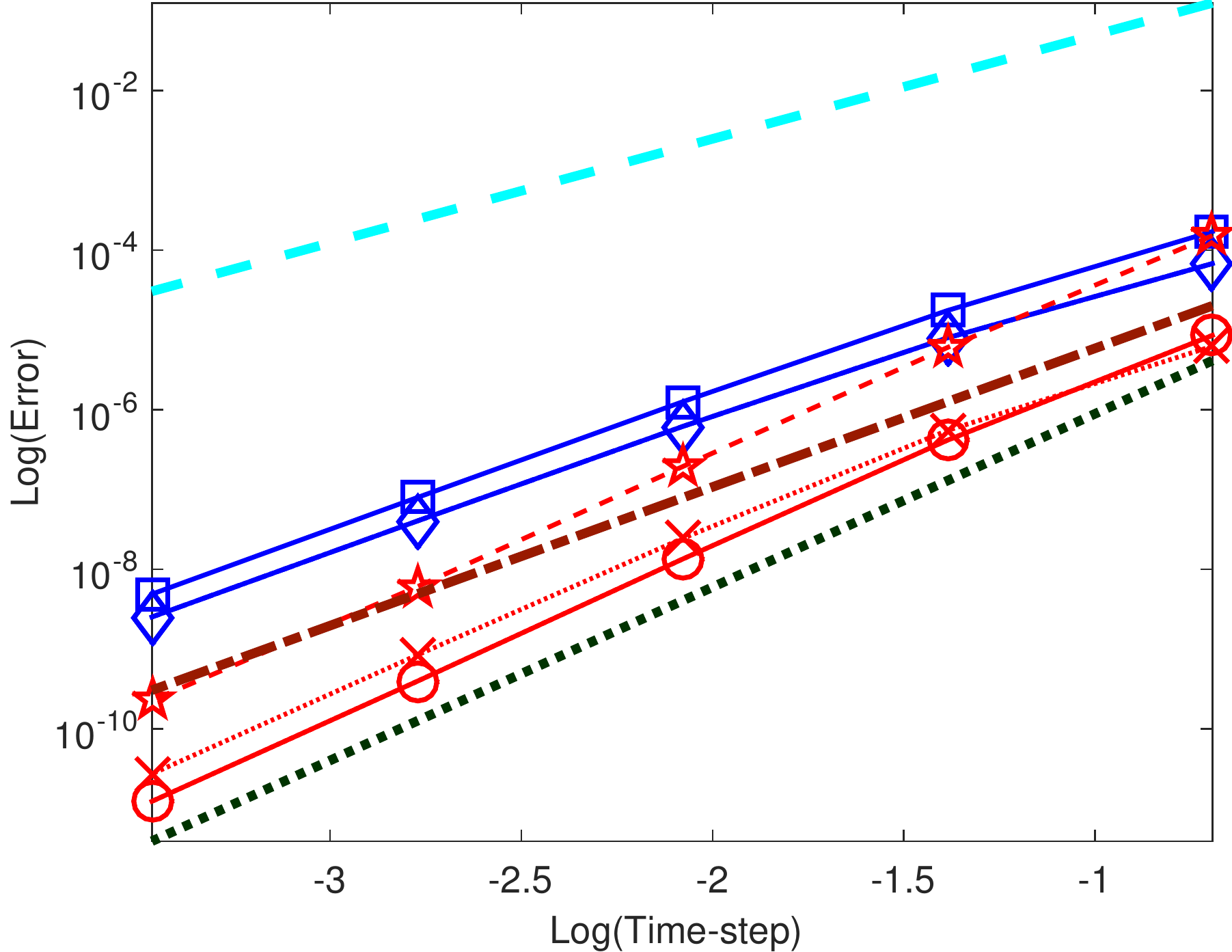}}\\
   \subfigure[1D Semilinear Parabolic N=1000]{$\begin{array}{c}\includegraphics[scale = 0.4]{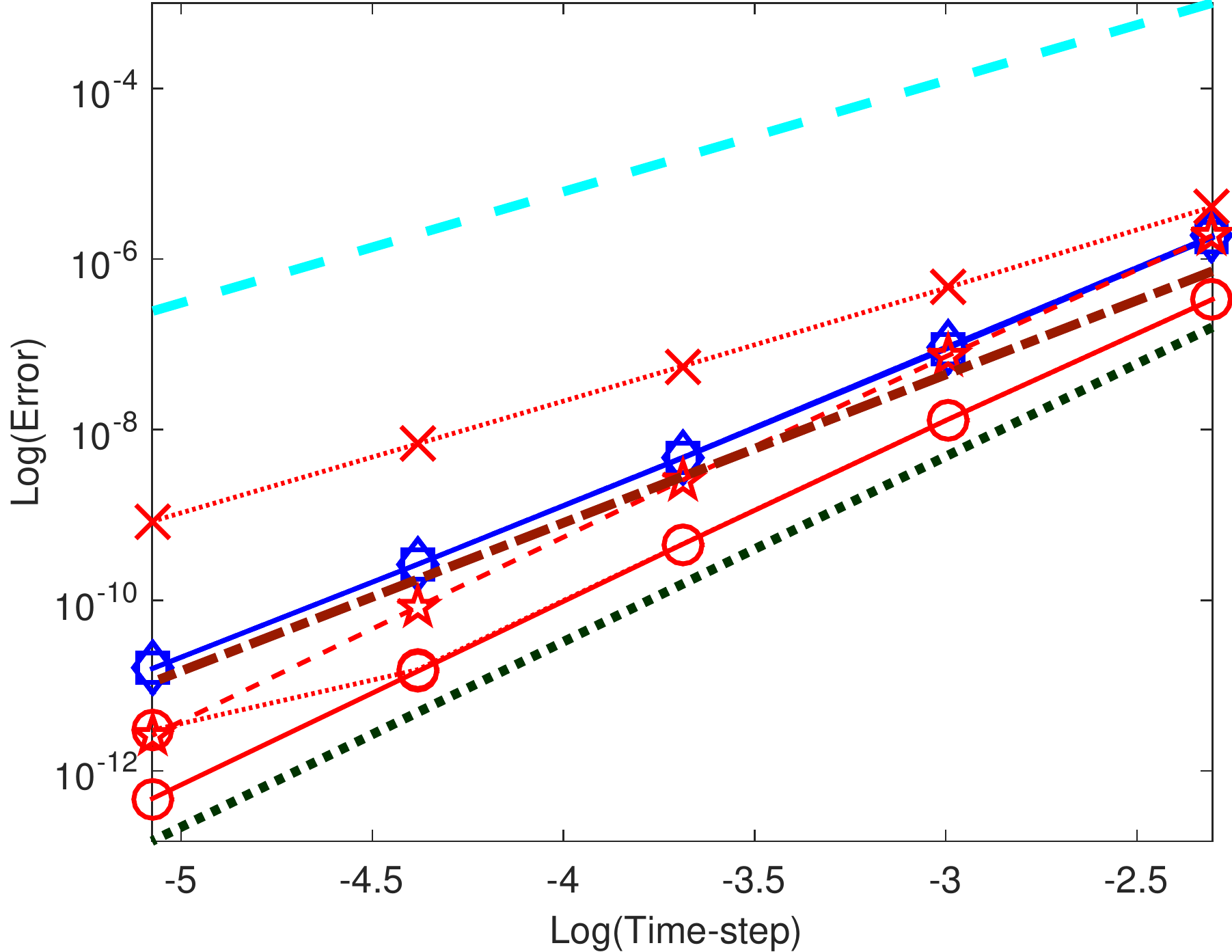}\\ \end{array}$}  \hspace{10pt} 
   \subfigure{$\begin{array}{c} {\includegraphics[scale = 0.2]{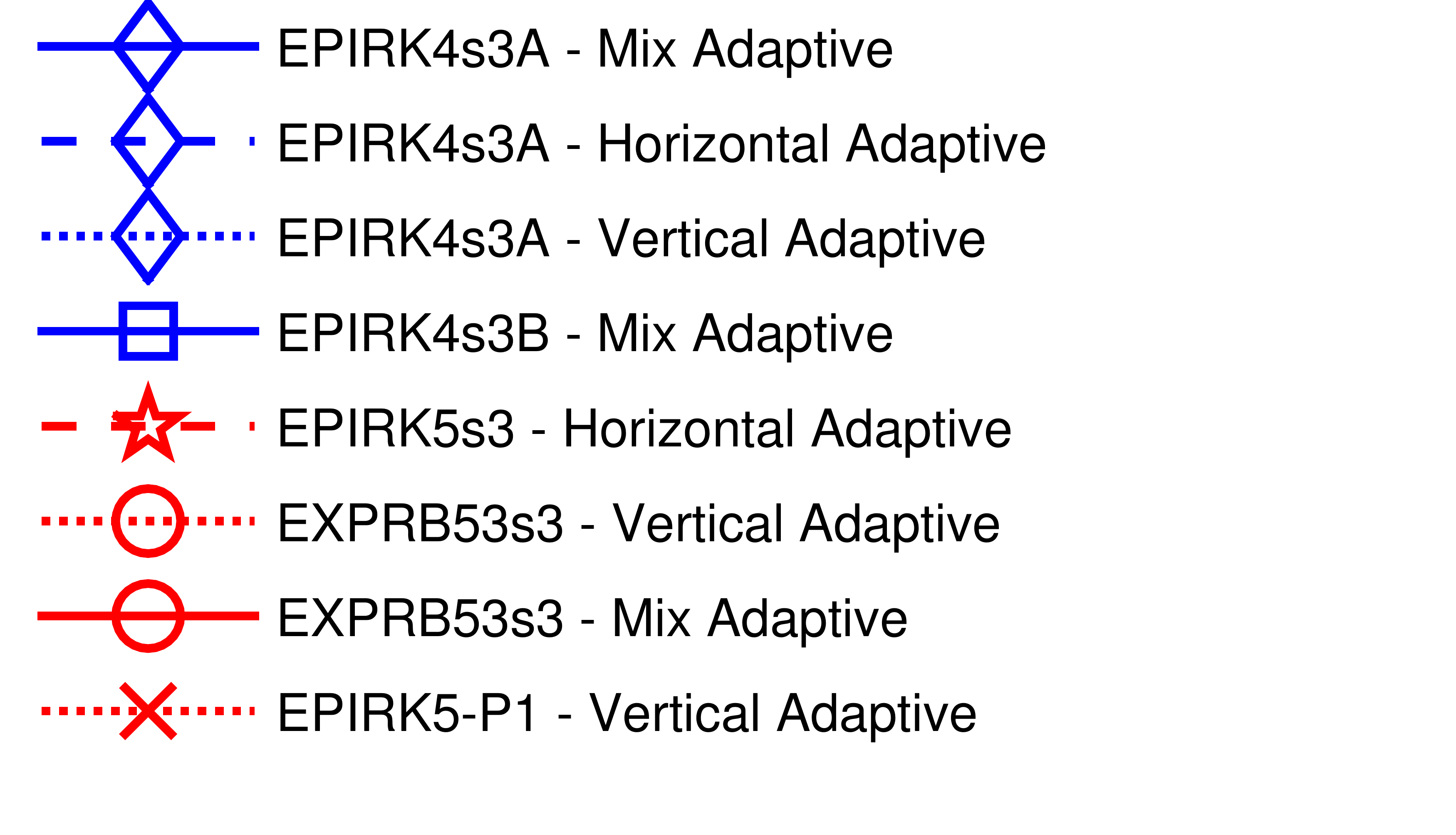}}\\ \textrm{  }\end{array}$ }  \\
  		\addtocounter{subfigure}{-1}
    \subfigure[2D Gray-Scott $(N=400^2)$]{\includegraphics[scale = 0.4]{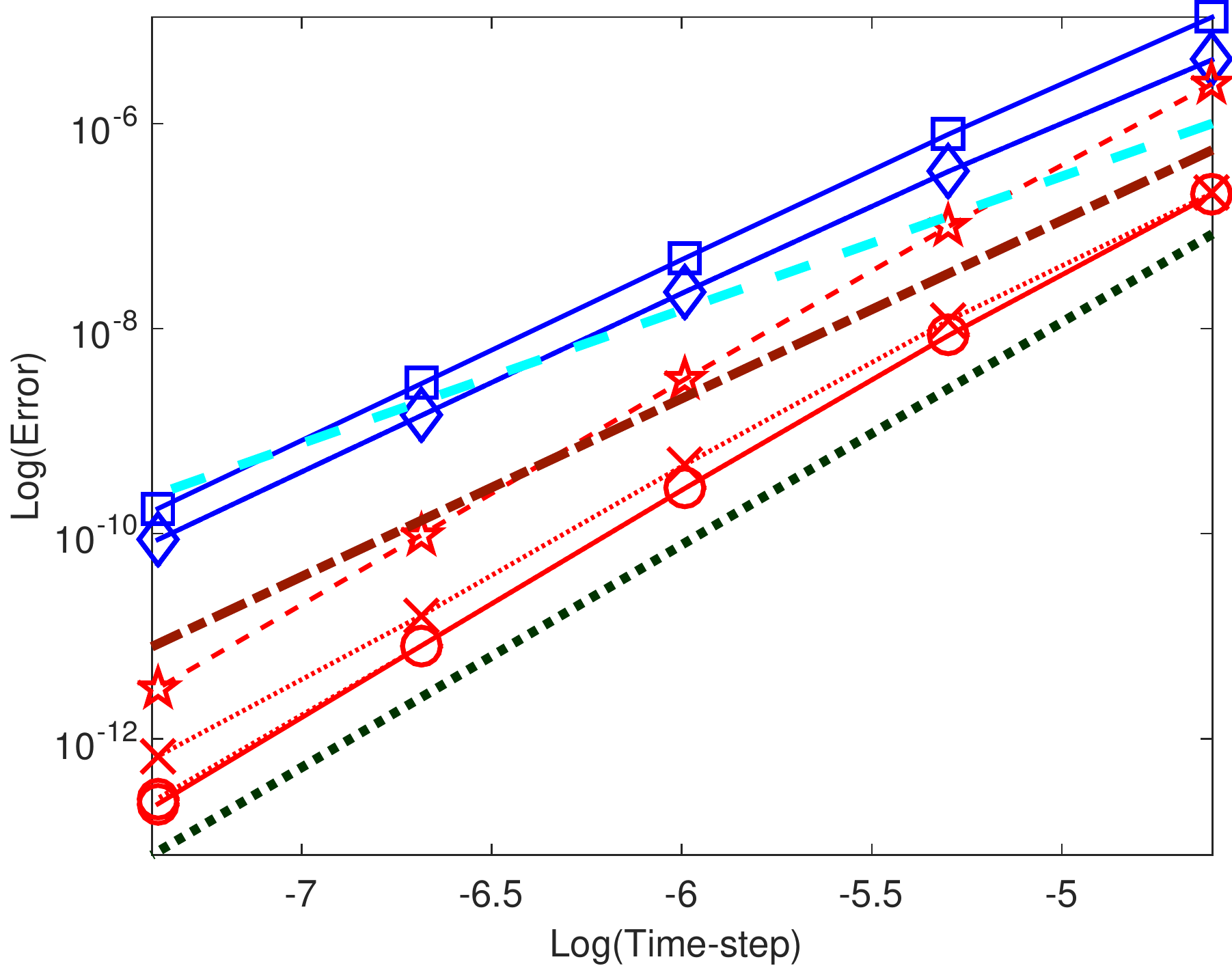}}\hspace{10pt}
    \subfigure[2D Brusselator $(N=300^2)$]{\includegraphics[scale = 0.4]{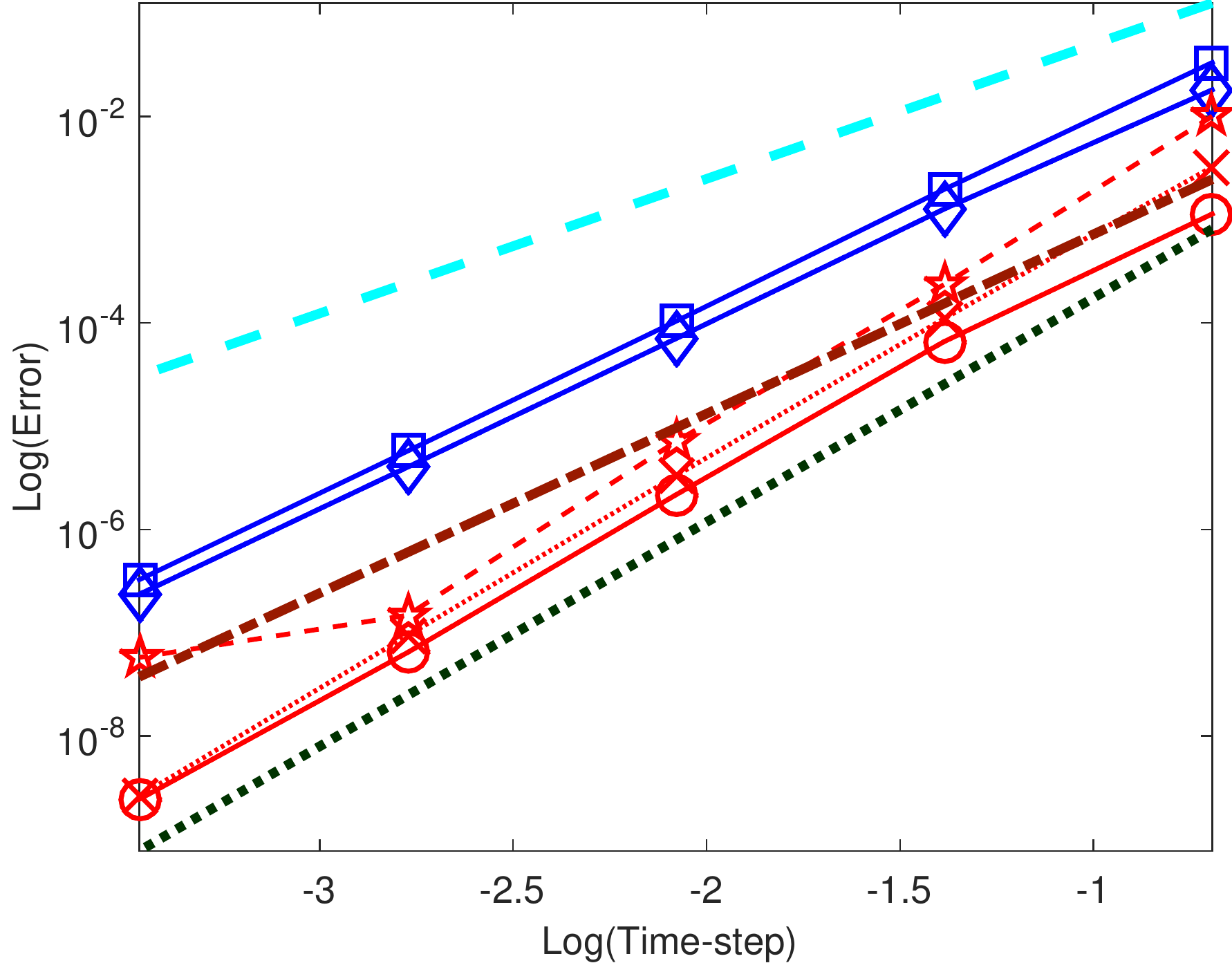}}
    \end{center}
    \caption{Log-log plots of error vs. time step size. For convenience the lines with slopes equal to three (dashed), four (dash-dotted), and five (dotted) are shown.}
    \label{fig:loglog1}
    \end{figure}

\subsection{Comparative performance}
The results of our numerical experiments are presented and analyzed to address the comparative performance of the different exponential-Krylov implementations and the new stiffly accurate EPIRK schemes themselves.  Our comparisons are based on the analysis of precision diagrams for the following simulations:
\begin{itemize}
	\item ADR: $N=400^2$ with $h=0.01, 0.005, 0.0025,0.00125,6.25e-4$,
	\item Allen-Cahn: $N=500^2$ with $h=0.5, 0.25, 0.1250, 0.0625, 0.03125$,
	\item SemilinearParabolic: $N=1000$ with $h=0.1, 0.05, 0.0250, 0.0125, 0.00625$,
	\item Gray-Scott: $N=400^2$ with $h=0.01, 0.005, 0.0025,0.00125,6.25e-4$,
	\item Brusselator: $N=300^2$ with $h=0.5, 0.25, 0.1250, 0.0625, 0.03125$,
\end{itemize}
where $N$ and $h$ correspond to the spatial discretization and time-step sizes respectively.  The precision diagrams are given in Figure \ref{fig:prec}.  Previously published performance comparisons such as \cite{lofftokEPIcompare} addressed computational issue characteristic of the EPIRK methods in general such as, for example, the C-shape of the precision graphs which is induced by the computational complexity scaling of the Krylov algorithm with respect to the size of the time step. Here we concentrate on the numerical experiments demonstrating the properties of the stiffly accurate and optimized with respect of a particular implementation EPIRK methods.

Overall, the figures verify that the performance of each method highly depends on the number of Krylov evaluations and the size of the interval $[0,g]$ (which depends on the chosen $g$-coefficients) that the adaptive Krylov method has to traverse.  For example, consider the horizontal (dashed) and vertical (dotted) implementation of the fourth-order {\it EPIRK4s3A} (diamond).  The same number of adaptive-Krylov evaluations per time-step were taken (three) but Figure \ref{fig:prec} shows a considerable difference in the overall computational cost.  By comparing the CPU times for each time-step we can easily see how much computational savings are obtained by using the horizontal adaptive-Krylov algorithm.  Table \ref{table:costcompareHorzVert}(a) displays the maximum, minimum, and average of the cost of {\it EPIRK4s3A}-Vert compared to cost of {\it EPIRK4s3A}-Horz over all time-steps.  Considering all the test problems, the vertical implementation of {\it EPIRK4s3A} costs on average 129\% of the cost of the horizontal implementation.  Similar results are also found when comparing the fifth-order {\it EXPRB53s3}-Vert with the specifically constructed {\it EPIRK5s3}-Horz (Table \ref{table:costcompare} (b)). As we predicted, these savings come from the horizontal implementations ability to make use of $g_{ij}<1$ coefficients by reducing the Krylov basis size. While this advantage should be observed for the vertical implementation of any closely related method of the same order and same number of stages, the amount of savings will depend on the coefficients of the method.

\begin{table}[htb]
\centering
\caption{Comparison of CPU times for {\it EPIRK4s3A}-Vert with {\it EPIRK4s3A}-Horz  and {\it EXPRB53s3}-Vert with {\it EPIRK5s3}-Horz (Cost of VertMethod/Cost of HorzMethod)}
\label{table:costcompareHorzVert}
\begin{tabular}{cc}
\begin{tabular}{llll}\hline 
        &                Max       & Min       & Avg          \\ \hline
{ADR:} &    137 \%   &   114	\%    &   120 \%              \\ \hline
{Allen-Cahn:} &   127  \%   &     	104\%    & 120 \%                \\ \hline
{Brusselator:} &   133\%     &    98	 \%   &     120 \%              \\ \hline
{Gray-Scott:} &    131\%    &   116\%  	    &    126\%               \\ \hline
{Semilinear Parabolic:} &   157   \%  &   143  \%	    &  152\%                  \\ \hline
\end{tabular}
&
\begin{tabular}{llll}\hline 
        &                Max       & Min       & Avg          \\ \hline
{ADR:} &    119\%  & 79\% & 107\%             \\ \hline
{Allen-Cahn:} &   108\%  & 90\% & 102\%                \\ \hline
{Brusselator:} &   107\%  & 94\% & 101\%             \\ \hline
{Gray-Scott:} &    109\%  & 102\% & 106\%             \\ \hline
{Semilinear Parabolic:} &   138\%  & 107\% & 124\%             \\ \hline
\end{tabular}\\
(a)CPU times: {\it EPIRK4s3A}-Vert/{\it EPIRK4s3A}-Horz & (b) CPU times: {\it EXPRB53s3}-Vert / {\it EPIRK5s3}-Horz
\end{tabular}
\end{table}

The vertical and horizontal implementation of {\it EPIRK4s3A} require three Krylov evaluations each time-step.  The mixed implementation of {\it EPIRK4s3A} only requires two Krylov projections and therefore it is expected this method will further increase the savings compared to the vertically implemented {\it EPIRK4s3A}.  Our numerical experiments confirm {\it EPIRK4s3A}-Mix has a clear advantage over both its horziontal and vertical implementations and can offer up to 50\% savings (compared to its vertical implementation).  The maximum/minimum/average of the per time-step comparisons are given in Table \ref{table:costcompare} and plots of CPU execution time versus error in Figure \ref{fig:prec}.  In the case where the number of Krylov evaluations are the same (i.e. fifth-order three-stage methods), the mixed implementation can still offer computational savings over the vertical implementation but is really dependent on the $g$-coefficients. For example, the difference in performance between the mixed and vertical implementations  of {\textit EXPRB53s3} is much smaller due to $g_{31}=9/10$. For this value the resulting intervals $[0,g]$ are nearly the same and no significant savings are obtained.  We will pursue further optimization of coefficients strategies for horizontal and mixed implementations in our future research.

\begin{figure}[hbtp]
\begin{center}
   \subfigure[2D ADR $(N=400^2$)]{\includegraphics[scale = 0.4]{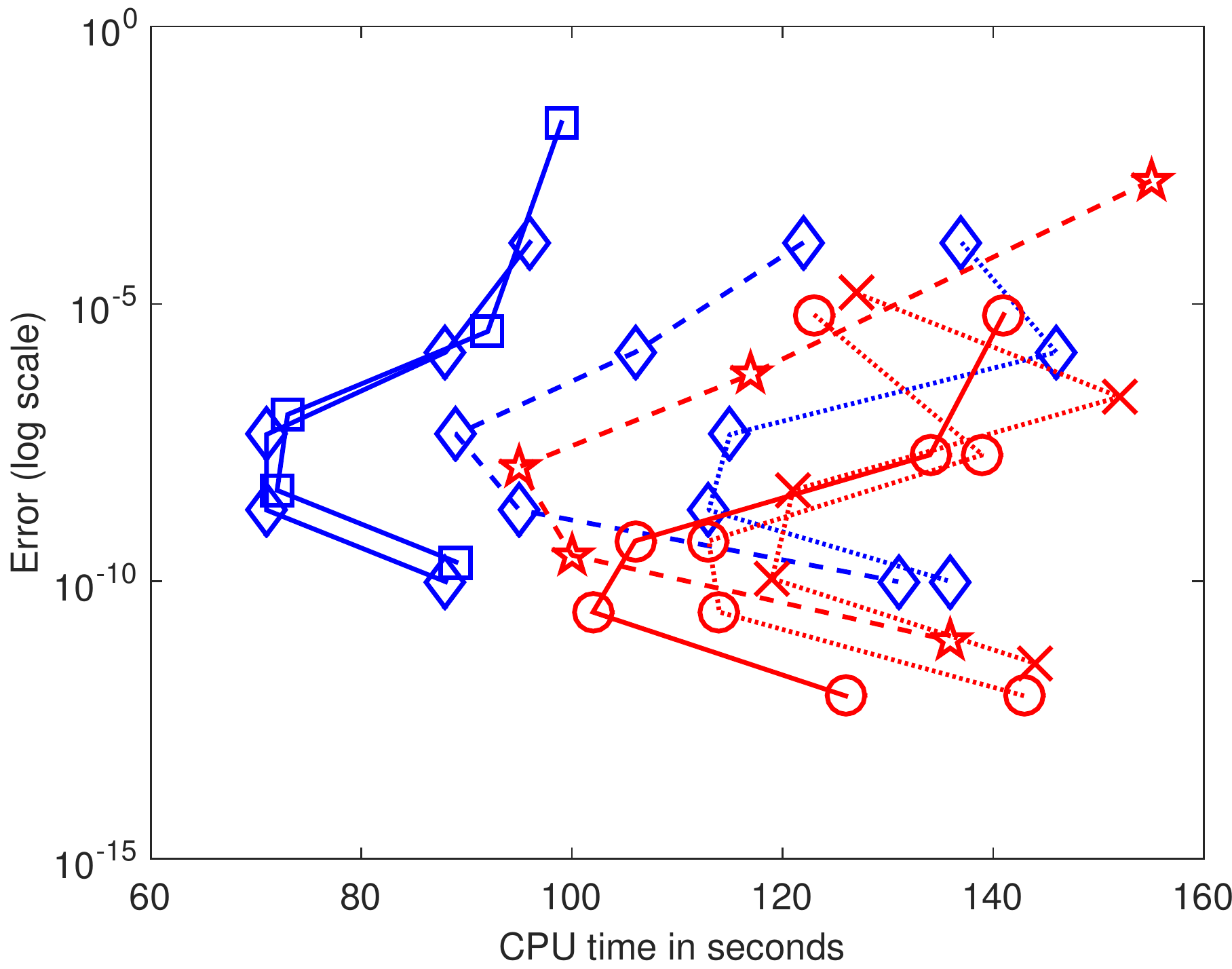}}  \hspace{10pt}
    \subfigure[2D Allen-Cahn $(N=500^2)$]{\includegraphics[scale = 0.4]{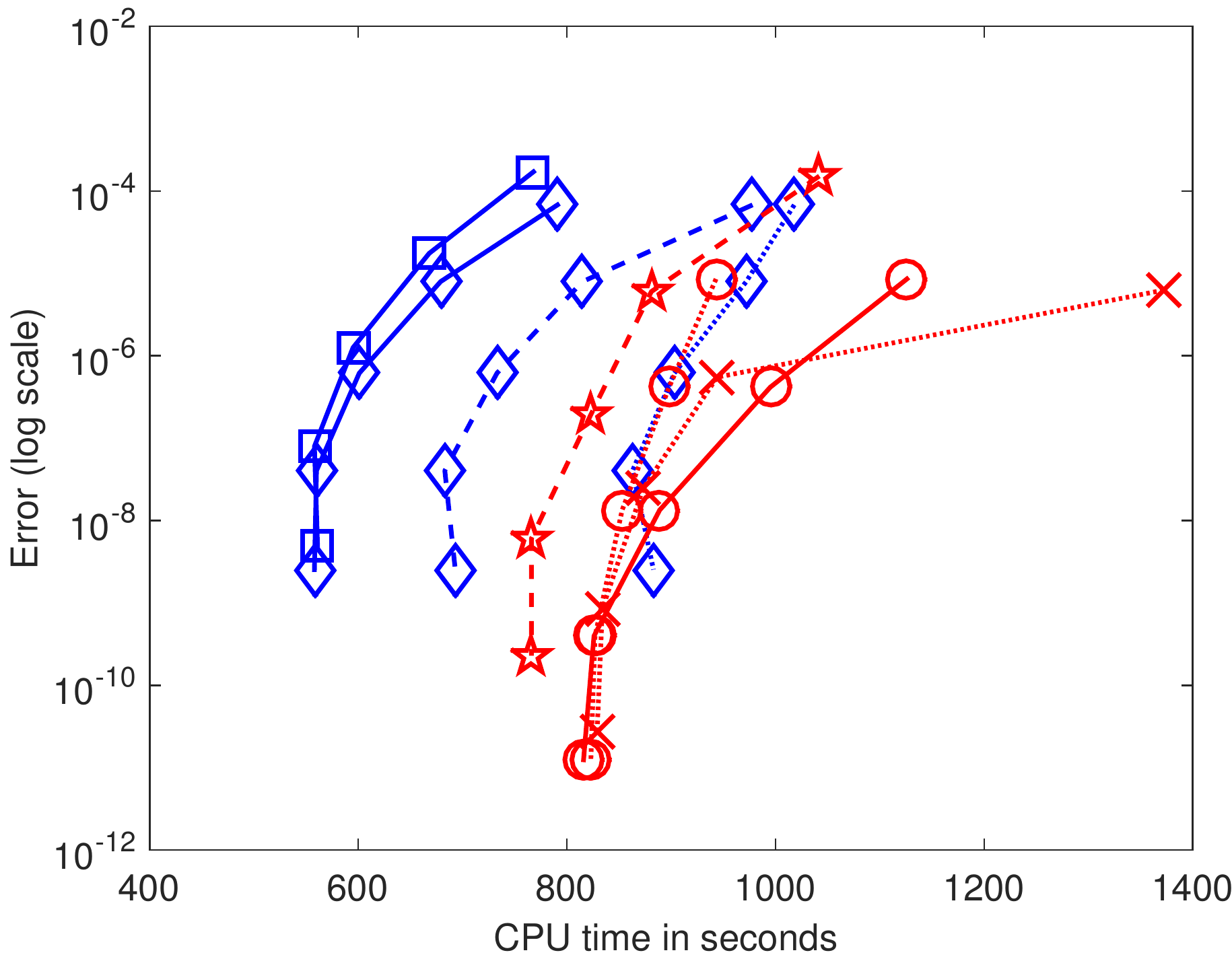}}\\
   \subfigure[1D Semilinear Parabolic N=1000]{$\begin{array}{c}\includegraphics[scale = 0.4]{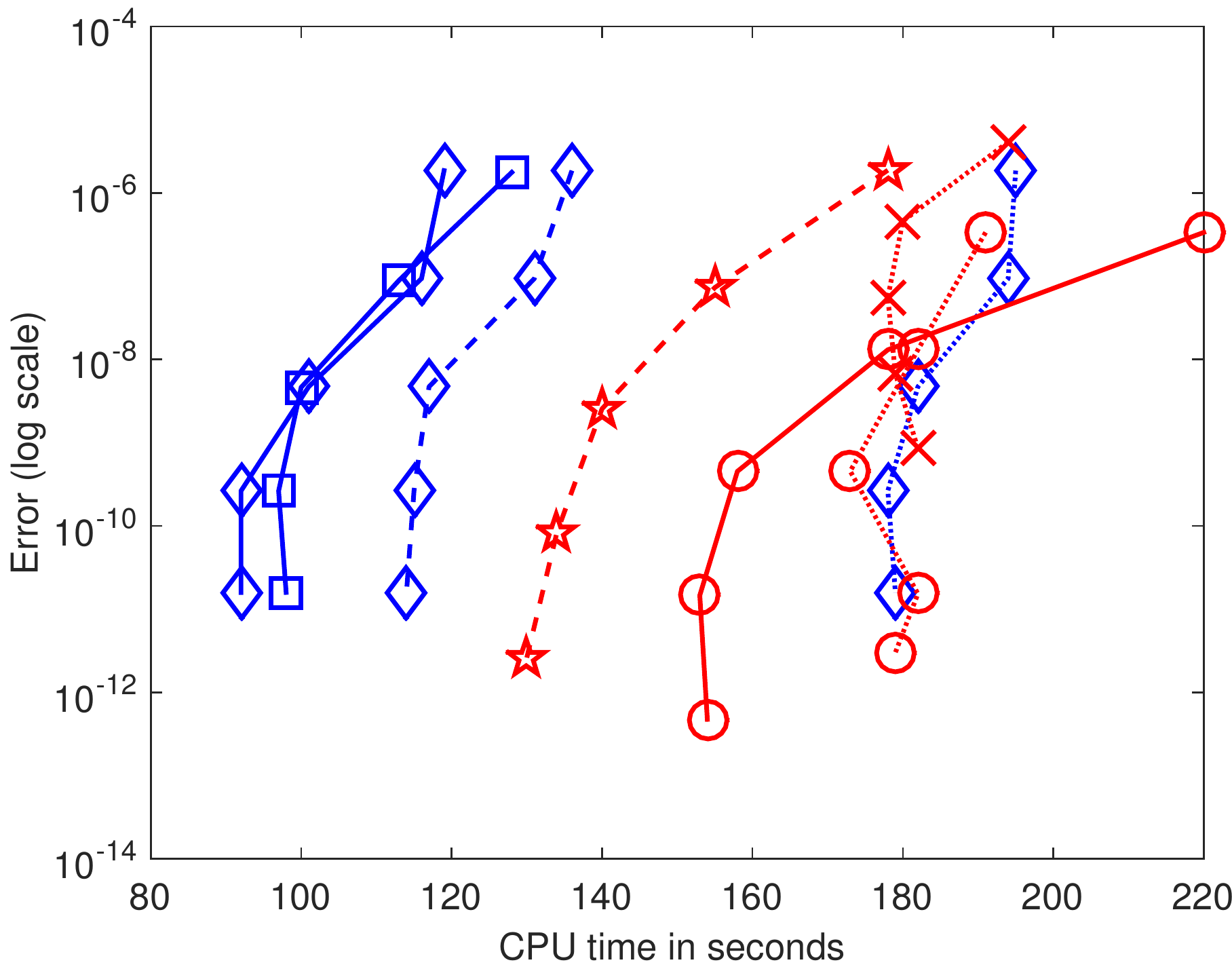}\\ \end{array}$}  \hspace{10pt} 
   \subfigure{$\begin{array}{c} {\includegraphics[scale = 0.2]{stiffpaperlegend}}\\ \textrm{ }\end{array}$ }  \\
  		\addtocounter{subfigure}{-1}
    \subfigure[2D Gray-Scott $(N=400^2)$]{\includegraphics[scale = 0.4]{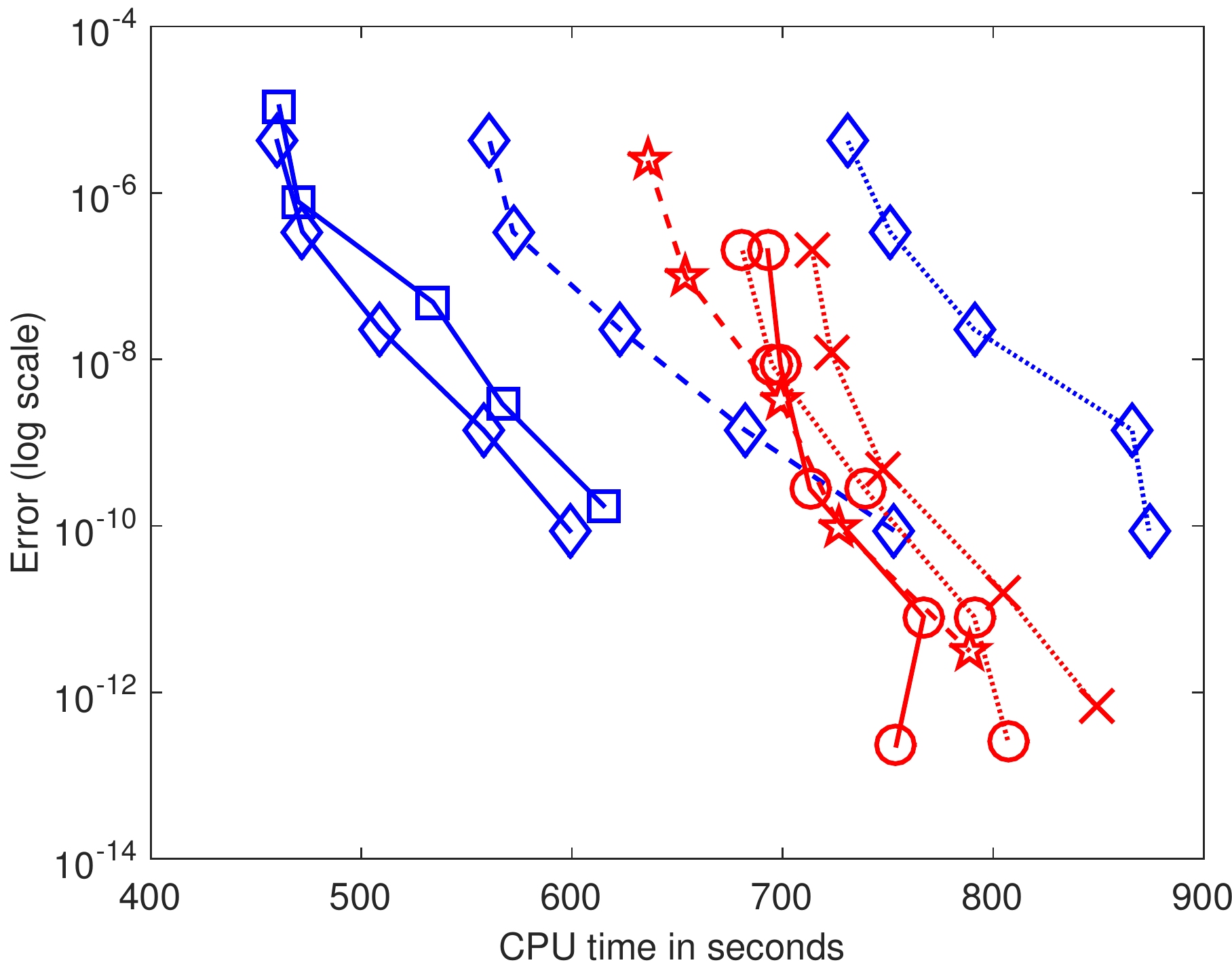}}\hspace{10pt}
    \subfigure[2D Brusselator $(N=300^2)$]{\includegraphics[scale = 0.4]{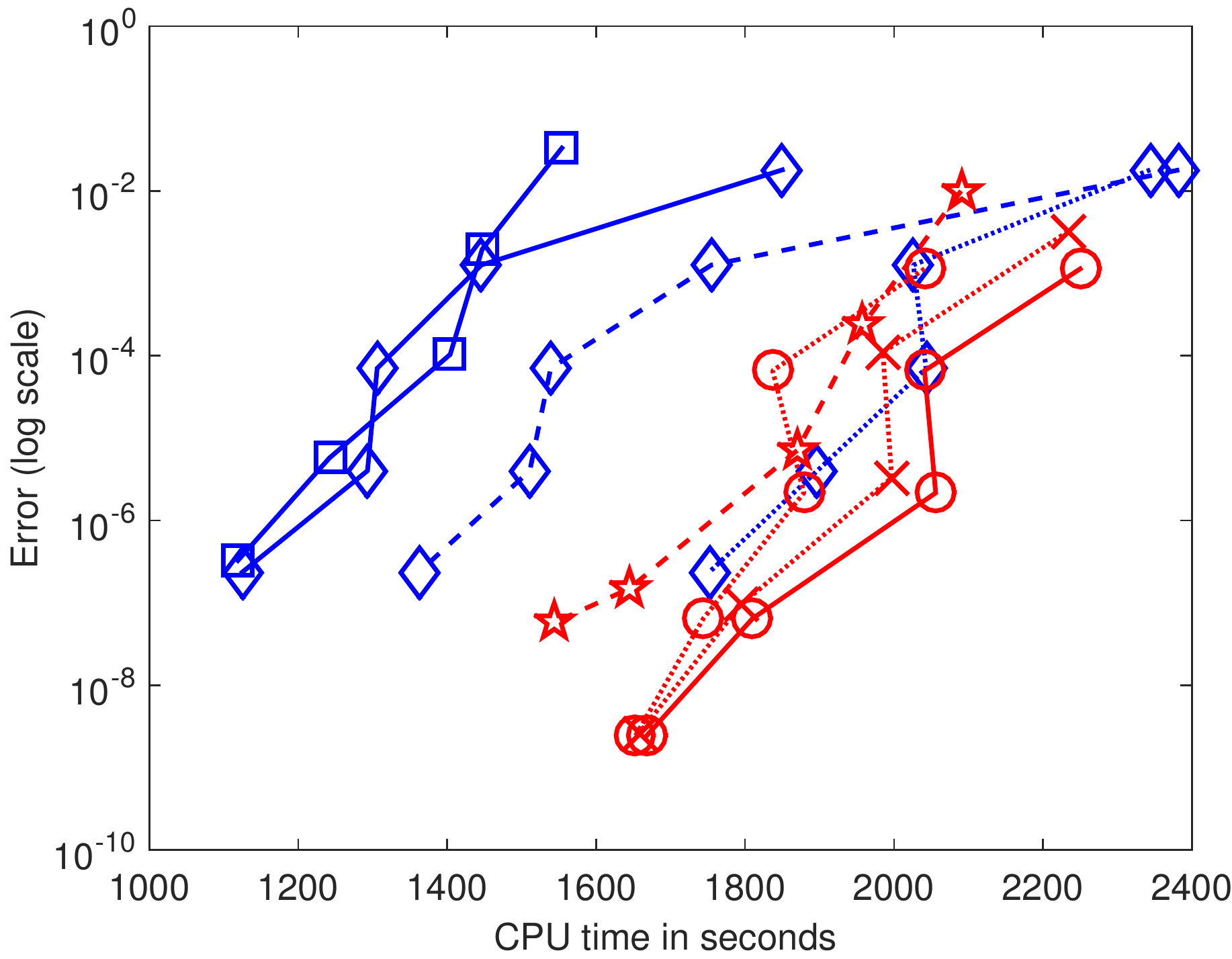}}
    \end{center}
   \caption{CPU execution time versus error for constant time step experiments}
    \label{fig:prec}
    \end{figure}

%
%

\begin{table}[htb]
\centering
\caption{Cost of horizontal and vertical implementations in comparison to mixed implementation}
\label{table:costcompare}
\begin{tabular}{lllllllll}
                  & \multicolumn{4}{c}{Comparisons to EPIRK4s3A-Mixed} &   \multicolumn{4}{c}{Comparisons to EXPRB5s3-Mixed} \\
        &                      & Max       & Min       & Avg       &  & Max              & Min              & Avg             \\ \hline
\multirow{2}{*}{ADR:}&EPIRK4s3A-Horz & 149\%  & 120\% & 131\% &  EPIRK5s3-Horz & 110\%  & 87\% & 99\% \\
 &EPIRK4s3A-Vert & 166\%  & 143\% & 157\% & EXPRB53s3-Vert & 113\%  & 87\% & 105\% \\ \hline
\multirow{2}{*}{Allen-Cahn:} &EPIRK4s3A-Horz & 124\%  & 120\% & 122\% &  EPIRK5s3-Horz & 94\%  & 89\% & 92\% \\
 &EPIRK4s3A-Vert & 158\%  & 129\% & 147\% & EXPRB53s3-Vert & 101\%  & 84\% & 94\% \\ \hline
\multirow{2}{*}{Brusselator:} &EPIRK4s3A-Horz & 129\%  & 117\% & 121\% &  EPIRK5s3-Horz & 96\%  & 91\% & 93\% \\
 &EPIRK4s3A-Vert & 156\%  & 127\% & 145\% & EXPRB53s3-Vert & 99\%  & 90\% & 94\% \\ \hline
\multirow{2}{*}{Gray-Scott:} &EPIRK4s3A-Horz & 126\%  & 121\% & 123\% &  EPIRK5s3-Horz & 105\%  & 92\% & 97\% \\
 &EPIRK4s3A-Vert & 159\%  & 146\% & 155\% & EXPRB53s3-Vert & 107\%  & 98\% & 102\% \\ \hline
\multirow{2}{*}{Semilinear Parabolic:} &EPIRK4s3A-Horz & 125\%  & 113\% & 118\% &  EPIRK5s3-Horz & 89\%  & 81\% & 86\% \\
 &EPIRK4s3A-Vert & 195\%  & 164\% & 180\% & EXPRB53s3-Vert & 119\%  & 87\% & 107\% \\ \hline                 
\end{tabular}
\end{table}

We now turn to comparing the performance of the schemes themselves. While there is no clear dominate fifth-order method in regards to computational cost we do see that {\it EXPRB53s3}  is slightly more accurate for all problems.  The more interesting comparison is that of the fourth-order method with the fifth-order schemes.  For a prescribed accuracy, the fourth-order mixed (and horizontal) {\it EPIRK4s3A} can offer significant (up to 64\%) savings in comparison to the fifth order methods (of any implementation).  In Table \ref{table:CPUtimes} we list the CPU execution times for each method and each test problem for various tolerances.  For any set tolerance we see that the mixed implementation of {\it EPIRK4s3A} can achieve this level of accuracy at a fraction of the cost of any of the fifth-order methods.   A simple justification is that conditions for a stiffly accurate fifth-order method are far more restrictive than for a fourth-order scheme.  Thus the additional flexibility of stiffly accurate fourth-order schemes allows for more customization and design of methods which optimize the efficiency.

\begin{table}[htb]
\centering
\caption{Approximate CPU times for a given accuracy}
\label{table:CPUtimes}
{\small \begin{tabular}{lllll}
\hline
    Accuracy        & \multicolumn{4}{c}{Approx. CPU times (seconds)}                          \\
  & {\it EPIRK4s3A}-mixed & {\it EPIRK5s3}-horz & {\it EXPRB53s3}-vert & {\it EXPRB53s3}-mixed \\ \hline
$10^{-6}$  &     616        &          851         &  911  &   1034         \\
$10^{-7}$  &     575           &    811                &   879     &   950    \\
$10^{-8}$ &      558         &      775         &   849     &  885    \\
\hline\\
\end{tabular}\\

(a) Allen-Cahn $N=500^2$}\\

{\small \begin{tabular}{lllll}
\\
\hline
 Accuracy          & \multicolumn{4}{c}{Approx. CPU times (seconds)}                          \\
  & {\it EPIRK4s3A}-mixed & {\it EPIRK5s3}-horz & {\it EXPRB53s3}-vert & {\it EXPRB53s3}-mixed \\ \hline
$10^{-7}$  &          74   &   107               &134    &  136         \\
$10^{-8}$  &           71  &     95              &   134      & 128       \\
$10^{-9}$ &           75  &       98         &      117    &     110   \\
\hline\\
\end{tabular}\\

(b) ADR $N=400^2$}\\
{\small \begin{tabular}{lllll}
\\
\hline
 Accuracy          & \multicolumn{4}{c}{Approx. CPU times (seconds)}                          \\
  & {\it EPIRK4s3A}-mixed & {\it EPIRK5s3}-horz & {\it EXPRB53s3}-vert & {\it EXPRB53s3}-mixed \\ \hline
$10^{-7}$  &  116            &      156          &  187  &  204     \\
$10^{-8}$  &     104        &      145            &  181      &   176   \\
$10^{-9}$ &       96     &         138         &    175    &   162     \\
\hline\\
\end{tabular}\\
(c) Semilinear Parabolic $N=1000$}

{\small \begin{tabular}{lllll}
\\
\hline
 Accuracy          & \multicolumn{4}{c}{Approx. CPU times (seconds)}                          \\
  & {\it EPIRK4s3A}-mixed & {\it EPIRK5s3}-horz & {\it EXPRB53s3}-vert & {\it EXPRB53s3}-mixed \\ \hline
$10^{-7}$  &     488         &      655          &  683  & 695      \\
$10^{-8}$  &       523      &    684              &    694    & 698     \\
$10^{-9}$ &         562   &   708               &     722   &      708  \\
\hline\\
\end{tabular}\\
(d) Gray-Scott $N=400^2$}

{\small \begin{tabular}{lllll}
\\
\hline
 Accuracy          & \multicolumn{4}{c}{Approx. CPU times (seconds)}                          \\
  & {\it EPIRK4s3A}-mixed & {\it EPIRK5s3}-horz & {\it EXPRB53s3}-vert & {\it EXPRB53s3}-mixed \\ \hline
$10^{-4}$  &      1321        &    1937            & 1861   & 2076      \\
$10^{-5}$  &     1295        &      1877            & 1859       &   2050   \\
$10^{-6}$ &    1204        &     1754             &   1847     &   1999     \\
\hline\\
\end{tabular}\\
(d) Brusselator $N=300^2$}
\end{table}

\subsubsection{Non-homogeneous boundary conditions}
As mentioned in Section \ref{subsec:analyticFramework} problems with non-homogeneous boundary conditions do not necessarily satisfy the assumptions of our framework and therefore the stiff order is not guaranteed. The purpose of this section is to show that order reduction occurs and identify how much of a reduction to expect for these problems.  We perform simulations with the following test problems: 
\begin{itemize}
	\item Allen-Cahn 2d: Neumann boundary conditions with initial and boundary values given by $$u=0.4+0.1(x+y)+0.1\sin\left(\frac{3}{2}\pi x\right)\sin\left(\frac{5}{2}\pi y\right). $$
    
	\item Brusselator 2d: Dirichlet boundary conditions with initial and boundary values given by
	$$\begin{aligned}
&	u = 1 + \sin(2\pi x)\sin(2\pi y)\\
&	v=3
	\end{aligned} $$
		\item 1D Degenerate nonlinear diffusion  \cite{sherratt}: 
	$$ \frac{\partial u}{\partial t}=\frac{\partial }{\partial x}\left[u\frac{\partial u}{\partial x}\right] +u(1-u), \qquad x\in (-23,50), \, t\in [0,50]$$
  with Dirichlet boundary conditions $u(-23,t)=1$ and $u(50,t)=0$, and  initial conditions $$u(x,0)=\left\{\begin{array}{ll} 
	1 & \textrm{ if } x<0\\
	e^{-1.3x} & \textrm{ if } x>0  . 
\end{array}	\right. $$
\end{itemize}
The same spatial discretization and time-step sizes were used for the Brusselator problem as in the previous section.  The Allen-Cahn and degenerate nonlinear diffusion problem were conducted with time-step sizes $h=0.05,0.0250,0.0125,0.00625,0.003125$  and respective discretization sizes of $N=500^2$ and $N=1000$.

Figure \ref{fig:loglogNonHom} displays the log-log plots of time-step size versus error and Table \ref{table:ApproxOrder} has the approximate order exhibited by each of the method for every test problem.  While some of the methods achieve full order for some problems, generally the results illustrate that a reduction of order is possible even if the method is stiffly accurate.  The extent of the order reduction ranges from 0.03 to 1.34. Such reduction is expected since a similar phenomenon occurs for implicit methods.  A theory presented in \cite{OsterRoche1, OsterRoche2} allows to quantify the extent of order reduction for Rosenbrock methods.  We plan to pursue development of a similar theory for exponential integrators applied to nonhomogeneous problems in our future research.

  \begin{figure}[hbtp]
    \begin{center}$
    \begin{array}{c}
    \subfigure[2D Brusselator $(N=300^2)$]{\includegraphics[scale = 0.4]{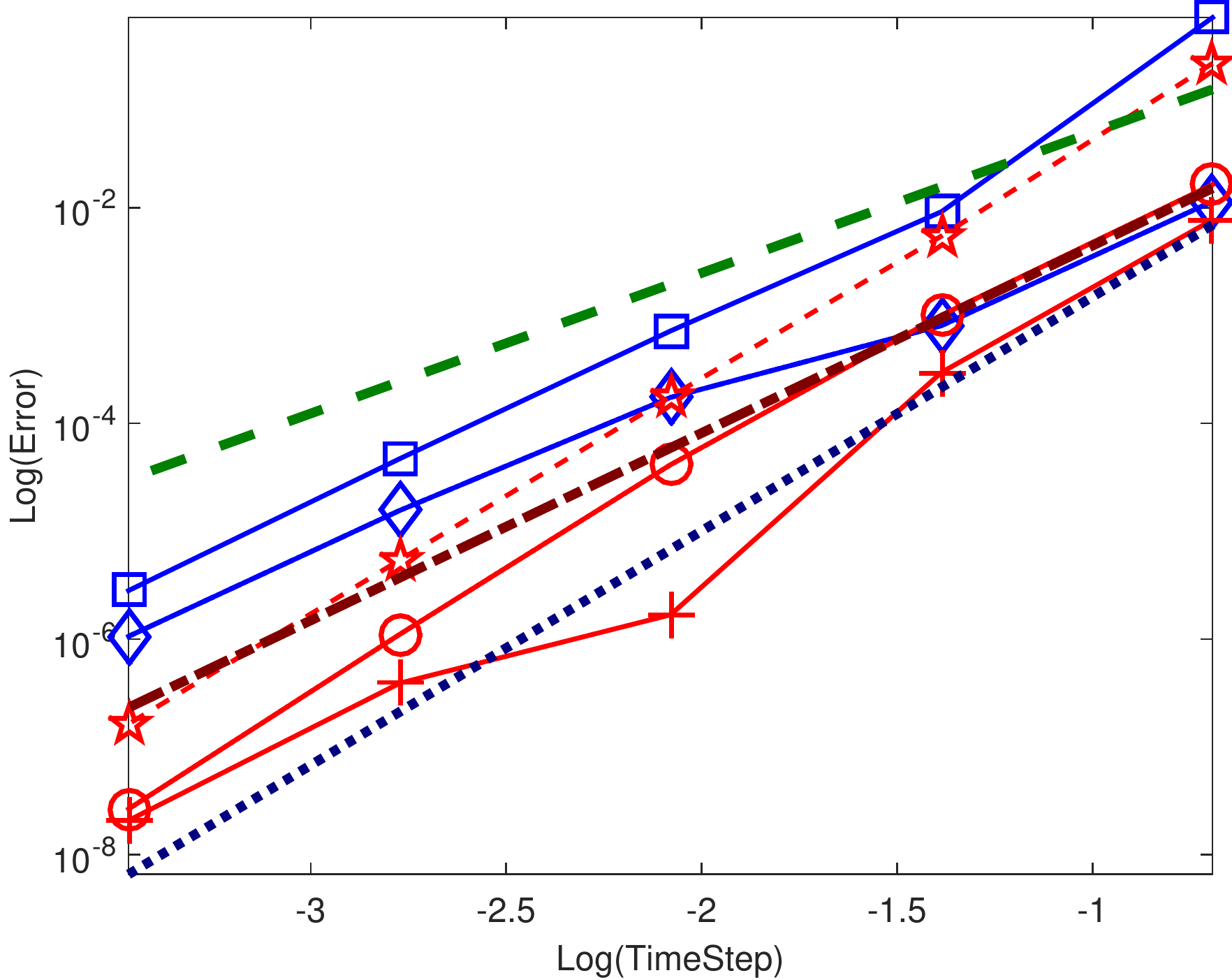}}\hspace{10pt} 
   {\includegraphics[scale = 0.2]{stiffpaperlegend}\vspace*{40pt}}\\
      \subfigure[1D Degenerate Nonlinear Diffusion ($N=1000$)]{\includegraphics[scale = 0.4]{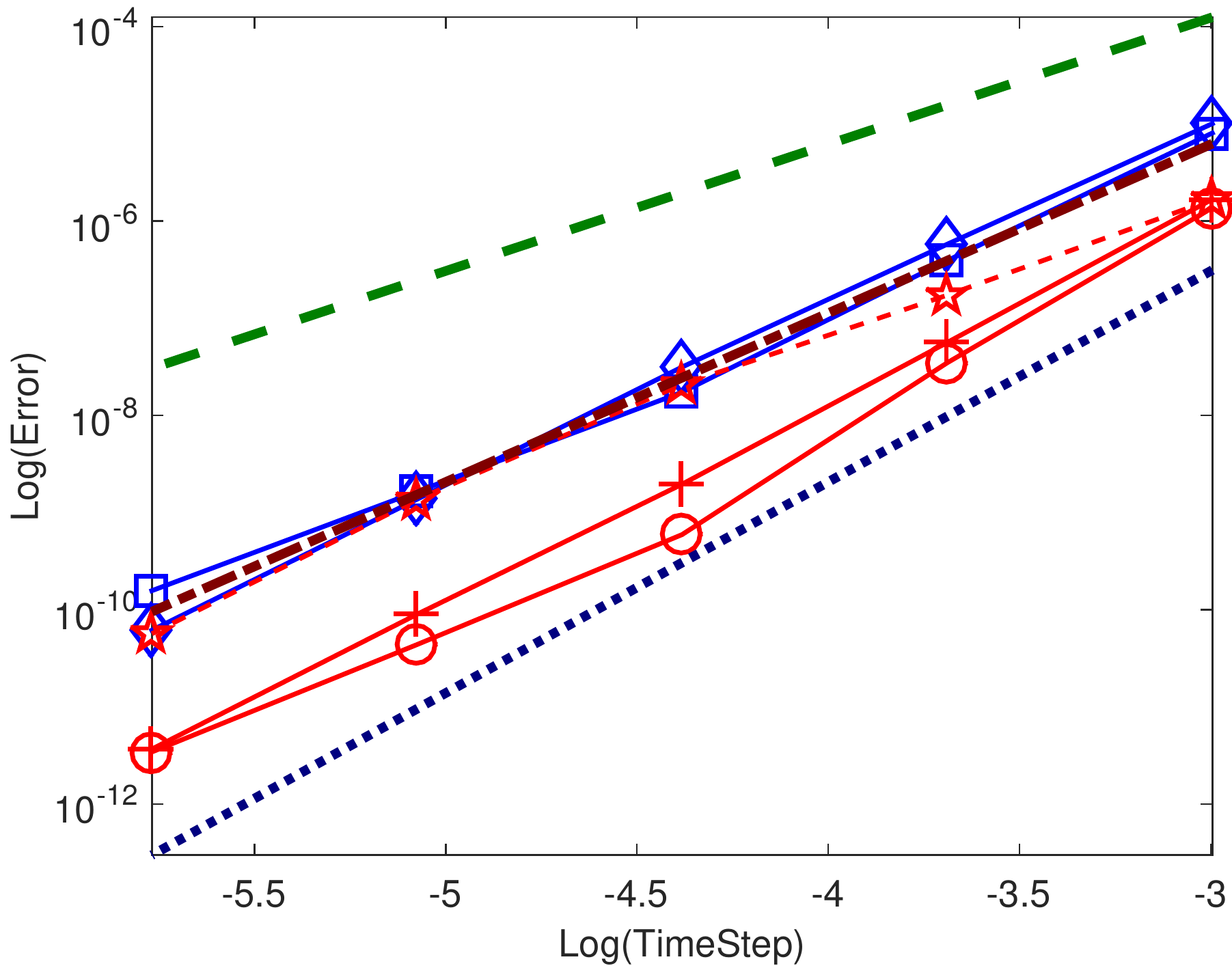}}  \hspace{10pt}
        \subfigure[2D Allen-Cahn $(N=500^2)$]{\includegraphics[scale = 0.4]{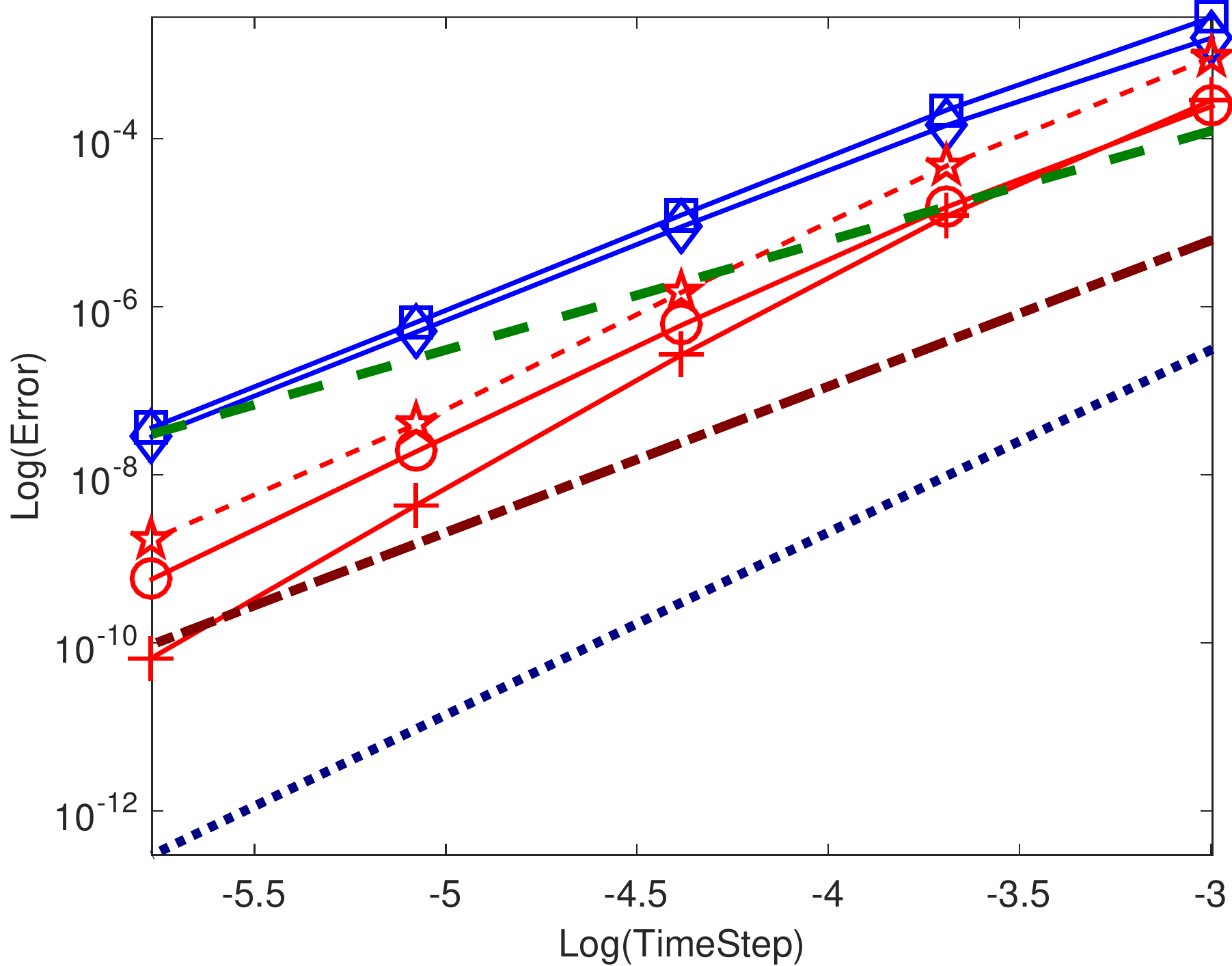}}\\
          
    \end{array}$
    \end{center}
    \caption{ Log-log plots of error vs. time step size for problems with non-homogeneous boundary conditions. For convenience the lines with slopes equal to three (dashed), four (dash-dotted), and five (dotted) are shown.}
    \label{fig:loglogNonHom}
    \end{figure}

\begin{table}[]
\centering
\caption{Approximate order of methods for problems with non-homogeneous boundary conditions}
\begin{tabular}{lccccc}
            & \textit{EPIRK4s3A} & \textit{EPIRK4s3B} & \textit{EPIRK5s3} & \textit{EXPRB53s3} & \textit{EPIRK5-P1} \\ \hline
Allen-Cahn:  & 3.97              & 4.08              & 4.84              & 4.71            & 5.56\\
Brusselator: & 3.25             & 4.29                     & 5.06              & 4.83              & 4.66               \\
Deg NL Diff:         & 4.34              & 3.92              &     3.66        & 4.68                & 4.69             
\end{tabular}
\label{table:ApproxOrder}
\end{table}

\subsection{Variable time-step comparisons}

We present here the results of our variable time-step experiments on tests problems described in Section \ref{subsec:testproblems}.  In addition to the stiffly accurate schemes from Section \ref{subsec:variabletimestep} we will also consider the fifth-order classical (non-stiff)  {\it EPIRK5-P1} method with a fourth-order error estimator \cite{lofftokEPIcompare}.  We used the same configuration for our experiments as in \cite{lofftokEPIcompare}.  For each problem, five runs were made with the following absolute and relative tolerances $Atol=Rtol=10^{-2},10^{-3},10^{-4},10^{-5},10^{-6}$. The resulting diagrams of CPU execution time versus error are displayed in Figure \ref{fig:precVar}.  The classically derived {\it EPIRK5-P1} shows to be the most efficient method across all the problems but has the potential drawback of suffering from a reduction of order as seen with the semilinear parabolic problem. This further confirms the need for more efficient stiffly accurate methods as well as illustrates the need for a more refined theory that predicts how much order reduction can be expected for a given problem and a chosen integrator.

    
\begin{figure}[hbtp]
\begin{center}
   \subfigure[2D ADR $(N=400^2$)]{\includegraphics[scale = 0.4]{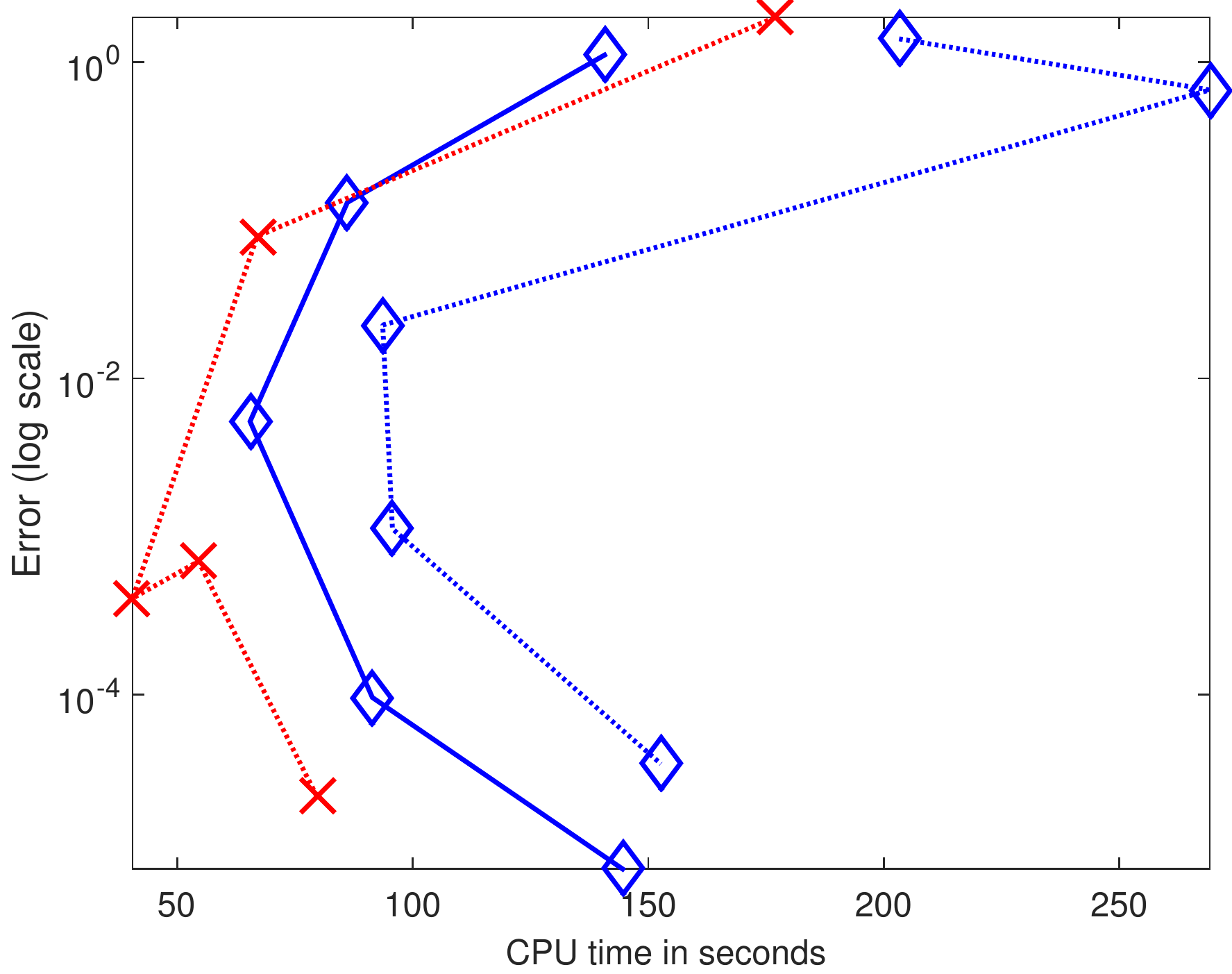}}  \hspace{10pt}
    \subfigure[2D Allen-Cahn $(N=500^2)$]{\includegraphics[scale = 0.4]{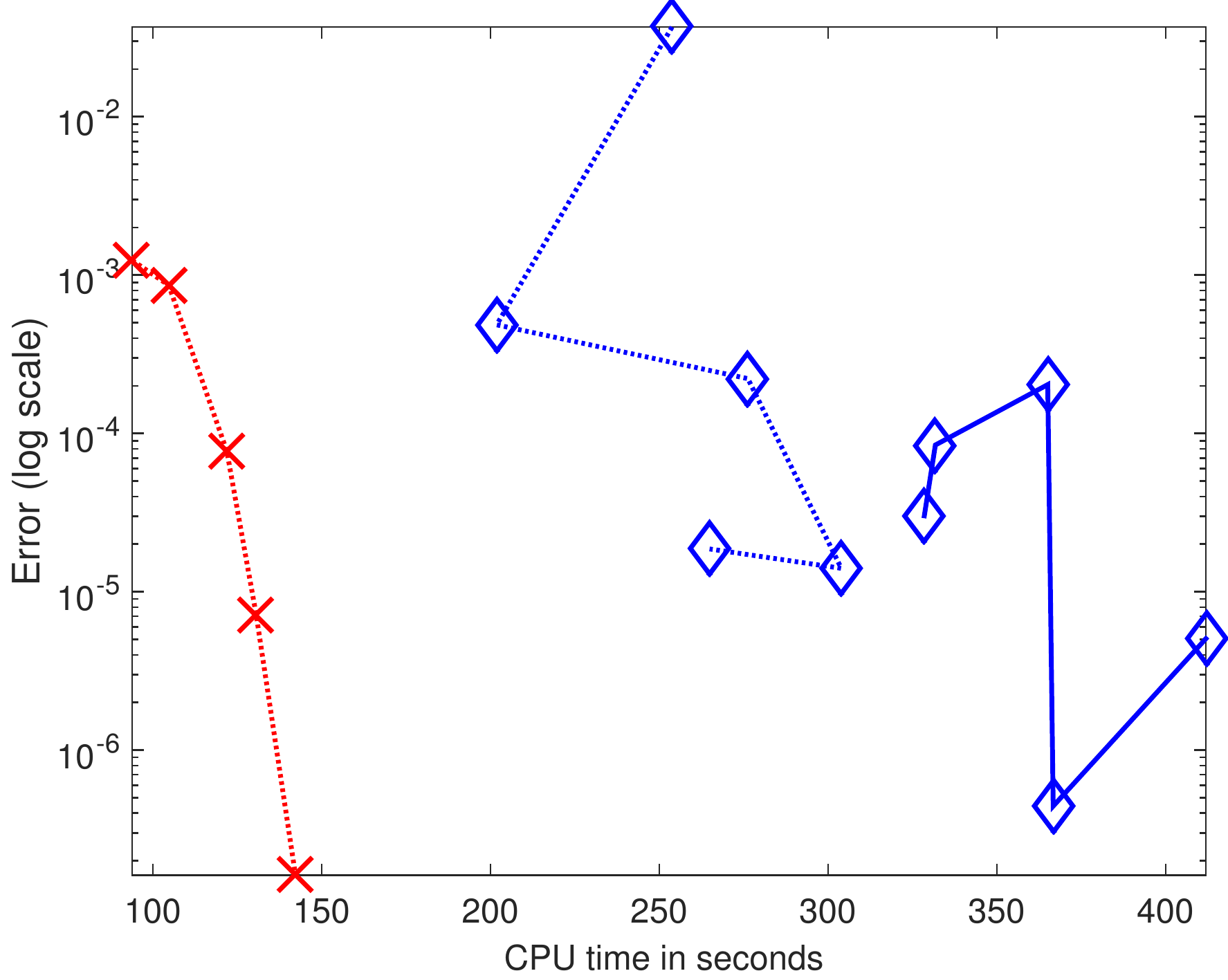}}\\
   \subfigure[1D Semilinear Parabolic N=1000]{$\begin{array}{c}\includegraphics[scale = 0.4]{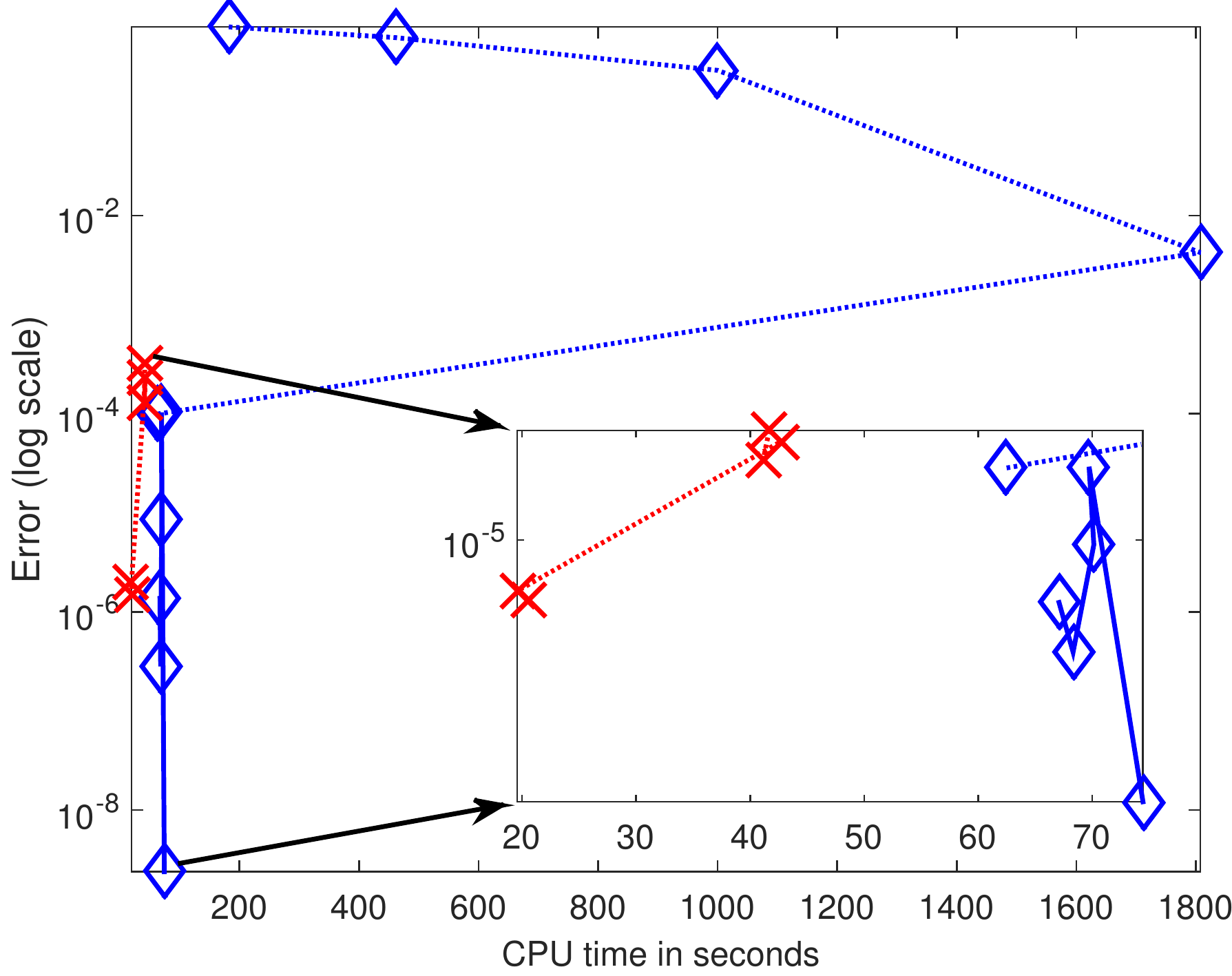}\\ \end{array}$}  \hspace{10pt} 
   \subfigure{$\begin{array}{c} {\includegraphics[scale = 0.2]{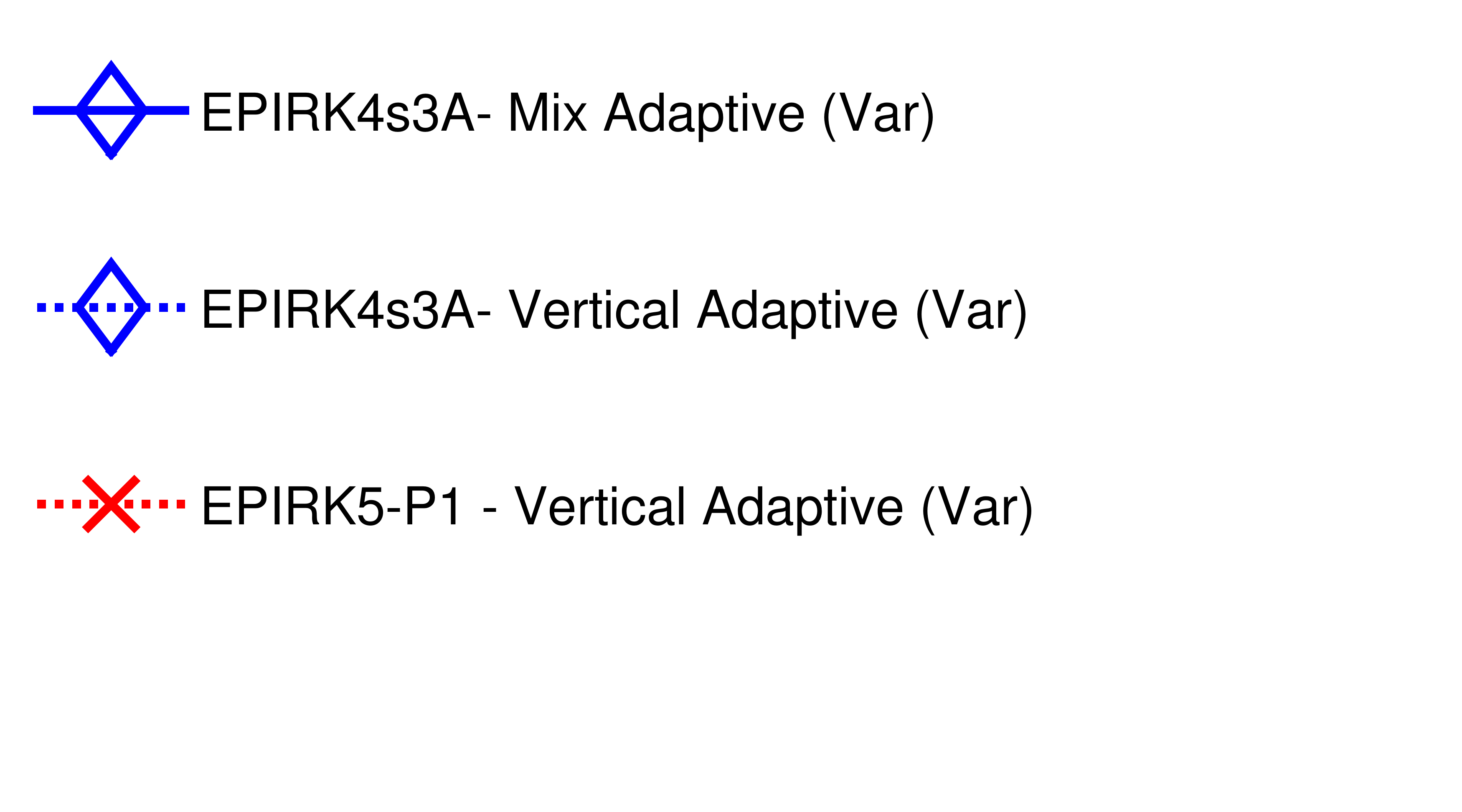}}\\ \textrm{ }\end{array}$}  \\
  		\addtocounter{subfigure}{-1}
    \subfigure[2D Gray-Scott $(N=400^2)$]{\includegraphics[scale = 0.4]{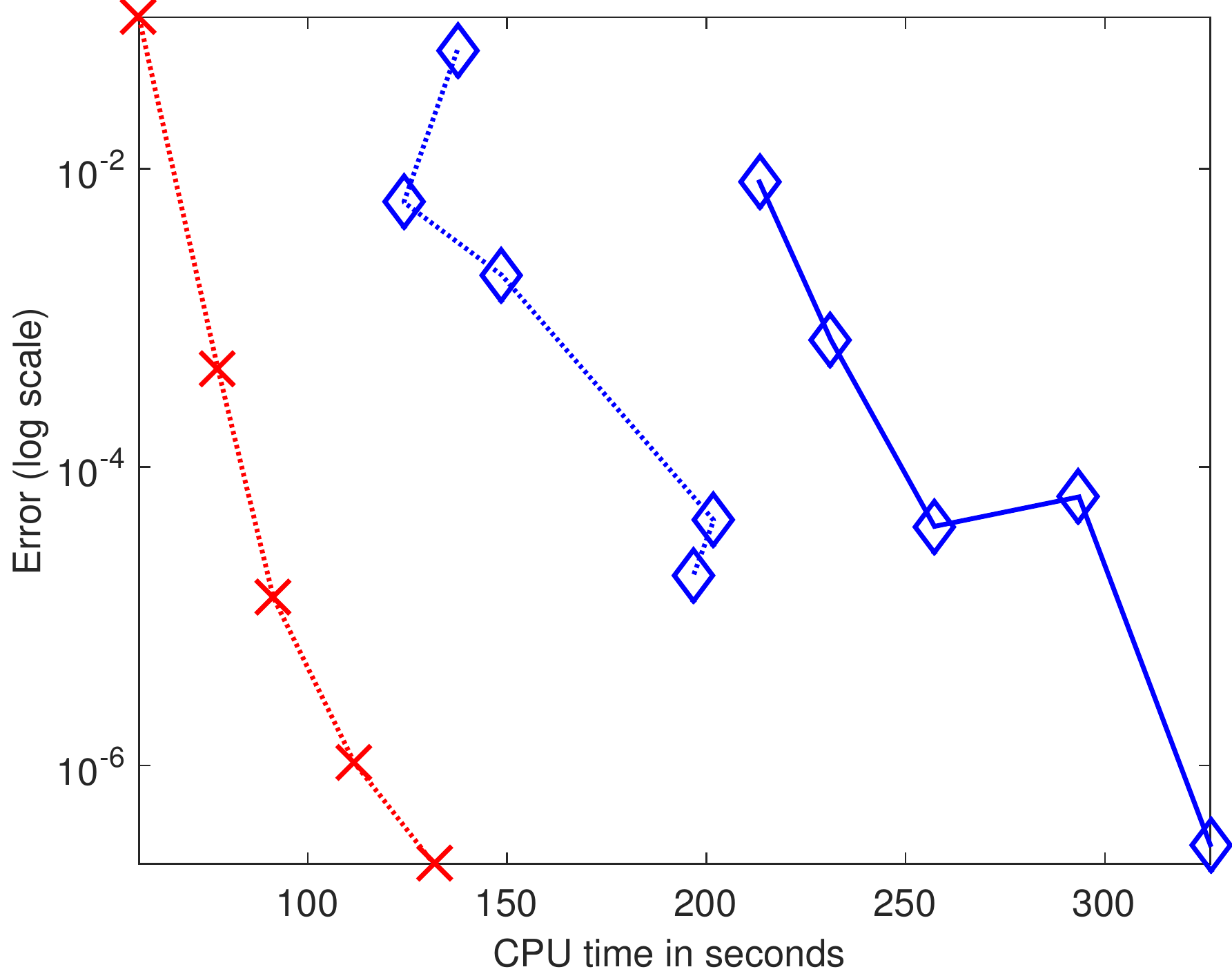}}\hspace{10pt}
    \subfigure[2D Brusselator $(N=300^2)$]{\includegraphics[scale = 0.4]{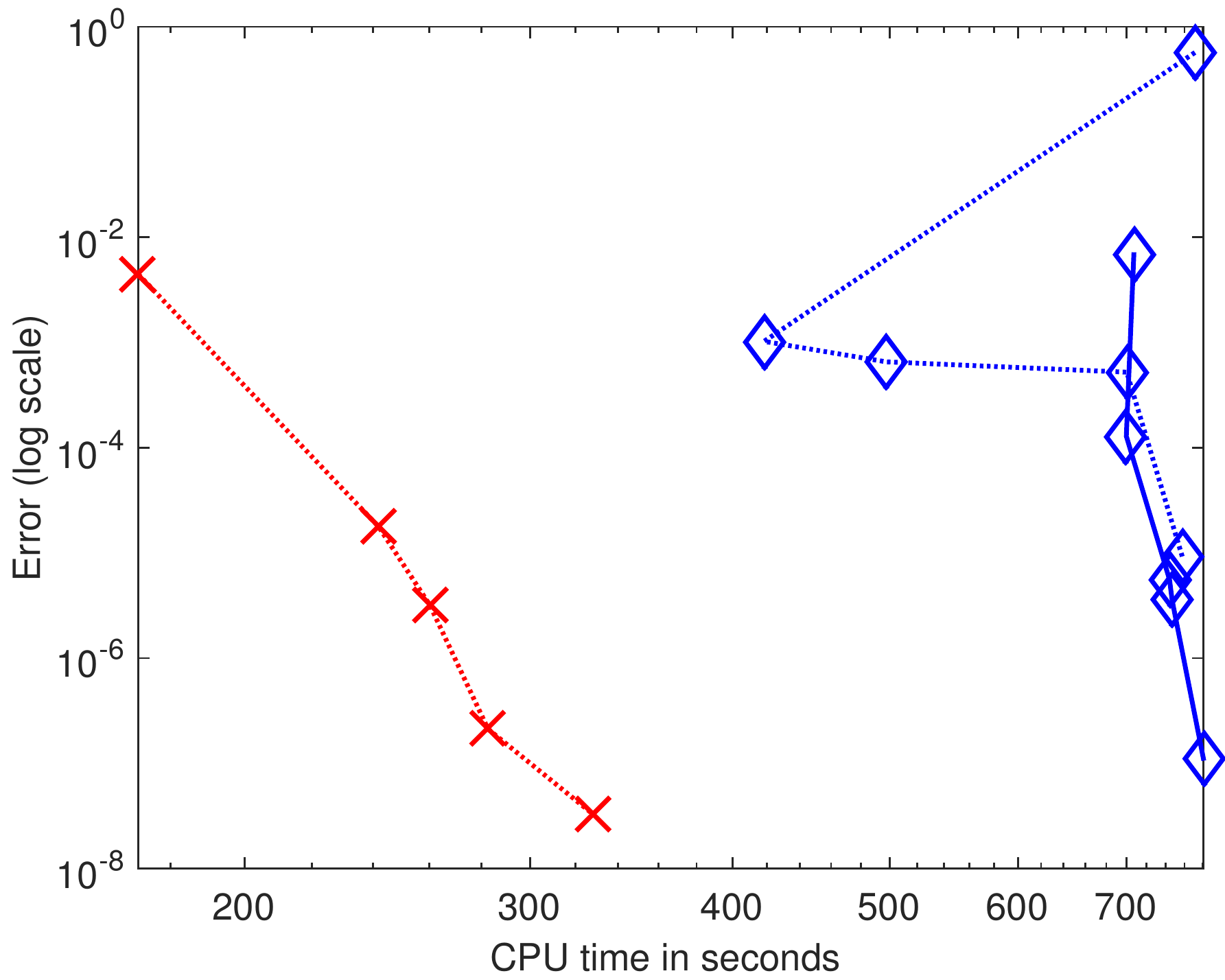}}\\
    \end{center}
    \caption{CPU execution time versus error for variable time step experiments}
    \label{fig:precVar}
    \end{figure}
    
 \section{Conclusions and future work}
We have extended the stiff order conditions and convergence theory in \cite{luan_oster} for EXPRB methods to EPIRK-type methods. We offered a different approach to solving the stiff order conditions that allows construction of efficient schemes of several types particularly when these methods are used in conjunction with the adaptive Krylov algorithm.  Using the generality of the EPRIK framework we constructed new stiffly accurate fourth and fifth-order schemes and numerically confirmed they achieved their full predicted order of accuracy on a set of test problems. Our numerical experiments further showed that the new technique of deriving horizontal or mixed EPIRK schemes does offer improved computational savings compared to previously derived (EPIRK \& EXPRB) methods.  For a given exponential method, however, the most efficient implementation will depend on its coefficients and the structure of the problem under consideration.  We are currently working on a modified adaptive Krylov algorithm that provides more computational savings for horizontal and mixed optimized EPIRK schemes. Development of better guidelines in constructing/choosing the most efficient integrator for a given problem is a goal of our future investigations.  We also plan to extend/develop the stiff order conditions theory for partitioned (or split) EPIRK, implicit-exponential type-methods. Finally, more research is needed to investigate whether stiffly accurate exponential integrators that do not suffer from order reduction can be developed for problems with non-homogeneous boundary conditions.

\section*{Acknowledgments}
This work was supported by a grant from the National Science Foundation, Computational Mathematics Program, under Grant No. 1115978.  The authors express gratitude to Dr. Vu Thai Luan and the referees for carefully reading the manuscript and providing helpful suggestions for improvements. 

\appendix
\section{}\label{sec:appendix}

In this appendix we present the details of the necessary modifications to the theory in \cite{luan_oster} to prove convergence of the stiffly accurate EPIRK methods.  The convergence proof in \cite{luan_oster} proceeds by expressing the global error $e_{n+1}=u_{n+1}-u(t_{n+1})=u_{n+1}-\tilde{u}_{n+1}$ in the following form 
%
%
%
%
\begin{equation}
\label{eqn:globalErrorExpression1}
e_{n+1}=e^{h_n\Jt}e_n + h_nP_n+\tilde{e}_{n+1},\quad e_0=0,
\end{equation} 
where
\begin{equation}
\label{eqn:Pn}
P_{n}=q_n+\sum_{i=2}^sQ_{ni}
\end{equation}
with
\begin{eqnarray}
q_{n}&=& \varphi_{1}(h_nJ_n)(N_n(u_n)-N_n(\tilde{u}_n))+(\varphi_1(h_nJ_n)-\varphi_1(h_n\Jt))f(\ut),\label{eqn:qn}\\
Q_{ni}&=&(b_{i}(h_nJ_n)-b_{i}(h_n\Jt))\widehat{r}_{ni}+b_{i}(h_nJ_n)(r(U_{ni})-\widehat{r}_{ni}).\label{eqn:Qn}
\end{eqnarray}
Lemmas 4.1 through 4.5 in \cite{luan_oster} provide bounds to the different terms in this expression.  All of these lemmas are directly applicable to the EPIRK methods except for Lemma 4.5.  Here we present a modified proof of Lemma 4.5 that accounts for the fact that EPIRK methods employ the general $\psi$-function rather than the $\varphi_1$-function as in the exponential Rosenbrock methods.   To motivate the lemma we begin by applying Lemma 4.4 in \cite{luan_oster} to \eqref{eqn:Pn} and obtain the preliminary estimate 
\begin{equation}
\label{eqn:PnPrelimBound}
\norm{P_n}\leq Ch_n \norm{e_n}+C\norm{e_n}^2 + \sum_{i=2}^s C\norm{e_n}+C\norm{e_n}^2+C\left(h_n+\norm{e_n}+\norm{\widehat{E}_{ni}}\right)\norm{\widehat{E}_{ni}},
\end{equation}
where $\widehat{E}_{ni}=U_{ni}-\wh{U}_{ni}$ is the difference between the  numerical solutions obtained from (\ref{eqn:EPIRK}) and (\ref{eqn:EPIRKalongExact}). Our desired estimate for $P_n$ is obtained by bounding $\norm{\widehat{E}_{ni}}$ in terms of $\norm{e_n}$. The bound found in \cite{luan_oster} for EXPRB methods only holds for methods whose internal stages strictly use $\varphi_1$-function in the first term.  With the additional assumption that the method satisfies Assumption 3, we prove the same bound holds for any linear combination of $\varphi$-functions.
  

\begin{lem} \label{lemma:EstimatePn} Under Assumptions 1-3, for all $i$, we have 
\begin{eqnarray}
&& \norm{\widehat{E}_{ni}}\leq C\norm{e_n}+Ch_n\norm{e_n}^2+Ch_n^5\label{eqn:EstimateEhat}\\
&& \norm{P_n}\leq C\norm{e_n}+C\norm{e_n}^2+Ch_n^6\label{eqn:EstimatePn}
\end{eqnarray}
as long as the global errors $e_n$ remain in a sufficiently small neighborhood of $0$ and $h_n \leq C_H$.
\end{lem}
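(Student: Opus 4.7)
The plan is to establish (\ref{eqn:EstimateEhat}) by induction on the stage index $i$, then to feed the resulting estimate into the preliminary bound (\ref{eqn:PnPrelimBound}), together with the bound on $q_n$ from Lemma 4.4 of \cite{luan_oster}, in order to obtain (\ref{eqn:EstimatePn}). Subtracting (\ref{eqn:EPIRKalongExact}) from (\ref{eqn:EPIRK}) gives
\[
\widehat{E}_{ni}=e_n+\alpha_{i1}\bigl[\psi_{i1}(g_{i1}h_n J_n)h_n f(u_n)-\psi_{i1}(g_{i1}h_n\Jt)h_n f(\ut)\bigr]+h_n\sum_{j=2}^{i-1}\bigl[a_{ij}(h_n J_n)r(U_{nj})-a_{ij}(h_n\Jt)\widehat{r}_{nj}\bigr],
\]
so each bracketed difference must be controlled by the right-hand side of (\ref{eqn:EstimateEhat}).

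First I split the $\psi_{i1}$ bracket as $\psi_{i1}(g_{i1}h_n J_n)[f(u_n)-f(\ut)]+[\psi_{i1}(g_{i1}h_n J_n)-\psi_{i1}(g_{i1}h_n\Jt)]f(\ut)$. For the operator-difference piece, the Duhamel identity $e^{tJ_n}-e^{t\Jt}=\int_0^t e^{(t-s)J_n}(J_n-\Jt)e^{s\Jt}\,ds$ combined with the integral representation of $\varphi_k$ and the Lipschitz bound (\ref{eqn:lipchitz_jac}) yields $\|\varphi_k(g_{i1}h_n J_n)-\varphi_k(g_{i1}h_n\Jt)\|\leq Ch_n\|e_n\|$, so with the outer $h_n$ this piece is $\O(h_n^2\|e_n\|)$. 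For the source-difference piece, the first-order Taylor expansion $f(u_n)-f(\ut)=\Jt e_n+\O(\|e_n\|^2)$ reduces matters to bounding $\alpha_{i1}h_n\psi_{i1}(g_{i1}h_n J_n)\Jt e_n$. Writing $\Jt=J_n+(\Jt-J_n)$ absorbs the second summand into $\O(\|e_n\|^2)$ via (\ref{eqn:lipchitz_jac}), and the recurrence $\varphi_k(z)z=\varphi_{k-1}(z)-1/(k-1)!$ rewrites the remaining $\alpha_{i1}h_n\psi_{i1}(g_{i1}h_n J_n)J_n e_n$ as the bounded combination $(\alpha_{i1}/g_{i1})\sum_k p_{i1k}[\varphi_{k-1}(g_{i1}h_n J_n)-1/(k-1)!]e_n$.

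The main obstacle is precisely this last step: in the exponential Rosenbrock setting $\psi_{i1}=\varphi_1$, so the collapse is immediate and the argument of \cite{luan_oster} closes with a clean $C\|e_n\|$ bound, whereas in the EPIRK form the linear combination over $k$ must still reduce to an operator whose norm remains uniformly bounded in $h_n$ with a constant independent of the coefficient multi-index, so that the global-error recursion later closes without accumulating spurious factors. Assumption 3 is precisely the structural condition that enforces this collapse: under each of the three identities $\alpha_{i1}p_{i1k}=g_{i1}$, $\alpha_{i1}=g_{i1}$, or $p_{i1k}=g_{i1}$, the scaling between $\alpha_{i1}$, $g_{i1}$, and the $p_{i1k}$ factors combines so that the $k$-sum mirrors the scalar $\varphi_1$ estimate of \cite{luan_oster}, yielding a bound of the form $C\|e_n\|$ for the source-difference piece.

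For the inner-stage sum $\sum_{j<i}$ a parallel splitting applies: $\|a_{ij}(h_n J_n)-a_{ij}(h_n\Jt)\|\leq Ch_n\|e_n\|$ by the same Duhamel argument, while the identity $\partial\nt/\partial u(\ut)=0$ from (\ref{eqn:identities_of_g_tilde}) together with the inductive hypothesis on $\widehat{E}_{nj}$ gives $\|r(U_{nj})-\widehat{r}_{nj}\|\leq C(\|e_n\|+\|\widehat{E}_{nj}\|)(h_n+\|e_n\|+\|\widehat{E}_{nj}\|)$. Combined with $\|r(U_{nj})\|,\|\widehat{r}_{nj}\|=\O(h_n^2)$, the outer $h_n$ prefactor, and standard power counting, these contributions feed back into the induction on $i$ to close (\ref{eqn:EstimateEhat}) and produce the $\O(h_n^5)$ remainder. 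Substituting the resulting bound into (\ref{eqn:PnPrelimBound}) along with the $q_n$ estimate from Lemma 4.4 of \cite{luan_oster} then delivers (\ref{eqn:EstimatePn}).
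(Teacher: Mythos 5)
Your proposal is correct and follows essentially the same route as the paper's Appendix proof: subtract \eqref{eqn:EPIRKalongExact} from \eqref{eqn:EPIRK}, split the $\psi_{i1}$ term into a source difference plus an operator difference, use the recurrence \eqref{eqn:phiRecurrenceRelation} together with Assumption 3 to collapse the unbounded factor $h_nJ_n\varphi_k(g_{i1}h_nJ_n)$ into bounded $\varphi_{k-1}$ operators, bound the $a_{ij}$ and $r$-differences via the Lipschitz estimates (Lemmas 4.1--4.4 of \cite{luan_oster}), and close \eqref{eqn:EstimateEhat} by induction over the stages before substituting into \eqref{eqn:PnPrelimBound}. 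The only differences are cosmetic: you Taylor-expand $f(u_n)-f(\ut)$ and re-derive the operator-difference bound via Duhamel, where the paper uses the exact identity $f(u_n)-f(\ut)=J_ne_n+N_n(u_n)-N_n(\ut)$ and works without loss of generality in the case $p_{i1k}=g_{i1}$ of Assumption 3.
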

\begin{proof}
Without loss of generality and for sake of presentation, we will assume the method satisfies $p_{i1k}=g_{i1}$ of Assumption 3 for each $i$ and all $k$.
We begin by proving the estimate
\begin{equation}
\label{eqn:Ehat1}
\norm{\widehat{E}_{ni}}\leq C\norm{e_n}+Ch_n\norm{e_n}^2+h_n\sum_{j=2}^{i-1} C(h_n+\norm{e_n}+\norm{\widehat{E}_{nj}})\norm{\widehat{E}_{nj}}.
\end{equation}
By adding and subtracting $\alpha_{i1}h_n p_{i1k}\varphi_k(g_{i1}h_nJ_n)f(\ut)$ for each $k=1,\dots,s$ to $\widehat{E}_{ni}$ we can then write $\widehat{E}_{ni}$ as 
\begin{equation}\label{eqn:EhatEqn1}
\begin{aligned}
\widehat{E}_{ni}&= e_n +\alpha_{i1}h_n\sum_{k=1}^{s}p_{i1k}\varphi_k(g_{i1}h_nJ_n)\left(f(u_n)-f(\tilde{u}_n)\right)+\alpha_{i1}h_n\sum_{k=1}^{s}p_{i1k}\left(\varphi_k(g_{i1}h_nJ_n)-\varphi_k(g_{i1}h_n\Jt)\right)f(\ut)+\\
&\qquad + h_n\sum_{j=2}^{i-1}a_{ij}(h_nJ_n)(r_{nj}-\hat{r}_{nj})+h_n\sum_{j=2}^{i-1}(a_{ij}(h_nJ_n)-a_{ij}(h_n \Jt))\hat{r}_{nj}.
\end{aligned}
\end{equation}
Using the identity $f(u)-f(\ut)=J_ne_n + N_n(u_n)-g_n(\ut)$, \eqref{eqn:CoeffRestrictionConvergence} and recurrence relation (\ref{eqn:phiRecurrenceRelation}), \eqref{eqn:EhatEqn1} can be expressed as
\begin{equation}
\hspace*{-15pt}\begin{aligned}
\widehat{E}_{ni}
&=e_n + \alpha_{i1}\sum_{k=1}^{s}\left[h_ng_{i1}\varphi_k(g_{i1}h_nJ_n)J_ne_n+h_ng_{i1}\varphi_k(g_{i1}h_nJ_n)(N_n(u_n)-N_n(\ut))\right]+\\
&\qquad+\alpha_{i1}g_{i1}h_n\sum_{k=1}^s(\varphi_k(g_{i1}h_nJ_n)-\varphi_k(g_{i1}h_n\Jt))f(\ut) + h_n\sum_{j=2}^{i-1}a_{ij}(h_nJ_n)(r_{nj}-\hat{r}_{nj})+h_n\sum_{j=2}^{i-1}(a_{ij}(h_nJ_n)-a_{ij}(h_n \Jt))\hat{r}_{nj}\\
&= e_n + \alpha_{i1}\sum_{k=1}^s\left[\left(\varphi_{k-1}(g_{i1}h_n J_n)-1/k!\right)e_n+\alpha_{i1}g_{i1}h_n\varphi_k (g_{i1}h_nJ_n)(N_n(u_n)-N_n(\ut))\right]+\;\;\;\\
&\qquad + \alpha_{i1}g_{i1}h_n\sum_{k=1}^s(\varphi_k(g_{i1}h_nJ_n)-\varphi_k(g_{i1}h_n\Jt))f(\ut) + h_n\sum_{j=2}^{i-1}a_{ij}(h_nJ_n)(r_{nj}-\hat{r}_{nj})+h_n\sum_{j=2}^{i-1}a_{ij}(h_nJ_n)-a_{ij}(h_n \Jt))\hat{r}_{nj}
\end{aligned}.
\end{equation}
The estimate \eqref{eqn:Ehat1} then follows from the positive-scalability and sub-additivity of the norm, the estimates of Lemmas 4.1 \& 4.3 in \cite{luan_oster}, boundedness of $f(\ut)=\tilde{u}_n'$ and $\varphi_k(h_nJ)$ (and $a_{ij}(h_nJ_n)$).
Now we can prove \eqref{eqn:EstimateEhat}. Since $e_n$ is assumed to remain in a sufficiently small neighborhood of $0$, there exists $0<\delta <1$ such that $\norm{e_n}<\delta$ for all $n$.  This implies that for each $n$, $\norm{e_n}^2\leq \norm{e_n}$ and furthermore shows that 
\begin{equation}
\label{eqn:EstimateEn2}
\norm{\widehat{E}_{n2}}\leq C\norm{e_n}+Ch_n\norm{e_n}^2\leq C\norm{e_n}+Ch_n\norm{e_n}
\end{equation}
by using \eqref{eqn:Ehat1} with $i=2$.  Assuming $\norm{\widehat{E}_{n i-1}}\leq C_1\norm{e_n}+C_2h_n\norm{e_n}^2$  we obtain
\begin{eqnarray}
\norm{\widehat{E}_{ni}} &\leq &C \norm{e_n}+C h_n \norm{e_n}^2
+h_n\sum_{j=2}^{i-1} C(h_n+\norm{e_n}+\norm{\hat{E}_{nj}})\norm{\hat{E}_{nj}}\\
& \leq &C \norm{e_n}+C h_n \norm{e_n}^2+h_n\sum_{j=2}^{i-1} C(h_n+\norm{e_n}+ (C_1 \norm{e_n}+C_2 h_n \norm{e_n}^2))( C_1 \norm{e_n}+C_2 h_n \norm{e_n}^2).\nonumber
\end{eqnarray}
By expanding the terms and using the assumption that $\norm{e_n}<\delta$ we arrive at 
\begin{eqnarray}
\norm{\widehat{E}_{ni}} &\leq & (C+h_n^2C_1)\norm{e_n}+(C +C_2h_n^2+C_1+C_1^2+C_2h_n+2C_2C_1h_n^2+C_2^2h_n^2)h_n\norm{e_n}^2\\
& \leq& (C+C_H^2C_1)\norm{e_n}+(C +C_2C_M^2+C_1+C_1^2+C_2C_M+2C_2C_1C_M^2+C_2^2C_M^2)h_n\norm{e_n}^2\\
&\leq& \mathbf{C} \norm{e_n}+\mathbf{C}h_n\norm{e_n}^2
\end{eqnarray}
where $C_M=\max(C_H,1)$ and $\mathbf{C}=\max((C+C_H^2C_1),(C +C_2C_M^2+C_1+C_1^2+C_2C_M+2C_2C_1C_M^2+C_2^2C_M^2),1)$.
The estimate (\ref{eqn:EstimatePn}) now follows from (\ref{eqn:PnPrelimBound}) and (\ref{eqn:EstimateEhat}).
\end{proof}

\bibliographystyle{amsplain}
\providecommand{\bysame}{\leavevmode\hbox to3em{\hrulefill}\thinspace}
\providecommand{\MR}{\relax\ifhmode\unskip\space\fi MR }
\providecommand{\MRhref}[2]{%
  \href{http://www.ams.org/mathscinet-getitem?mr=#1}{#2}
}
\providecommand{\href}[2]{#2}

\end{document}